\numberwithin{equation}{section}
\newtheorem{theorem}{Theorem}[section]
\newtheorem{lemma}[theorem]{Lemma}
\newtheorem{proposition}[theorem]{Proposition}
\newtheorem{corollary}[theorem]{Corollary}
\newtheorem{remark}[theorem]{Remark}
\pgfplotsset{compat=newest}
\newlength\figureheight
\newlength\figurewidth
\pgfplotsset{
tick label style={font=\scriptsize},
label style={font=\footnotesize},
legend style={font=\footnotesize},
every axis plot/.append style={very thick}
}
\renewcommand{\hat}{\widehat}
\newcommand{{\paa}[1]}{p_{00,#1}}
\newcommand{{\pab}[1]}{p_{01,#1}}
\newcommand{{\pba}[1]}{p_{10,#1}}
\newcommand{{\pbb}[1]}{p_{11,#1}}
\DeclareMathOperator*{\argmin}{arg\,min}
\DeclareMathOperator*{\argmax}{arg\,max}
\newcommand{\eee}{\mathrm{e}}
\newcommand{\ddd}{\mathrm{d}}
\renewcommand{\fnum@figure}[1]{\textbf{\figurename~\thefigure}. }
\renewcommand{\fnum@table}[1]{\textbf{\tablename~\thetable}. }
\begin{document}

\title[Sample-path large deviations for graphons]{A sample-path large deviation principle\\ 
for dynamic Erd\H{o}s-R\'enyi random graphs}

\author{Peter Braunsteins}
\address{Korteweg-de-Vries Instituut, Universiteit van Amsterdam, PO Box 94248, 1090 GE Amsterdam, The Netherlands}
\email{pbraunsteins@gmail.com}

\author{Frank den Hollander}
\address{Mathematisch Instituut, Universiteit Leiden, PO Box 9512, 2300 RA Leiden, The Netherlands}
\email{denholla@math.leidenuniv.nl}

\author{Michel Mandjes}
\address{Korteweg-de Vries Instituut, Universiteit van Amsterdam, PO Box 94248, 1090 GE Amsterdam, The Netherlands}
\email{M.H.R.Mandjes@uva.nl}

\date{\today}

\begin{abstract}
We consider a dynamic Erd\H{o}s-R\'enyi random graph (ERRG) on $n$ vertices in which each edge switches on at rate $\lambda$ and switches off at rate $\mu$, independently of other edges. The focus is on the analysis of the evolution of the associated empirical graphon in the limit as $n\to\infty$. Our main result is a large deviation principle (LDP) for the sample path of the empirical graphon observed until a fixed time horizon. The rate is $\binom{n}{2}$, the rate function is a specific action integral on the space of graphon trajectories. We apply the LDP to identify (i)~the most likely path that starting from a constant graphon creates a graphon with an atypically large density of $d$-regular subgraphs, and (ii)~the mostly likely path between two given graphons. It turns out that bifurcations may occur in the solutions of associated variational problems.  

\vspace{0.5cm}
\noindent

\noindent
\emph{Key words.}
Dynamic random graphs, graphon dynamics, sample-path large deviations, optimal path.\\
\emph{MSC2010.}
05C80, 
60C05, 
60F10. 
\\
\emph{Acknowledgment.}
The work in this paper was supported by the Netherlands Organisation for Scientific Research (NWO) through Gravitation-grant NETWORKS-024.002.003.

\end{abstract}

\maketitle

\newpage

\parindent=0.5cm


\section{Introduction and main results}
\label{S1}

Section~\ref{S1.1} provides motivation and background, Section~\ref{S1.2} introduces graphs and graphons, Section~\ref{S1.3} recalls the LDP for the inhomogeneous ERRG, Section~\ref{S1.4} defines a switching dynamics for the ERRG, Section~\ref{S1.5} states the sample-path LDP for the latter, while Section~\ref{S1.6} offers a brief discussion and announces two applications.
 

\subsection{Motivation and background}
\label{S1.1}

Graphons arise as limits of dense graphs, i.e., graphs in which the number of edges is of the order of the square of the number of vertices. The theory of graphons -- developed in \cite{LSa}, \cite{LSb}, \cite{BCLSVa}, \cite{BCLSVb} -- aims to capture the limiting behaviour of large dense graphs in terms of their subgraph densities (see \cite{L} for an overview). Both typical and atypical behaviour of random graphs and their associated graphons have been analysed, including LDPs for homogeneous and inhomogeneous Erd\H{o}s-R\'enyi random graphs  \cite{CV}, \cite{DS}. 

Most of the theory focusses on \emph{static} random graphons, although recently some attempts have been made to include \emph{dynamic} random graphons \cite{R}, \cite{Ca}, \cite{Cb}, \cite{CK}, \cite{AHR}. The goal of the present paper is to generalise the LDP in \cite{CV} to a sample-path LDP for a dynamic random graph in which the edges switch on and off in a random fashion. The equilibrium of the dynamics coincides with the setup of \cite{CV}, so that our sample-path LDP is a true dynamic version of the static LDP derived in \cite{CV}. The corresponding large deviation rate function turns out to be an action integral. We consider two applications that look at optimal paths for graphons that realise a prescribed large deviation. We find that bifurcations may occur in the solutions of the associated variational problems.     


\subsection{Graphs and graphons}
\label{S1.2}

There is a natural way to embed a simple graph on $n$ vertices in a space of functions called \emph{graphons}. Let $\mathscr{W}$ be the space of functions $h\colon\,[0,1]^2 \to [0,1]$ such that $h(x,y)=h(y,x)$ for all $(x,y) \in [0,1]^2$, formed after taking the quotient with respect to the equivalence relation of almost everywhere equality. A finite simple graph $G$ on $n$ vertices can be represented as a graphon $h^G \in \mathscr{W}$ by setting
\begin{equation}
h^G(x,y) := \begin{cases}
1 &\quad \text{if there is an edge between vertex }\lceil nx \rceil\text{ and vertex }\lceil ny \rceil, \\
0 &\quad \text{otherwise.}
\end{cases} 
\end{equation}
This object is referred to as an \emph{empirical graphon} and has a block structure (see Figure~\ref{Graphon}). The space of graphons $\mathscr{W}$ is endowed with the \emph{cut distance}
\begin{equation}
\label{cutdist}
d_\square(h_1, h_2) := \sup_{S,T \subseteq [0,1]} \left| \int_{S \times T} \ddd x\, \ddd y\, 
[h_1(x,y)- h_2(x,y)] \right|, \quad h_1,h_2 \in \mathscr{W}.
\end{equation}
The space $(\mathscr{W}, d_\square)$ is not compact.

\begin{figure}[htbp]
\includegraphics[scale=0.75]{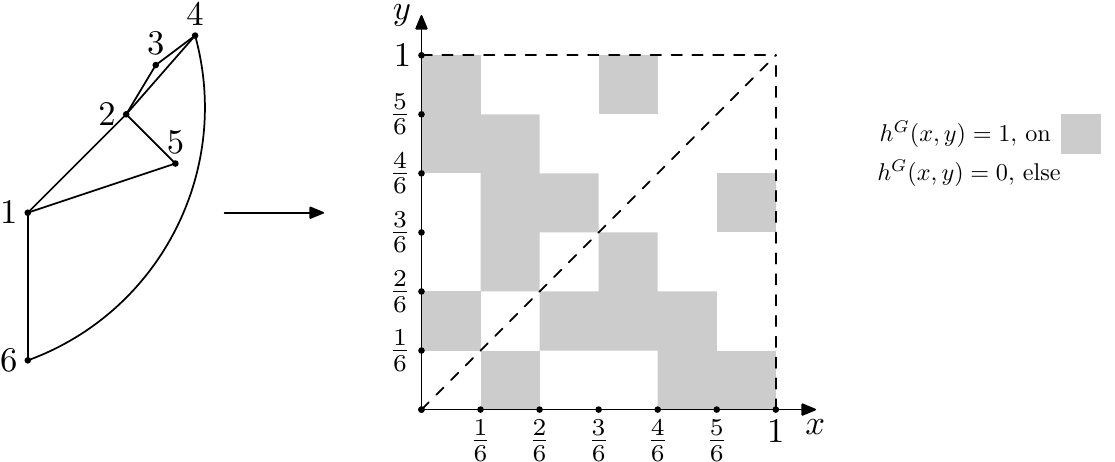}
\caption{An example of an empirical graphon.}
\label{Graphon}
\end{figure}

On $\mathscr{W}$ there is a natural equivalence relation, referred to as `$\sim$'. More precisely, with $\mathscr{M}$ denoting the set of measure-preserving bijections $\sigma\colon\, [0,1] \to [0,1]$, we write $h_1(x,y) \sim h_2(x,y)$ when there exists a $\sigma \in \mathscr{M}$ such that $h_1(x,y) = h_2(\sigma(x), \sigma(y))$ for all $(x,y) \in [0,1]^2$. This equivalence relation induces the quotient space $(\tilde{\mathscr{W}}, \delta_\square)$, where $\delta_\square$ is the \emph{cut metric} defined by 
\begin{equation}
\delta_\square(\tilde h_1, \tilde h_2) := \inf_{\sigma_1, \sigma_2 \in \mathscr{M}} d_\square (h_1^{\sigma_1}, h_2^{\sigma_2}), \quad \tilde h_1, \tilde h_2 \in \tilde{\mathscr{W}} . 
\end{equation}
The space $(\tilde{\mathscr{W}}, \delta_\square)$ {\em is} compact \cite[Lemma 8]{LSa}. 

Suppose that $H$ is a simple graph on $k$ vertices. The \emph{homomorphism density} of $H$ in $G\supseteq H$ is defined as 
\begin{equation}
t(H,G) = t(H,h^G) := \int_{[0,1]^k} \ddd x_1\, \dots\, \ddd x_k\, \prod_{\{i,j\} \in E(H) } h^G(x_i,x_j),
\end{equation}
where $E(H)$ is the set of edges of $H$ and $k=|E(H)|$. The homomorphism densities are continuous with respect to the cut metric  \cite[Proposition 3.2]{C}.


\subsection{LDP for the inhomogeneous ERRG}
\label{S1.3}

Let $r \in \mathscr{W}$ be a \emph{reference graphon} satisfying
\begin{equation}
\label{Assbasic}
\exists\,\eta>0\colon\qquad \eta \leq r(x,y) \leq 1-\eta \quad \forall\,x,y \in [0,1]^2.
\end{equation}
Fix $n \in \mathbb{N}$ and consider the random graph $G_n$ with vertex set $[n]=\{1, \dots, n \}$ where the pair of vertices $i,j \in [n]$, $i \neq j$, is connected by an edge with probability $r(\tfrac{i}{n}, \tfrac{j}{n})$, independently of other pairs of vertices. Write $\mathbb{P}_n$ to denote the law of $G_n$. Use the same symbol for the law on $\mathscr{W}$ induced by the map that associates with the graph $G_n$ its graphon $h^{G_n}$. Write $\tilde{\mathbb{P}}_n$ to denote the law of $\tilde{h}^{G_n}$, the equivalence class associated with $h^{G_n}$.

The following LDP has been proven in \cite{DS} and is an extension of the celebrated LDP for the homogeneous ERRG derived in \cite{CV}. 

\begin{theorem}{\bf [LDP for inhomogeneous ERRG]}
\label{thm:LDPinhom}
Subject to \eqref{Assbasic}, the sequence of probability measures $(\tilde{\mathbb{P}}_n)_{n\in\mathbb{N}}$ satisfies the LDP on $(\tilde{\mathscr{W}},\delta_{\square})$ with rate $\binom{n}{2}$, i.e., 
\begin{equation}
\begin{aligned}
\limsup_{n \to \infty} \frac{1}{\binom{n}{2}} \log \tilde{\mathbb{P}}_n (\mathcal{C}) 
&\leq - \inf_{\tilde h \in \mathcal{C}} J^*_r(\tilde h)
&\forall\,\mathcal{C} \subseteq \tilde{\mathscr{W}} \text{ closed},\\ 
\liminf_{n \to \infty} \frac{1}{\binom{n}{2}} \log \tilde{\mathbb{P}}_n(\mathcal{O}) 
&\geq - \inf_{\tilde h \in \mathcal{O}} J^*_r(\tilde h)
&\forall\,\mathcal{O} \subseteq \tilde{\mathscr{W}} \text{ open}.
\end{aligned}
\end{equation}
Here the rate function $J^*_r\colon\,\tilde{\mathscr{W}} \to \mathbb{R}$ is the lower semi-continuous envelope of the function $J_r$ given by
\begin{equation}
\label{Jrhdef}
J_r(\tilde h) = \inf_{\sigma \in \mathscr{M}} I_r(h^\sigma),
\end{equation}
where $h$ is any representative of $\tilde h$ and 
\begin{equation}
\label{Irhdef}
I_r(h) := \int_{[0,1]^2} \ddd x\,\ddd y\,\,\mathcal{R}\big(h(x,y) \mid r(x,y)\big), \quad h \in \mathscr{W},
\end{equation}
with
\begin{equation}
\label{Rdef}
\mathcal{R}\big(a \mid b\big) := a \log \tfrac{a}{b} + (1-a) \log \tfrac{1-a}{1-b}
\end{equation}
the relative entropy of two Bernoulli distributions with success probabilities $a \in [0,1]$, $b \in (0,1)$ $($with the convention $0 \log 0 =0$$)$.
\end{theorem}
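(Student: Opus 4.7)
The plan is to follow the Chatterjee--Varadhan strategy used for the homogeneous case, adapted to accommodate the reference graphon $r$. Since $(\tilde{\mathscr{W}},\delta_\square)$ is compact, the LDP can be established by proving exponential tightness (automatic by compactness) together with matching upper and lower bounds on balls, after which the Varadhan machinery upgrades the bounds to arbitrary closed/open sets. The key reduction is to pass first through block approximations on the level of labeled graphons, then project to the quotient and finally take a refinement limit.

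First I would fix a positive integer $k$ and the partition of $[0,1]$ into $k$ equal intervals $I_1,\dots,I_k$, and work with the block-averaged graphon $h_k^{G_n}(x,y):=k^2\int_{I_a\times I_b}h^{G_n}$ on $I_a\times I_b$. Under $\mathbb{P}_n$, the number of edges between blocks $a,b$ is a sum of independent Bernoulli variables with success probabilities $r(\tfrac{i}{n},\tfrac{j}{n})$ with $\tfrac{i}{n}\in I_a, \tfrac{j}{n}\in I_b$. Classical Cramér-type estimates (applied jointly to the finitely many blocks) together with \eqref{Assbasic} then give, for each block-constant graphon $h_k$,
\begin{equation}
\label{blocksanov}
\mathbb{P}_n\bigl(\,d_\square(h_k^{G_n},h_k)<\varepsilon\bigr)
= \exp\!\Bigl(-\tbinom{n}{2}\bigl[I_{r_k}(h_k)+o(1)\bigr]\Bigr),
\end{equation}
where $r_k$ is the analogous block-averaging of $r$. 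The $o(1)$ is uniform in $h_k$ by the uniform lower bound $\eta$ in \eqref{Assbasic}, which keeps the relative entropy well-behaved.

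Next I would push \eqref{blocksanov} through the quotient map $h\mapsto\tilde h$: by definition $\delta_\square(\tilde h_1,\tilde h_2)\le d_\square(h_1^{\sigma_1},h_2^{\sigma_2})$, so summing over the (at most $(k!)$ many relabelings of the blocks) costs only a factor that is negligible on the exponential scale $\binom{n}{2}$. This produces an LDP on the finite-dimensional space of $k\times k$ block graphons (modulo block permutations) with rate function $\tilde h_k\mapsto \inf_{\sigma}I_{r_k}(h_k^\sigma)$. A standard projective-limit / Dawson--G\"artner argument, combined with the martingale-type convergence $h_k^{G_n}\to h^{G_n}$ in $\delta_\square$ uniformly in $n$ (which is the inhomogeneous counterpart of Szemer\'edi regularity used in \cite{CV}), then allows taking $k\to\infty$ inside the variational problem. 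The monotone convergence of conditional expectations yields $I_{r_k}(h_k^\sigma)\uparrow I_r(h^\sigma)$ and hence the candidate rate function $J_r(\tilde h)$; passing to its lower semi-continuous envelope $J_r^*$ is forced by the general fact that LDP rate functions must be lsc.

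The main obstacles are two. First, the fact that $(\mathscr{W},d_\square)$ is \emph{not} compact forces the whole analysis to be carried out on the quotient $(\tilde{\mathscr{W}},\delta_\square)$, so one must verify carefully that the events $\{d_\square(h_k^{G_n},h_k)<\varepsilon\}$ descend to neighborhoods of $\tilde h_k$ without losing the sharp constant in \eqref{blocksanov}; this is where the compactness lemma \cite[Lemma~8]{LSa} and the weak regularity lemma enter. Second, one must show that the infimum over $\sigma\in\mathscr{M}$ in \eqref{Jrhdef} behaves well under the block refinement: a priori $\inf_\sigma I_{r_k}(h_k^\sigma)$ might not converge to $\inf_\sigma I_r(h^\sigma)$, and one needs the uniform ellipticity \eqref{Assbasic} and the continuity of $\mathcal R(\,\cdot\mid r(x,y))$ in $L^1$ to control the interchange of limit and infimum, which is precisely what produces the lsc envelope $J_r^*$ rather than $J_r$ itself in the final statement.
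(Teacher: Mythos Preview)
The paper does not prove Theorem~\ref{thm:LDPinhom}; it is quoted as a known result with the attribution ``The following LDP has been proven in \cite{DS} and is an extension of the celebrated LDP for the homogeneous ERRG derived in \cite{CV}.'' There is therefore no proof in the paper to compare your proposal against. Your sketch is broadly in the spirit of the Chatterjee--Varadhan/Dhara--Sen approach (block approximation, Cram\'er on blocks, Szemer\'edi regularity to pass to the quotient), and the two obstacles you flag are the genuine ones.

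One technical point worth tightening: when you pass to the quotient you write ``summing over the (at most $k!$ many relabelings of the blocks)''. This is not quite the right count. The event $\{\tilde f_n\in\tilde{\mathbb B}_\square(\tilde h,\varepsilon)\}$ lifts to a union over \emph{all} $n!$ vertex relabelings, not just the $k!$ block permutations, and $\log(n!)$ is $o\bigl(\binom{n}{2}\bigr)$ but not obviously controlled by a block argument alone. The actual mechanism (as in \cite{CV}, \cite{DS}, and indeed in Section~\ref{subsec:UB} of the present paper for the dynamic version) is to use the weak regularity lemma to reduce the union over $\mathscr{M}_n$ to a union over a finite set of ``canonical'' permutations $\mathcal{T}$ whose cardinality is independent of $n$; only after that reduction does the union bound become harmless. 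Your sketch gestures at Szemer\'edi regularity but places it at the wrong step.
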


\noindent
It is clear that $J^*_r$ is a good rate function, i.e., $J^*_r \not\equiv \infty$ and $J^*_r$ has compact level sets.  It was shown in \cite{M*} that \eqref{Assbasic} can be weakened: Theorem~\ref{thm:LDPinhom} holds when $0<r<1$ almost everywhere under the integrability conditions $\log r, \log(1-r) \in L^1([0,1]^2)$. Moreover, it was shown in \cite{M*} that $J_r$ is lower semi-continuous on $\tilde{\mathscr{W}}$, and so $J_r^*=J_r$. In \cite{BCGPS} the case where $r$ is a block graphon is considered, which is allowed to take the value $0$ or $1$ on some blocks.   


\subsection{Dynamics for the inhomogeneous ERRG}
\label{S1.4}

We now allow the edges to alternate between being active and inactive, thereby creating a dynamic version of the setup studied in \cite{CV}. Let $\mathbb{G}_n$ be the set of simple graphs with $n$ vertices. Fix a time horizon $T \in (0,\infty)$. Consider a continuous-time Markov process $\{G_n(t)\}_{t \in [0,T]}$ with state space $\mathbb{G}_n$, starting from a given graph $G_n(0)$. The edges in $G_n(t)$ update \emph{independently} after exponentially distributed times, according to the following rules:
\begin{itemize}
\item[$\circ$] an inactive edge becomes active at rate $\lambda \in (0,\infty)$; 
\item[$\circ$] an active edge becomes inactive at rate $\mu \in (0,\infty)$. 
\end{itemize}
Throughout the paper, the transition rates $\lambda,\mu \in (0,\infty)$ are held fixed. Let $\pab[t]$ ($\pbb[t]$) denote the probability that an initially inactive (active) edge is \emph{active} at time $t$. Then
\begin{equation}
\label{ptdefs}
\pab[t] = \frac{\lambda - \lambda \,\eee^{-t(\lambda + \mu)}}{\lambda + \mu}, 
\qquad  \pbb[t]=\frac{\lambda + \mu \,\eee^{-t(\lambda + \mu)}}{\lambda + \mu}.
\end{equation}

We can represent $\{G_n(t)\}_{t \in [0,T]}$ as a graphon-valued process. Abbreviate 
\begin{equation}
f_{n,t} := h^{G_n(t)}, \qquad f_n := (f_{n,t})_{t \in [0,T]}.
\end{equation} 
Let $\mathscr{W} \times [0,T]$ be the set of $\mathscr{W}$-valued paths on the time interval $[0,T]$. On the space $(\mathscr{W}, d_\square)$, we can define the Skorohod topology on $\mathscr{W}$-valued paths in the usual way, namely,
\begin{equation}
\label{Ddef} 
D=D([0,T], \mathscr{W}) = \mbox{set of c\`adl\`ag paths in } \mathscr{W},
\end{equation}
and equip $D$ with a metric that induces the Skorohod topology. Define 
\begin{equation}
\mu_n(B) := \mathbb{P}_n(f_n \in B), \qquad \tilde\mu_n(B):= \mathbb{P}_n(\tilde f_n \in B),
\end{equation} 
for $B$ in the Borel sigma-algebra induced by the metric.

Note that the initial graphon $f_{n,0}$ effectively plays the role of the reference graphon $ r$ in the static setting of an inhomogeneous ERRG treated in \cite{DS}.


\subsection{Main theorem: sample-path LDP}
\label{S1.5}

In order to state our main theorem (the sample-path LDP in Theorem~\ref{GSPLDP} below), we first state a few simpler LDPs.


\subsubsection{LDP for local edge density}

Fix $t \in [0,T]$, $(x,y) \in [0,1]^2\setminus D^*$, with $D^*$ the diagonal, and $\Delta>0$ small enough so that $[x,x+\Delta) \times [y,y+\Delta) \cap D^* = \emptyset$. Let 
\begin{equation}
\begin{aligned}
\widebar u_{t,n} = \widebar u_{t,n}(x,y) &= \frac{1}{\Delta^2} \int_{[x,x + \Delta) \times [y, y + \Delta)} 
\ddd \widebar x\,\ddd \widebar y\, f_{n,t}(\widebar x,\widebar y)\\
&= \frac{1}{(n\Delta)^2} \sum_{(i,j) \in n [x,x+\Delta) \times n [y,y +\Delta)} 
1_{\{i \text{ and } j \text{ are connected in } G_n(t)\}}
\end{aligned}
\end{equation}
denote the proportion of active edges in $[x,x+\Delta) \times [y,y+\Delta)$ at time $t$ (for simplicity we pretend that $nx,ny,n\Delta$ are integer). Fixing an initial proportion of active edges $\widebar u_{0,n} = \widebar u$ that is independent of $n$, we see that the moment generating function of $\widebar u_{t,n}$, defined by $M_{\widebar u_{t,n}}(s) := \mathbb{E}_n[\eee^{s\widebar u_{t,n}}]$, $s \in \mathbb{R}$, equals
\begin{equation}
M_{\widebar u_{t,n}}(s) = [(1-\pbb[t]) + \eee^{sN^{-1}} \pbb[t]]^{N\widebar u} 
[(1- \pab[t]) + \eee^{sN^{-1}} \pab[t]]^{N (1-\widebar u)} 
\end{equation}
with $N:=(n\Delta)^2$, the total number of edges in $[x,x+\Delta) \times [y,y+\Delta)$. Here, $N^{-1}$ is the contribution to $\widebar u_{t,n}$ from a single active edge. Hence
\begin{equation}
\label{cumlim}
\lim_{n\to\infty} N^{-1} \log M_{\widebar u_{t,n}}(vN) = J_{t,v}(\widebar u), \qquad v\in \mathbb{R},
\end{equation}
with
\begin{equation}
\label{ee}
J_{t,v}(\widebar u) := \widebar u \log [(1-\pbb[t]) + \eee^v \pbb[t]] 
+ (1-\widebar u) \log[(1- \pab[t]) + \eee^v \pab[t]].
\end{equation}
Then, by the G\"artner-Ellis theorem \cite[Chapter V]{dH}, the sequence $(\widebar u_{t,n})_{n\in\mathbb{N}}$ satisfies the LDP on $\mathbb{R}$ with rate $N$ and with good rate function
\begin{equation}
\label{ef}
I_{1,t}(\widebar u, w) := \sup_{v \in \mathbb{R}} [vw-J_{t,v}(\widebar u)], \qquad w \in \mathbb{R}, 
\end{equation}
which is the Legendre transform of \eqref{ee}. We use the indices $1,t$ to indicate that \eqref{ef} is the rate function for $1$ time lapse of length $t$.  
For completeness we remark that the supremum in \eqref{ef} allows a closed-form solution. Locally abbreviating $p_{i}:=p_{i1,t}$ and $\bar p_i:=1-p_i$, for $i=0,1$, the optimizing $v$ equals, with $a:= p_{0}p_{1}(1-w)$, $b:=p_0\bar p_1(1-\bar u-w)+\bar p_0p_1(\bar u-w)$, and $c:=-w\bar p_0\bar p_1$,
the familiar $\log((2a)^{-1}(-b\pm \sqrt{b^2-4ac}))$. 
Here the positive root should be chosen if $w> \bar u p_1+(1-\bar u)p_0$ (`exponential tilting in the upward direction': target value is larger than the mean)
and the negative root otherwise (`exponential tilting in the downward direction': target value is smaller than the mean).


\subsubsection{Two-point LDP}

If we extend the domain of $I_{1,t}(\widebar u,w)$ in \eqref{ef} to $\mathscr{W}^2$ by putting
\begin{equation}
\label{efg} 
I_{1,t}(u,h) := \int_{[0,1]^2} \ddd x\, \ddd y\,I_{1,t}(u(x,y),h(x,y)),
\end{equation}
then we obtain a candidate rate function for a \emph{two-point} LDP. However, $I_{1,t}$ is not necessarily well defined on $\tilde{\mathscr{W}}^2$ because for $u_1 \sim u_2$ and $h_1 \sim h_2$ it may be that $I_{1,t}(u_1,h_1) \neq I_{1,t}(u_2, h_2)$. To define a valid candidate rate function, we put
\begin{equation}
\label{MCP2}
\tilde I_{1,t}(\tilde u, \tilde h) := \inf_{\sigma_1, \sigma_2 \in \mathscr{M}} I_{1,t}(u^{\sigma_1}, h^{\sigma_2}) 
=  \inf_{\sigma_2 \in \mathscr{M}} I_{1,t}(u, h^{\sigma_2}) = \inf_{\sigma_1 \in \mathscr{M}} I_{1,t}(u^{\sigma_1}, h),
\end{equation}
noting that $I_{1,t}(u^{\sigma_1},h^{\sigma_2})=I_{1,t}(u,h^{\sigma_2 \,\circ\, \sigma_1^{-1}}) = I_{1,t}(u^{\sigma_1 \,\circ\, \sigma_2^{-1}},h)$ and $\sigma_2 \circ \sigma_1^{-1}, \sigma_1 \circ \sigma_2^{-1} \in \mathscr{M}$.

Define
\begin{equation}
\mu_{n,t}(B) := \mathbb{P}_n(f_{n,t} \in B), \qquad \tilde{\mu}_{n,t}(B) := \mathbb{P}_n(\tilde{f}_{n,t} \in B)
\end{equation}
for $B$ in the Borel sigma-algebra. 

\begin{theorem}{\bf [Two-point LDP]}
\label{tTwoLDP}
Suppose that $\lim_{n\to\infty} \delta_\square(\tilde f_{n,0}, \tilde u) = 0$ for some $\tilde u \in \tilde{\mathscr{W}}$. Then the sequence of probability measures $(\tilde{\mu}_{n,T})_{n\in\mathbb{N}}$ satisfies the LDP on $\tilde{\mathscr{W}}$ with rate ${n \choose 2}$ and with good rate function $\tilde I_{1,T}(\tilde u, \tilde h)$.
\end{theorem}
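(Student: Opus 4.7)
The plan is to combine the conditional independence of the edge dynamics with the graphon-theoretic machinery of \cite{CV,DS,M*}. Since edges evolve independently, given the (deterministic) initial graph $f_{n,0}$ the graph $G_n(T)$ is an inhomogeneous ERRG with edge-activation probability
\begin{equation*}
r_n(x,y) \,:=\, p_{11,T}\,f_{n,0}(x,y)\,+\,p_{01,T}\bigl(1-f_{n,0}(x,y)\bigr)\,\in\,\{p_{01,T},p_{11,T}\}\,\subset\,(0,1),
\end{equation*}
so the reference graphon satisfies \eqref{Assbasic} uniformly in $n$ with $\eta=\min\{p_{01,T},1-p_{11,T}\}$. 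This reduces the problem to an LDP for an inhomogeneous ERRG, but with a reference whose fine structure (as opposed to its cut-metric limit) is essential for identifying the correct rate function.

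The key analytic identity driving the proof is the contraction
\begin{equation*}
I_{1,T}(\bar u,w)\,=\,\inf_{\substack{a,b\in[0,1]\\\bar u\,a+(1-\bar u)\,b=w}}\bigl[\bar u\,\mathcal{R}(a\mid p_{11,T})+(1-\bar u)\,\mathcal{R}(b\mid p_{01,T})\bigr],
\end{equation*}
which I would verify directly from \eqref{ee}--\eqref{ef} by Legendre duality (equivalently, from Cram\'er's theorem applied to two independent binomials followed by the contraction principle). Here $a$ and $b$ represent the time-$T$ densities of active edges among those that were initially active and initially inactive, respectively. At the block level (partitioning $[0,1]$ into $K$ intervals), the joint LDP for the pair of block densities $(a_{k\ell},b_{k\ell})$ follows from Cram\'er, and the contraction principle then produces the rate function $\sum_{k,\ell}\Delta^2\,I_{1,T}(\bar u_{k\ell},w_{k\ell})$ for the block densities $w_{k\ell}$ of $\tilde f_{n,T}$, where $\bar u_{k\ell}$ denotes the empirical density of initially-active edges in block $(k,\ell)$ and $\bar u_{k\ell}\to u^{(K)}_{k\ell}$.

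To lift this block-level LDP to a graphon-level LDP on $(\tilde{\mathscr{W}},\delta_\square)$, I would follow the scheme of \cite{CV}: approximate $u$ and $h$ by step graphons at resolution $K$ (using \cite[Lemma~2.1]{LSa}), invoke the block-level LDP at that resolution, and then let $K\to\infty$ while invoking cut-metric lower semi-continuity of the rate functional (in the spirit of \cite{M*}). The quotient under $\mathscr{M}$ produces the infimum in \eqref{MCP2}, and the limit is $\tilde I_{1,T}(\tilde u,\tilde h)$. The main obstacle is precisely this graphon lifting: cut-metric closeness of $\tilde f_{n,0}$ to $\tilde u$ must be converted into quantitative agreement of block densities on a sufficiently fine partition, and block-optimal permutations $\sigma$ must be stitched into a single measure-preserving bijection on $[0,1]$ without $O(1)$ loss in the total cost. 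This is a standard but technically involved graphon-theoretic manipulation of exactly the type already developed in \cite{CV,DS,M*}, and I would import it with the necessary modifications to accommodate the bi-level structure of $r_n$.
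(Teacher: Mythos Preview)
Your proposal is correct in outline and identifies the right obstacle, but it takes a somewhat different analytic route from the paper's own proof. Your key device is the contraction identity
\[
I_{1,T}(\bar u,w)=\inf_{\bar u a+(1-\bar u)b=w}\bigl[\bar u\,\mathcal{R}(a\mid p_{11,T})+(1-\bar u)\,\mathcal{R}(b\mid p_{01,T})\bigr],
\]
which splits the problem into two independent Bernoulli populations with \emph{fixed} references $p_{11,T}$ and $p_{01,T}$; this makes the reduction to the static IERRG framework of \cite{CV,DS} explicit and is a clean way to see why the $n$-dependent reference $r_n$ causes no trouble at the block level. The paper, by contrast, never invokes this decomposition: it works directly with the Legendre-transform definition of $I_{1,T}$, proving lower semi-continuity via a block-averaging/convexity argument (Lemma~\ref{LwrSC}), the upper bound via Szemer\'edi's regularity lemma together with a finite permutation net on the quotient space (the set $\mathcal{T}$ in Section~\ref{subsec:UB}), and the lower bound via an explicit Cram\'er tilt (Section~\ref{subsec:LB}). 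What your contraction buys is a transparent probabilistic interpretation and a shortcut at the block level; what the paper's direct approach buys is that it never needs to track the auxiliary pair $(a,b)$ through the quotient under $\mathscr{M}$. Either way, the step you flag as ``the main obstacle'' --- converting cut-metric closeness of $\tilde f_{n,0}$ to $\tilde u$ into a workable finite permutation argument on $\tilde{\mathscr{W}}$ --- is precisely where the paper spends most of its effort (the six-step upper-bound proof), and it is not something one can simply import from \cite{DS} without real work, since there the reference graphon is fixed rather than an empirical graphon converging only in $\delta_\square$. Your proposal is honest about deferring this, but be aware that this is the heart of the proof rather than a routine adaptation.
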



\subsubsection{Multi-point LDP}

The \emph{multi-point} candidate rate function follows from the two-point candidate rate function by iteration. Let ${\mathscr J}$ denote the collection of all ordered finite subsets of $[0,T]$, i.e., $j \in {\mathscr J}$ if $j=(t_0,t_1,\dots,t_k)$ with $0=t_0 < t_1 < \dots < t_k=T$ for some $k = |j| \in \mathbb{N}$. For $\tilde g \in \tilde{\mathscr{W}}\times [0,T]$ and $j\in {\mathscr J}$, let \begin{equation}
p_j(\tilde g) = (\tilde g_{t_0}, \tilde g_{t_1}, \dots, \tilde g_{t_{|j|}}) \in \tilde{\mathscr{W}}^{|j|+1}.
\end{equation} 

\begin{theorem}{\bf [Multi-point LDP]}
\label{tMultiLDP}
Suppose that $\lim_{n\to\infty} \delta_\square(\tilde f_{n,0}, \tilde u) = 0$ for some $\tilde u \in \tilde{\mathscr{W}}$. Then, for every $j\in {\mathscr J}$, the sequence of probability measures $(\tilde{\mu}_{n}\circ p_j^{-1})_{n\in\mathbb{N}}$ satisfies the LDP on $\tilde{\mathscr{W}}^{|j|+1}$ with rate ${n \choose 2}$ and with good rate function 
\begin{equation}
\tilde I_j\left((\tilde h_i)_{i=0}^{|j|}\right) := \sum^{|j|}_{i=1} \tilde I_{1, t_{i}-t_{i-1}}(\tilde h_{i-1}, \tilde h_{i}) 
\end{equation}
with $\tilde h_0 = \tilde u$.
\end{theorem}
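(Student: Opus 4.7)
The plan is induction on $k=|j|$, with Theorem \ref{tTwoLDP} serving as both the base case and the workhorse for the inductive step. The structural input is the Markov property of the edge-switching dynamics at the graph level: conditional on $G_n(t_{k-1})$, the restricted process $\{G_n(t_{k-1}+s)\}_{s\in[0,t_k-t_{k-1}]}$ is independent of the past and has the same transition law as the original process restarted from $G_n(t_{k-1})$ over horizon $t_k - t_{k-1}$. The base case $k=1$ is precisely Theorem \ref{tTwoLDP}.

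For the inductive step, write $j' := (t_0, \dots, t_{k-1})$ and assume the LDP for $(\tilde f_{n, t_0}, \dots, \tilde f_{n, t_{k-1}})$ with rate function $\tilde I_{j'}$. For any Borel $B \subseteq \tilde{\mathscr{W}}^{k+1}$, conditioning on $G_n(t_{k-1})$ yields
\begin{equation}
\mathbb{P}_n\big((\tilde f_{n,t_0}, \dots, \tilde f_{n,t_k}) \in B\big)
= \mathbb{E}_n\!\left[\mathbb{P}_n\!\big(\tilde f_{n,t_k} \in B_{(\tilde f_{n,t_0}, \dots, \tilde f_{n,t_{k-1}})} \,\big|\, G_n(t_{k-1})\big)\right],
\end{equation}
with slice notation $B_{(\tilde h_0, \dots, \tilde h_{k-1})} := \{\tilde h : (\tilde h_0, \dots, \tilde h_{k-1}, \tilde h) \in B\}$. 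For the upper bound on a closed $C\subseteq\tilde{\mathscr W}^{k+1}$, exploit compactness of $\tilde{\mathscr{W}}^k$ to cover it by finitely many $\epsilon$-balls; on each ball, bound the outer probability via the induction hypothesis and bound the inner conditional probability via Theorem \ref{tTwoLDP} applied with the ball's center as the limiting initial graphon. Summing over the finite cover, sending $n \to \infty$, and then letting $\epsilon \downarrow 0$ produces the upper bound with the sum-type rate function. For the lower bound on an open $O$, pick $(\tilde h_0^*, \dots, \tilde h_k^*) \in O$ and a product-neighborhood around it contained in $O$, then bound from below by the probability that each coordinate of the path lies in its respective component; apply the induction hypothesis to the first $k$ coordinates and Theorem \ref{tTwoLDP} restarted from an approximation of $\tilde h_{k-1}^*$ for the last coordinate.

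The main obstacle will be verifying enough stability of $\tilde I_{1, s}(\tilde u, \tilde h)$ in its first argument to pass to the limit as $\epsilon \downarrow 0$: one needs that $\tilde I_{1, s}(\tilde u_\epsilon, \tilde h) \to \tilde I_{1, s}(\tilde u, \tilde h)$ when $\delta_\square(\tilde u_\epsilon, \tilde u) \to 0$, uniformly enough to be fed into the chain-rule bounds above. Since $\mathcal{R}(a\mid b)$ is jointly continuous on $(0,1)^2$ and $J_{t,v}(\widebar u)$ in \eqref{ee} is linear in $\widebar u$, the integrand $I_{1,s}(u,h)$ of \eqref{efg} is jointly continuous on the relevant domain, and $\tilde I_{1, s}$ on $\tilde{\mathscr{W}}^2$ is obtained by infimising over $\sigma_1,\sigma_2\in\mathscr{M}$ as in \eqref{MCP2}. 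Once this continuity is in hand, the \emph{a priori} ambiguity between the conditional LDP starting from the true random graph $G_n(t_{k-1})$ and starting from a deterministic approximation of $\tilde h_{k-1}^*$ disappears, the induction closes, and the additive rate function $\tilde I_j$ of the statement emerges.
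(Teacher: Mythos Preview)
Your proposal is correct and follows essentially the same route as the paper. The paper's Section~\ref{PrLdp} also decomposes via the Markov property and feeds each increment into Theorem~\ref{tTwoLDP}, using compactness of $\tilde{\mathscr W}$ for the local-to-global step in the upper bound; the only cosmetic difference is that the paper handles all $|j|$ increments simultaneously (see \eqref{MP1}--\eqref{As1LSC}) rather than by explicit induction on $|j|$.

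One refinement is worth flagging. You correctly identify stability in the first argument of $\tilde I_{1,s}$ as the crux, but your proposed justification---joint continuity of the scalar integrand $I_{1,s}(u,h)$---does not by itself yield continuity of the graphon-level functional in the cut metric, since $\delta_\square$-convergence does not imply $L^1$-convergence of the integrand. What actually closes the loop is that the proof of Theorem~\ref{tTwoLDP} in Section~\ref{S3} already establishes both LDP bounds \emph{uniformly over initial empirical graphons in a shrinking $\eta$-ball} around the limit (see the hypothesis $u_n^\eta\in\mathbb B_\square(u,\eta)$ in Sections~\ref{subsec:UB}--\ref{subsec:LB} and the $\inf_{\tilde w}$ in \eqref{MP1}). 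This uniformity, together with the lower semi-continuity of $(\tilde u,\tilde h)\mapsto \tilde I_{1,s}(\tilde u,\tilde h)$ from Lemma~\ref{LwrSC}, is exactly what you need to pass the random starting point $\tilde f_{n,t_{k-1}}$ through the last step and let $\epsilon\downarrow 0$; full continuity of $\tilde I_{1,s}$ is neither claimed nor required.
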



\subsubsection{Sample-path LDP}

Let $\mathcal{AC}$ denote the set of functions $h \in \mathscr{W} \times [0,T]$ such that $t \mapsto h_t(x,y)$ is absolutely continuous for almost all $(x,y) \in [0,1]^2$. For $h \in \mathcal{AC}$, put 
\begin{equation}
\left.h'_t(x,y) = \frac{\partial h_s(x,y)}{\partial s}\right|_{s=t}.
\end{equation} 
To write down a candidate rate function for the sample-path LDP, fix $\Delta t>0$ such that $T/\Delta t \in \mathbb{N}$. We will show that
\begin{equation}
I_j\left((h_{i\Delta})_{i=0}^{T/\Delta t}\right) = \sum^{T/\Delta t}_{i=1} I_{1, \Delta t} (h_{(i-1)\Delta t}, h_{i \Delta t})  
\to I(h), \qquad \Delta t \downarrow 0, 
\end{equation}
with
\begin{equation}
\label{IDef}
I(h) := \left\{\begin{array}{ll}
\frac12 \int_0^T \ddd t \int_{[0,1]^2} \ddd x\,\ddd y\,\mathcal{L}(h_t(x,y),h'_t(x,y)), &h \in \mathcal{AC},\\
\infty,  &h \notin \mathcal{AC},
\end{array}
\right.
\end{equation}
where
\begin{equation}
\label{LDef}
\mathcal{L}(a,b) = \sup_{v \in \mathbb{R}} 
\big[v b - \lambda (e^v-1)(1-a) - \mu(\eee^{-v} -1) a\big], \qquad a \in [0,1],\,b \in \mathbb{R}.
\end{equation}
As before, $I$ in \eqref{IDef} is not necessarily well defined on $\tilde{\mathscr{W}} \times [0,T]$, and therefore is not a valid candidate function. For this reason we extend the equivalence relation $\sim$ on $\mathscr{W}$ to the equivalence relation $\sim$ on $\mathscr{W} \times [0,T]$ obtained by defining, for every $h_1,h_2 \in \mathscr{W} \times [0,T]$,
\begin{equation}
\label{PathEDef}
h_1 \sim h_2 \quad \text{ if and only if } \quad (h_1)_t \sim (h_2)_t\quad \,\forall\, t \in [0,T],
\end{equation}
and writing $\widetilde{h}$ to denote the equivalence class of $h \in \mathscr{W} \times [0,T]$. 

\begin{theorem}{\bf [Sample-path LDP]}
\label{GSPLDP}
Suppose that $\lim_{n\to\infty} \delta_\square(\tilde f_{n,0}, \tilde u) = 0$ for some $\tilde u \in \tilde{\mathscr{W}}$. Then the sequence $(\tilde\mu_n)_{n\in\mathbb{N}}$ satisfies the LDP on $\tilde{\mathscr{W}} \times [0,T]$ with rate ${n \choose 2}$ and with good rate function 
\begin{equation}
\label{GLDPR}
\tilde I(\tilde h):= \inf_{\substack{h \in \mathscr{W} \times [0,T]: \\ h \sim \tilde h}} I(h), 
\qquad \tilde h \in \tilde{\mathscr{W}} \times [0,T],
\end{equation}
with $\tilde h_0 = \tilde u$.
\end{theorem}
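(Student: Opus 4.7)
The plan is to derive Theorem~\ref{GSPLDP} from the multi-point LDP of Theorem~\ref{tMultiLDP} in three steps: a projective-limit LDP via Dawson--Gärtner, an exponential-tightness estimate to upgrade to the Skorohod space, and an identification of the resulting rate function with the action integral $\tilde I$. First, apply the Dawson--Gärtner theorem to the family of finite-dimensional LDPs indexed by $j \in \mathscr J$ (which are consistent because $p_{j'}$ factors through $p_j$ whenever $j \subseteq j'$). This produces a weak LDP on $\tilde{\mathscr W}^{[0,T]}$ endowed with the topology of pointwise $\delta_\square$-convergence, with rate function
\begin{equation}
\tilde I^{\mathrm{DG}}(\tilde h) = \sup_{j \in \mathscr J}\, \tilde I_j\big(p_j(\tilde h)\big).
\end{equation}

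Second, establish exponential tightness of $(\tilde \mu_n)_{n \in \mathbb N}$ in $D([0,T], \tilde{\mathscr W})$. The total number of edge transitions on $[0,T]$ is a sum of $\binom{n}{2}$ independent Poisson-type variables each with mean $(\lambda+\mu)T$, and so concentrates exponentially at the correct rate. Since flipping $k$ edges changes the graphon by at most $k/\binom{n}{2}$ in $d_\square$-norm, a Markov-property argument gives
\begin{equation}
\limsup_{n \to \infty} \frac{1}{\binom{n}{2}} \log \mathbb P_n\!\left(\sup_{|s-t|<\delta} d_\square(f_{n,s},f_{n,t}) > \eta\right) \xrightarrow[\delta \downarrow 0]{} -\infty,
\end{equation}
which, together with compactness of $(\tilde{\mathscr W}, \delta_\square)$, supplies exponential tightness in the Skorohod topology and upgrades the weak LDP to a full LDP.

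Third, identify $\tilde I^{\mathrm{DG}}$ with $\tilde I$ of \eqref{GLDPR}. Working first with representatives in $\mathscr W \times [0,T]$, observe that refining a partition can only increase the multi-point sum (a consequence of the contraction principle applied to $p_{j'} \to p_j$), so $\sup_j I_j = \lim_{\mathrm{mesh}(j)\downarrow 0} I_j$. Using the short-time expansions $p_{11,t} = 1-\mu t + O(t^2)$ and $p_{01,t} = \lambda t + O(t^2)$, Taylor expansion of \eqref{ee} yields
\begin{equation}
J_{t,v}(u) = uv + t\big[(1-u)\lambda(\eee^v -1) + u\mu(\eee^{-v}-1)\big] + O(t^2),
\end{equation}
whence by Legendre duality
\begin{equation}
I_{1,\Delta t}(u,\, u + b\,\Delta t) = \Delta t\,\mathcal L(u,b) + o(\Delta t).
\end{equation}
A Riemann-sum argument, combined with the factor $\binom{n}{2}/n^2 \to \tfrac 12$ and symmetry, converts $\sum_i I_{1,\Delta t_i}(h_{t_{i-1}}, h_{t_i})$ into $\tfrac 12 \int_0^T \ddd t \int_{[0,1]^2} \ddd x\, \ddd y\, \mathcal L(h_t, h'_t)$ whenever $h \in \mathcal{AC}$; for $h \notin \mathcal{AC}$ the superlinear growth of $\mathcal L(a,\cdot)$ forces divergence along suitably chosen partitions, matching the $\infty$ branch of \eqref{IDef}. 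Passing through the quotient via \eqref{MCP2} and \eqref{PathEDef} then produces the infimum $\inf_{h \sim \tilde h} I(h)$.

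The principal obstacle will be controlling the interplay between the pointwise-in-$t$ equivalence relation \eqref{PathEDef} and the absolute-continuity constraint: each $\tilde I_j$ is an infimum over $|j|+1$ \emph{independent} measure-preserving bijections, but $\tilde I(\tilde h)$ requires a single representative $h$ that is simultaneously $\sim$-consistent with $\tilde h$ at every time and absolutely continuous in $t$. Showing that the limit $\sup_j \tilde I_j$ agrees with $\inf_{h\sim\tilde h} I(h)$ rather than with a looser $t$-by-$t$ alignment demands a measurable selection of near-optimal $\sigma_t$'s that is sufficiently regular in $t$, which should be feasible because $\delta_\square$-optimal alignments are stable under small perturbations of the graphon. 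A secondary, mostly bookkeeping, difficulty is that $(\mathscr W, d_\square)$ itself is not compact, so the tightness estimate in Step~2 must genuinely be carried out on the quotient $(\tilde{\mathscr W}, \delta_\square)$.
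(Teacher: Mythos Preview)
Your three-step plan---Dawson--G\"artner from the multi-point LDP, exponential tightness via edge-flip counting, and rate identification through the short-time expansion $I_{1,\Delta t}(u,u+b\Delta t)=\Delta t\,\mathcal L(u,b)+o(\Delta t)$---is exactly the route the paper takes in Section~\ref{S4}. The paper's tightness argument (Lemma~\ref{LExT}) invokes the Feng--Kurtz criterion rather than directly bounding the modulus of continuity, but the underlying estimate is the same as yours: dominate the displacement in $d_\square$ by the number of edges that change in a short window, which is stochastically bounded by a binomial. Your treatment of the $\mathcal{AC}$ versus non-$\mathcal{AC}$ dichotomy also matches Lemma~\ref{RateEq}.

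Where you and the paper diverge is in how you address the obstacle you correctly flag as principal. The paper writes, in the proof of Lemma~\ref{RateEq},
\[
\tilde I_{\mathcal X}(\tilde h)\ \equiv\ \inf_{h\sim\tilde h}\ \sup_{j}\ \sum_i I_{1,t_i-t_{i-1}}(h_{t_{i-1}},h_{t_i})
\]
and then reduces both directions to representative-level comparisons $I_{\mathcal X}(h)$ versus $I(h)$; the displayed ``$\equiv$'' is the sup--inf/inf--sup interchange you are worried about, and the paper does not spell out its justification. Your proposed fix---select near-optimal $\sigma_t\in\mathscr M$ that vary regularly in $t$, relying on ``$\delta_\square$-optimal alignments being stable under small perturbations''---is not viable as stated: the infimum defining $\delta_\square$ need not be attained, approximate optimisers are in general highly non-unique, and they can jump discontinuously under arbitrarily small perturbations of the graphon (the quotient map $\mathscr W\to\tilde{\mathscr W}$ has no continuous section). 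The paper does \emph{not} attempt any time-regular selection of permutations; it works throughout with a single fixed representative $h\sim\tilde h$ and compares $I_{\mathcal X}(h)$ with $I(h)$ directly. If you follow your outline, this interchange is the step that still needs a real argument, and the stability heuristic you offer will not supply it.
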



\subsection{Discussion}
\label{S1.6}

Theorems \ref{tMultiLDP} and \ref{GSPLDP} are LDPs for the dynamic inhomogeneous Erd\H{o}s-R\'enyi random graph with independent edge switches. The fact that the rate is $\binom{n}{2}$, the total number of edges, is natural because a positive fraction of the states of the edges must switch in order to produce a change in the graphon. The fact that the rate function in the sample-path LDP is an \emph{action integral} is also natural, because what matters is both the value of the graphon and the gradient of the graphon integrated along the sample path (due to the exponentiality of the underlying switching mechanism). 

Even though the shape of the rate function in \eqref{GLDPR} can be guessed through standard large deviations arguments, the proof of the LDP requires various non-standard steps. Specifically, we are facing the following challenges:
\begin{itemize}
\item[$\circ$]
In Theorem \ref{thm:LDPinhom} the edge probabilities are determined by a single (typically smooth) reference graphon, whereas in Theorem \ref{tMultiLDP} the edge probabilities at time $T$ are determined by a sequence of (inherently rough) empirical graphons. This adds a layer of complexity to the proof, and requires a series of approximations that are technically demanding.
\item[$\circ$]
Theorem \ref{thm:LDPinhom} is an LDP on the quotient space $\tilde{\mathscr{W}}$, whereas Theorem \ref{GSPLDP} is an LDP on the quotient space of {\it paths} $\tilde{\mathscr{W}} \times [0,T]$. This leads to various complications in the proofs, as is also evident from the variational problems that arise when we apply the LDP. While the LDP for the static inhomogeneous ERRG is covered by \cite{C}, \cite{CV} and the sample-path LDP for collections of switching processes is studied in e.g.\  \cite{SW}, the dynamic inhomogeneous ERRG considered in the present paper faces the hurdles encountered in both these works.
\end{itemize}

Several extensions may be thought of. In order to achieve a space-inhomogeneous dynamics, we may replace $\lambda,\mu$ by graphons $\lambda(x,y),\mu(x,y)$, for $(x,y) \in [0,1]^2$, that are bounded away from $0$ and $1$, and let the edge between $i$ and $j$ switch on at rate $\lambda(\frac{i}{n},\frac{j}{n})$ and switch off at rate $\mu(\frac{i}{n},\frac{j}{n})$. In addition, these graphons may vary over time, in order to capture a time-inhomogeneous dynamics. Both extensions are straightforward and are therefore not addressed in the present paper. A challenging extension would be to consider dynamics where the switches of the edges are dependent (cf.\ the setup analysed in \cite{AHR}). 

The two applications to be described in Section~\ref{S2} show that the dynamics is a source of \emph{new phenomena}. The fact that dynamics brings extra richness is no surprise: the area of \emph{interacting particle systems} is a playground with a long history \cite{Li}.  

\subsection{Outline}
Section~\ref{S2} describes two applications of Theorems \ref{tMultiLDP} and \ref{GSPLDP}, formulated in Theorems \ref{LZlemtv} and \ref{SPCond} below. Section~\ref{S3} contains the proof of Theorem \ref{tTwoLDP}, Section~\ref{S4} the proof of Theorems~\ref{tMultiLDP} and \ref{GSPLDP}, and Section~\ref{S5} the proof of Theorems \ref{LZlemtv} and \ref{SPCond}. The applications show that the dynamics introduces interesting bifurcation phenomena.


\section{Applications}
\label{S2}

Section~\ref{S2.1} identifies the most likely path the process takes if it starts from a constant graphon and ends as a graphon with an atypically large density of $d$-regular graphs. Section~\ref{S2.2} identifies the mostly likely path between two given graphons. In these applications, the LDPs presented in Section~\ref{S1} come to life.


\subsection{Application 1}
\label{S2.1}

Suppose that the initial graphon $u$ is constant, i.e., $u \equiv c$ for some $c \in [0,1]$. Condition on the event that at time $T>0$ the density of $d$-regular graphs in $G_n(T)$ is at least $r^d$, where $r$ corresponds to an atypically large edge density compared to $u$, i.e.,
\begin{equation}\label{ULT}
r > c\,\pbb[T] + (1-c) \,\pab[T].
\end{equation}
A natural question is the following. Is the graph $G_n(T)$ conditional on this event close in the cut distance to a typical outcome of an ERRG with edge probability $r$? Phrased differently, are the additional $d$-regular graphs formed by extra edges  (i) sprinkled uniformly, or (ii) arranged in some special structure? 


\subsubsection{Phase transition}

The next theorem, which can be thought of as the dynamic equivalent of \cite[Thm.\ 1.1]{LZ}, answers the above questions when the initial graphon is constant. 

\begin{theorem}{\bf [Phase transition]}
\label{LZlemtv}
Fix a constant initial graphon $u$. Let $H$ be a $d$-regular graph for some $d \in \mathbb{N}\setminus \{1\}$, and $e(H)$ the number of edges in the graph $H$. Suppose that $\delta_\square(\tilde f_{n,0}, \tilde u) \to 0$ and that $r$ satisfies \eqref{ULT}.
\begin{itemize}
\item[\rm (i)] 
If the point $(r^d, I_{1,T}(u,r))$ lies on the convex minorant of $x \mapsto I_{1,T}(u,x^{1/d})$, then
\begin{equation}
\lim_{n \to \infty} \frac{1}{{n \choose 2}} \log \mathbb{P}\left(t(H,f_{n,T}) \geq r^{e(H)}\right) = -I_{1,T}(u,r),
\end{equation}
and for every $\varepsilon >0$ there exists a $C>0$ such that
\begin{equation}
\label{dist1}
\mathbb{P}\left(\delta_\square(f_{n,T},r) < \varepsilon ~\Big|~ t(H,f_{n,T}) \geq r^{e(H)} \right) \geq 1- \eee^{-Cn^2},
\quad n \in \mathbb{N}.
\end{equation}
\item[\rm (ii)] 
If the point $(r^d, I_{1,T}(u,r))$ does not lie on the convex minorant of $x \mapsto I_{1,T}(u,x^{1/d})$, then
\begin{equation}
\lim_{n \to \infty} \frac{1}{{n \choose 2}} \log \mathbb{P}\left(t(H,f_{n,T}) \geq r^{e(H)}\right) > -I_{1,T}(u,r),
\end{equation}
and there exist $\varepsilon, C>0$ such that
\begin{equation}
\label{dist2}
\mathbb{P}\left(\inf_{s \in [0,1]} \delta_\square(f_{n,T},s)
> \varepsilon ~\Big|~ t(H,f_{n,T}) \geq r^{e(H)} \right) \geq 1- \eee^{-Cn^2}, \quad n \in \mathbb{N}.
\end{equation}
\end{itemize}
\end{theorem}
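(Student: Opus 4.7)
The plan is to invoke the two-point LDP (Theorem~\ref{tTwoLDP}) to convert the statement into a variational problem on graphons, then to exploit the $d$-regularity of $H$ via a H\"older inequality in order to reduce it to a one-dimensional convex-analytic problem whose solution is the convex minorant value at $r^d$. Continuity of $\tilde h \mapsto t(H, \tilde h)$ in $\delta_\square$ makes $\{\tilde h : t(H, \tilde h) \geq r^{e(H)}\}$ a closed set, with a matching open relaxation $\{t(H, \tilde h) > r^{e(H)}\}$ whose infimum of $\tilde I_{1,T}$ agrees with that of the closed set after a standard perturbation of boundary minimizers, so Theorem~\ref{tTwoLDP} yields
\begin{equation*}
\lim_{n\to\infty} \frac{1}{\binom{n}{2}} \log \mathbb{P}\bigl(t(H,f_{n,T}) \geq r^{e(H)}\bigr) = -\varphi,
\qquad \varphi := \inf_{\tilde h:\, t(H,\tilde h) \geq r^{e(H)}} \tilde I_{1,T}(\tilde u, \tilde h).
\end{equation*}
Because $u$ is constant, say $u \equiv c$, the rate function $\tilde I_{1,T}(\tilde u, \tilde h) = \int_{[0,1]^2} I_{1,T}(c, h(x,y))\,\ddd x\,\ddd y$ is invariant under the choice of representative.

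The key reduction rests on the classical inequality $t(H, h) \leq (\int h^d)^{e(H)/d}$ valid for $d$-regular $H$ (strict unless $h$ is a.e.\ constant); hence $t(H, h) \geq r^{e(H)}$ forces $\int h^d \geq r^d$. Writing $\phi(x) := I_{1,T}(u, x^{1/d})$ and denoting by $\phi^{**}$ its convex minorant on $[0, 1]$, the chain
\begin{equation*}
\int \phi(h^d) \geq \int \phi^{**}(h^d) \geq \phi^{**}\Bigl(\int h^d\Bigr) \geq \phi^{**}(r^d)
\end{equation*}
holds by, respectively, the pointwise bound $\phi \geq \phi^{**}$, Jensen's inequality for the convex function $\phi^{**}$, and monotonicity of $\phi^{**}$ to the right of the minimum of $I_{1,T}(c,\cdot)$, which lies strictly to the left of $r$ by \eqref{ULT}. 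Thus $\varphi \geq \phi^{**}(r^d)$.

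In case (i), $\phi^{**}(r^d) = \phi(r^d) = I_{1,T}(u, r)$, and the constant test graphon $h \equiv r$ saturates each inequality, giving $\varphi = I_{1,T}(u, r)$. The equality cases in the chain above (tightness of Jensen combined with strict monotonicity of $\phi^{**}$ at $r^d$) force the minimizer set to be $\{\tilde h \equiv r\}$, so the standard LDP upper bound applied to $\{t(H, \tilde h) \geq r^{e(H)}\} \setminus \{\delta_\square(\cdot, r) < \varepsilon\}$, whose infimum of $\tilde I_{1,T}$ is strictly larger than $\varphi$ by compactness of level sets, delivers \eqref{dist1}.

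In case (ii), $\phi^{**}$ is affine on a maximal interval $[a, b] \ni r^d$ with $\phi^{**}(a) = \phi(a)$, $\phi^{**}(b) = \phi(b)$, and $\phi^{**}(r^d) < \phi(r^d)$. To prove $\varphi < I_{1,T}(u, r)$ I would exhibit, in the spirit of \cite{LZ}, a two-valued block graphon $h = b^{1/d}\mathbf{1}_{S\times S} + a^{1/d}\mathbf{1}_{(S\times S)^c}$, with $|S|$ slightly enlarged above $\sqrt{(r^d - a)/(b - a)}$ so as to close the strict H\"older gap and enforce $t(H, h) \geq r^{e(H)}$, while keeping the cost at $\phi^{**}(r^d) + O(\delta) < I_{1,T}(u, r)$ for the corresponding boost $\delta$; the gap $I_{1,T}(u,r) - \phi^{**}(r^d) > 0$ leaves room for the inflation. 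For \eqref{dist2}, observe that any constant $s$ in the constraint set satisfies $t(H, s) = s^{e(H)} \geq r^{e(H)}$, hence $s \geq r$ and cost $I_{1,T}(u, s) \geq I_{1,T}(u, r) > \varphi$; the minimizer set is therefore bounded away from constants in cut metric, and the LDP upper bound for the closed set of graphons at cut distance at least $\varepsilon$ from all constants gives the exponential bound. \emph{The main obstacle} is the quantitative upper-bound construction in case (ii): one must simultaneously satisfy $t(H, h) \geq r^{e(H)}$---in direct tension with H\"older strictness for non-constant $h$---and the cost bound $<I_{1,T}(u,r)$; calibrating the block size together with the boost $\delta$ requires a careful analysis of how $t(H, \cdot)$ depends on the two-block structure for a given $d$-regular $H$.
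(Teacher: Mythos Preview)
Your proposal is correct and follows essentially the same route as the paper: the LDP reduces the question to the variational problem $\varphi = \inf\{I_{1,T}(u,h): t(H,h)\geq r^{e(H)}\}$ (the paper packages this step as a separate lemma, Lemma~\ref{VarProb}, proved by the arguments of \cite[Theorems~6.1--6.2]{C}), case~(i) is handled by the generalized H\"older inequality $t(H,h)\leq\|h\|_d^{e(H)}$ of \cite{F} together with Jensen for the convex minorant, and case~(ii) by the two-block construction of \cite[Lemma~3.4]{LZ}. The ``main obstacle'' you flag---calibrating the block size against the H\"older gap---is exactly the content of that lemma, and the paper simply cites it rather than reproducing the argument.
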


\noindent
In \eqref{dist1} and \eqref{dist2} the $\delta_\square$-distance is towards the constant graphons $r$ and $s$, respectively.  We say that $G_n(T)$ is in the
\begin{itemize} 
\item
\emph{symmetric phase} (S) when the condition of Theorem \ref{LZlemtv}(i) holds, 
\item
\emph{symmetry breaking phase} (SB) when the condition of Theorem \ref{LZlemtv}(ii) hols. 
\end{itemize}

We next explore some consequences of Theorem \ref{LZlemtv}. To avoid redundancy we set $\mu=1$ and put
\begin{equation}
p^* = \lambda/(1+\lambda),
\end{equation}
so that $\lambda=p^*/(1-p^*)$. Note that $p^*$ is the stationary probability that an edge is active. The following two propositions provide a partial phase classification.  

\begin{proposition}{\bf [Short-time SB]}
\label{Pts}
If $u < r$, then for $T$ sufficiently small $G_n(T)$ is SB.
\end{proposition}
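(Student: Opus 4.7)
The plan is to compare, for $T\downarrow 0$, the short-time asymptotics of $w\mapsto I_{1,T}(c,w)$ against the geometry of the map $w\mapsto w^d$. As $T\downarrow 0$ one has $p_{01,T}\sim \lambda T$ and $1-p_{11,T}\sim \mu T$, so the rate function $I_{1,T}(c,\cdot)$ becomes very steep away from $c$ and, to leading order, affine in $w-c$. Consequently $\phi(x):=I_{1,T}(c,x^{1/d})$ is essentially a positive multiple of the strictly concave map $x\mapsto x^{1/d}-c$, which will force the point $(r^d,\phi(r^d))$ with $r\in(c,1)$ to sit strictly above the chord from $(c^d,\phi(c^d))$ to $(1,\phi(1))$, and hence strictly above the convex minorant of $\phi$.

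Concretely, resolving the dynamics into two independent Bernoulli pools (initially-inactive and initially-active edges) and applying Cram\'er plus the contraction principle yields
\begin{equation*}
I_{1,T}(c,w) \;=\; \inf_{\substack{q_0,q_1\in[0,1]\\ c q_1+(1-c)q_0=w}}\Big[c\,\mathcal{R}\bigl(q_1\,\big|\,p_{11,T}\bigr)+(1-c)\,\mathcal{R}\bigl(q_0\,\big|\,p_{01,T}\bigr)\Big].
\end{equation*}
From this I would derive the three asymptotics $I_{1,T}(c,c)=O(T)$, $I_{1,T}(c,1)=(1-c)\log(1/T)+O(1)$, and $I_{1,T}(c,r)=(r-c)\log(1/T)+O(1)$ as $T\downarrow 0$. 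The upper bound on $I_{1,T}(c,r)$ comes from the feasible pair $q_1=1$, $q_0=(r-c)/(1-c)$; the matching lower bound comes from the Legendre dual \eqref{ef} evaluated at the tilt $v=\log(1/T)$, for which a direct computation gives $J_{T,v}(c)=c\log(1/T)+O(1)$. With $\alpha=(1-r^d)/(1-c^d)$ the chord comparison then reads
\begin{equation*}
\phi(r^d)-\bigl[\alpha\phi(c^d)+(1-\alpha)\phi(1)\bigr] \;=\; (1-c)\log(1/T)\left[\frac{r-c}{1-c}-\frac{r^d-c^d}{1-c^d}\right]+O(1).
\end{equation*}

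For $d\geq 2$ and $c<r<1$ the bracket is strictly positive: the function $h(r):=(r-c)/(1-c)-(r^d-c^d)/(1-c^d)$ vanishes at $r=c$ and $r=1$, satisfies $h''(r)=-d(d-1)r^{d-2}/(1-c^d)\leq 0$, and has $h'(c)>0$ because $\sum_{k=0}^{d-1}c^k>dc^{d-1}$ when $c<1$; concavity plus positive initial slope force $h>0$ on $(c,1)$. Hence for $T$ sufficiently small the displayed difference is strictly positive, $(r^d,\phi(r^d))$ lies strictly above the convex minorant of $\phi$, and by Theorem~\ref{LZlemtv}(ii) the graph $G_n(T)$ is SB. The main obstacle will be extracting the matching lower bound on $I_{1,T}(c,r)$: one must verify that the tilt $v=\log(1/T)$ is asymptotically near-optimal and that the subleading corrections are genuinely $O(1)$, uniformly as $T\downarrow 0$. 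Everything downstream of this asymptotic is elementary.
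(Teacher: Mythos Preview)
Your proposal is correct and follows essentially the same route as the paper: derive the short-time asymptotic $I_{1,T}(c,w)=(w-c)\log(1/T)+O(1)$ for $w>c$, then use the concavity of $x\mapsto x^{1/d}$ together with Theorem~\ref{LZlemtv}(ii). The only cosmetic difference is that the paper extracts this asymptotic by explicitly solving the quadratic for the optimal tilt $e^v$, whereas you sandwich $I_{1,T}$ between a feasible point in the contraction representation and the Legendre dual evaluated at $v=\log(1/T)$; your explicit chord computation also spells out what the paper dispatches in one line, but none of these differences are substantive.
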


\begin{proposition}{\bf [Monotonicity]}
\label{Pht}
\begin{itemize}
\item[{\rm (i)}] If $u = 0$ and $G_n(T)$ is S, then $G_n(T')$ is S for all $T'>T$.
\item[{\rm (ii)}] If $u = 1$ and $G_n(T)$ is SB, then $G_n(T')$ is SB for all $T'>T$.
\end{itemize}
\end{proposition}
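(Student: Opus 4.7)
The plan is to reduce the convex-minorant condition to a one-parameter family of Bernoulli relative entropies and then track the dependence on $T$ through a single algebraic identity. When $u \in \{0, 1\}$ is constant the supremum in \eqref{ef} evaluates in closed form via \eqref{ee} to give
\begin{equation*}
I_{1,T}(0, w) = \mathcal{R}(w \mid \pab[T]), \qquad I_{1,T}(1, w) = \mathcal{R}(w \mid \pbb[T]),
\end{equation*}
with $\mathcal{R}$ the Bernoulli relative entropy of \eqref{Rdef}. Decomposing $\mathcal{R}(y \mid p) = \psi(y) + \beta(p)\,y + \gamma(p)$ with the \emph{$p$-independent} shape $\psi(y) := y\log y + (1-y)\log(1-y)$, $\beta(p) := \log\tfrac{1-p}{p}$ strictly decreasing in $p \in (0,1)$, and $\gamma(p) := -\log(1-p)$, and setting $p(T) := \pab[T]$ if $u=0$, $p(T) := \pbb[T]$ if $u=1$, and $g_T(x) := I_{1,T}(u, x^{1/d})$ on $[0,1]$, this yields the key identity
\begin{equation*}
g_{T'}(x) - g_T(x) = [\beta(p') - \beta(p)]\,x^{1/d} + [\gamma(p') - \gamma(p)], \qquad p := p(T),\; p' := p(T').
\end{equation*}

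By Carath\'eodory in the epigraph of $g_T$ over $\mathbb{R}$, the point $(r^d, g_T(r^d))$ lies on the convex minorant of $g_T$ iff $\alpha g_T(s_1^d) + (1-\alpha) g_T(s_2^d) \geq g_T(r^d)$ for every $\alpha \in [0,1]$ and $s_1, s_2 \in [0,1]$ with $\alpha s_1^d + (1-\alpha) s_2^d = r^d$. Substituting the identity above into this criterion — the $\gamma$-terms cancel because $\alpha + (1-\alpha) = 1$ — yields
\begin{equation*}
\alpha g_{T'}(s_1^d) + (1-\alpha) g_{T'}(s_2^d) - g_{T'}(r^d) = \big[\alpha g_T(s_1^d) + (1-\alpha) g_T(s_2^d) - g_T(r^d)\big] + [\beta(p') - \beta(p)]\,\big[\alpha s_1 + (1-\alpha) s_2 - r\big].
\end{equation*}
The power-mean inequality (strict since $d \geq 2$) gives $\alpha s_1 + (1-\alpha) s_2 \leq r$, with equality iff $s_1 = s_2 = r$. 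From \eqref{ptdefs}, $\pab[\cdot]$ is strictly increasing while $\pbb[\cdot]$ is strictly decreasing, so by strict monotonicity of $\beta$ the correction term above is $\geq 0$ for $u = 0$ and $T' > T$, and $\leq 0$ for $u = 1$ and $T' > T$ (strict for nontrivial $(s_1, s_2)$).

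Both parts then drop out. For (i), if $G_n(T)$ is S the first bracket on the right is $\geq 0$ for every admissible triple, and so is the correction, hence $G_n(T')$ is S. For (ii), a triple witnessing SB at $T$ makes the first bracket strictly negative while the correction is nonpositive on it, so the right-hand side is $< 0$ and $G_n(T')$ is SB. The main obstacle is bookkeeping in the first step: correctly isolating the $p$-independent shape $\psi$ from the $p$-dependent affine part of $\mathcal{R}(\cdot \mid p)$, which is precisely what makes the two-time comparison collapse to the one-sided power-mean inequality. The closed-form Legendre transform leading to $\mathcal{R}$ is routine, since for $u \in \{0,1\}$ the moment generating function in \eqref{ee} reduces to that of a single Bernoulli.
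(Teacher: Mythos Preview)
Your proof is correct and rests on the same structural observation as the paper: for $u\in\{0,1\}$ the rate function $I_{1,T}(u,\cdot)$ is a Bernoulli relative entropy $\mathcal{R}(\cdot\mid p(T))$, and the entire $T$-dependence of $g_T(x)=I_{1,T}(u,x^{1/d})$ sits in a term that is \emph{affine in} $x^{1/d}$.

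The paper extracts this by computing $\partial_x^2 g_T(x)$ and noting that it is monotone in $p(T)$, so that $g_{T'}-g_T$ is convex; the preservation of the convex-minorant condition is then left as an implicit consequence. You instead write down the explicit identity $g_{T'}(x)-g_T(x)=[\beta(p')-\beta(p)]\,x^{1/d}+\text{const}$, insert it into the Carath\'eodory characterisation, and close with the power-mean inequality $\alpha s_1+(1-\alpha)s_2\le (\alpha s_1^d+(1-\alpha)s_2^d)^{1/d}=r$. This is the same convexity statement (a negative multiple of the concave $x^{1/d}$ is convex), but your route is derivative-free, handles general $d\ge 2$ without modification, and makes the final implication fully explicit rather than tacit. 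Both arguments collapse in exactly the same way for $u=1$ via the opposite monotonicity of $p_{11,T}$.
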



\subsubsection{Numerics}

A natural choice of the constant initial graphon is $u = p^*$, i.e., the dynamics starts at a typical outcome of its stationary state. Figure~\ref{Fill} illustrates the consequences of varying $T$ for the case where $d=2$ (i.e., triangles). For large $T$ and $p^*=\frac{1}{7}, \frac{1}{8}$, $G_n(T)$ is S for all $r \in [0,1]$, while for large $T$ and $p^*=\frac{1}{9}, \frac{1}{10}$ there exists $r$ such that $G_n(T)$ is SB. To understand why, observe that, for large $T$, $G_n(T)$ behaves like an Erd\H{o}s-R\'enyi random graph with edge probability $p^*$. According to \cite[Theorem 1.1]{LZ}, if $\mathrm{ERRG}_n(p^*)$ is an  Erd\H{o}s-R\'enyi random graph with edge probability $p^*$, then it is S for all $r \in [0,1]$ if and only if
\begin{equation}
p^* \geq (e^2+1)^{-1}.
\end{equation} 

\begin{figure}[!htb]
\includegraphics[scale=0.2]{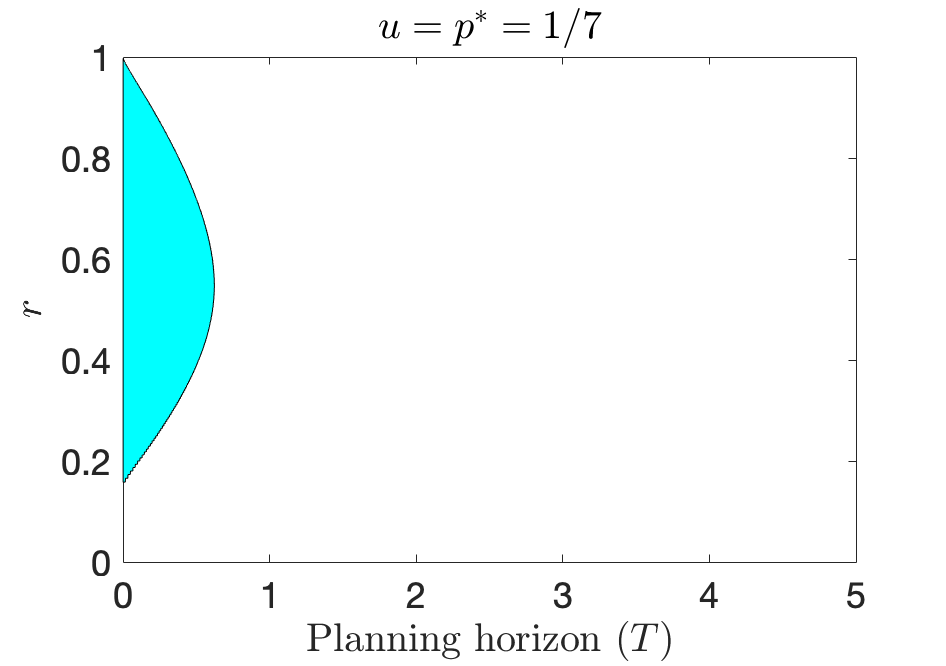}
\includegraphics[scale=0.2]{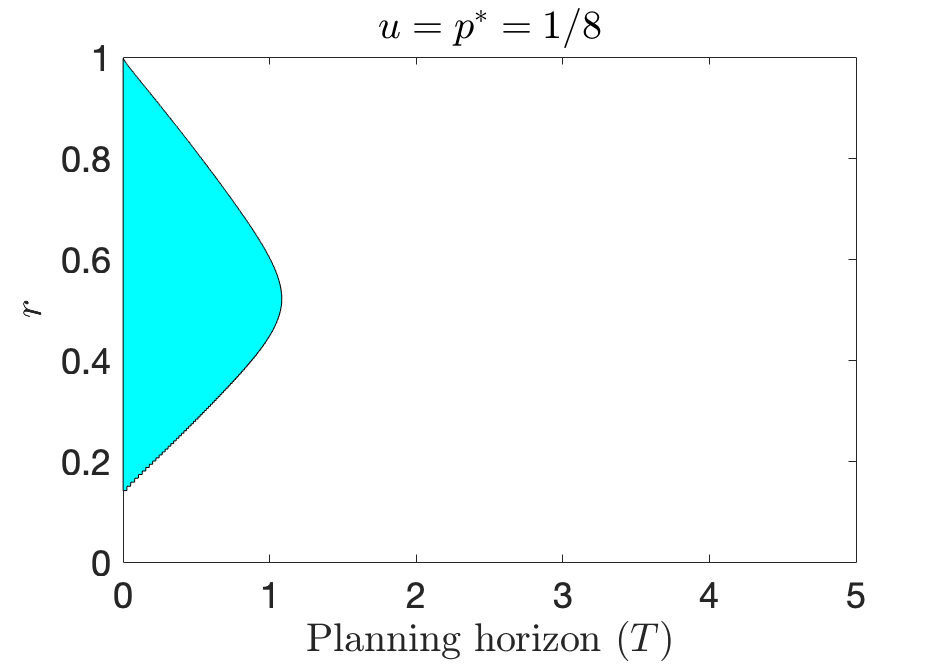}
\includegraphics[scale=0.2]{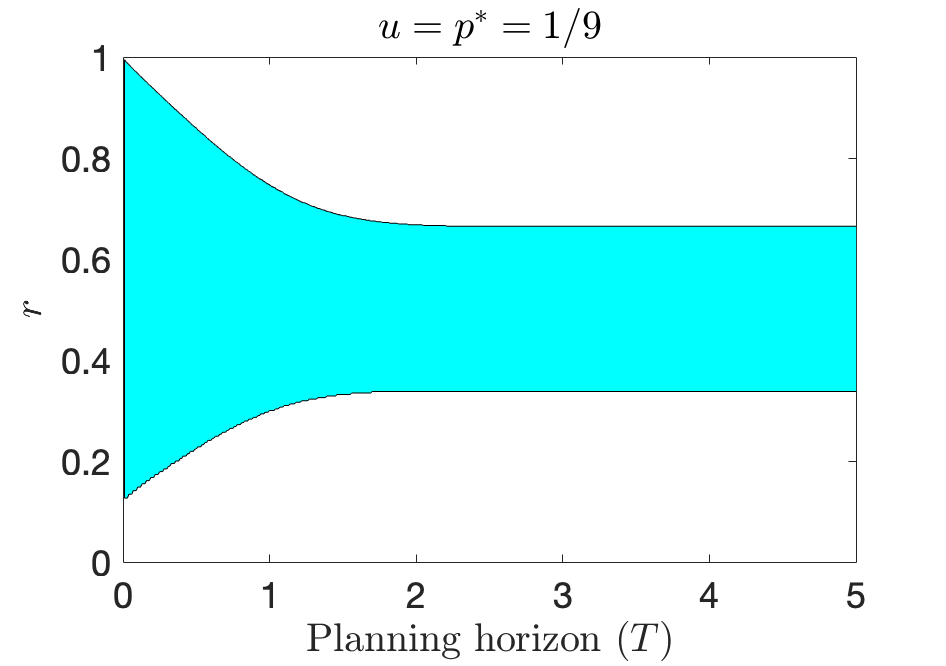}
\includegraphics[scale=0.2]{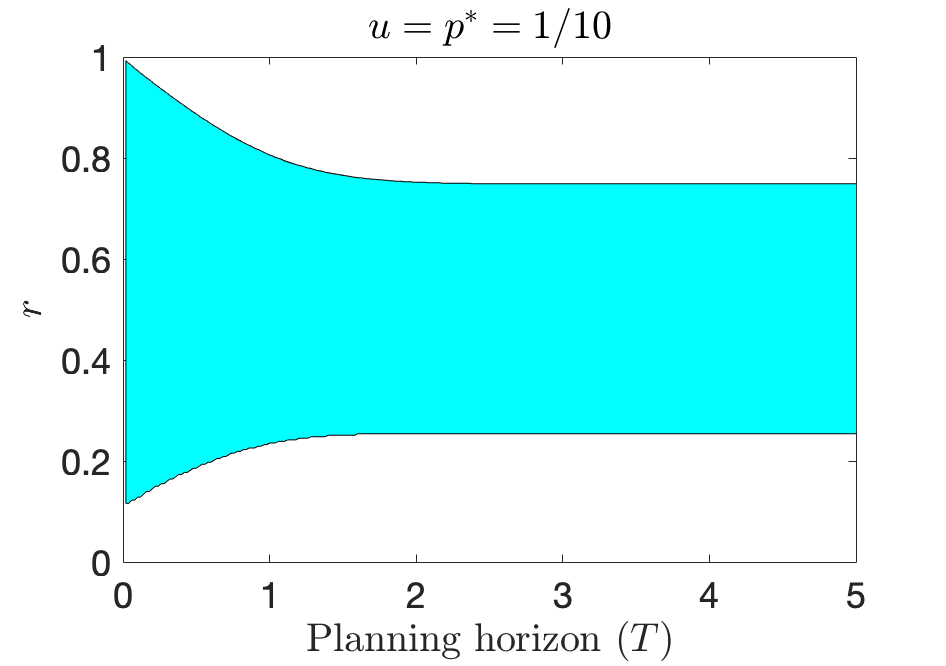}
\caption{Phase diagrams in $(T,r)$ for $d=2$ and $u \equiv p^*$ with $p^* = \frac{1}{7},\frac{1}{8},\frac{1}{9},\frac{1}{10}$. The shaded region corresponds to SB, the unshaded region to S. Observe that SB prevails for small $T$ and $r>u$.}
\label{Fill}
\end{figure}

A visual inspection of Figure \ref{Fill} indicates that, as the planning horizon $T$ increases, $G_n(T)$ can transition from SB to S. An informal explanation is the following. For small $T$ it is more costly to add extra edges than for large $T$. Hence, for small $T$ we expect to see graphs where the extra triangles are formed through the addition of a small number of extra edges arranged in a special structure (corresponding to SB), rather than through the addition of a large number of extra edges sprinkled uniformly (corresponding to S).

\begin{figure}[!htb]
\includegraphics[scale=0.26]{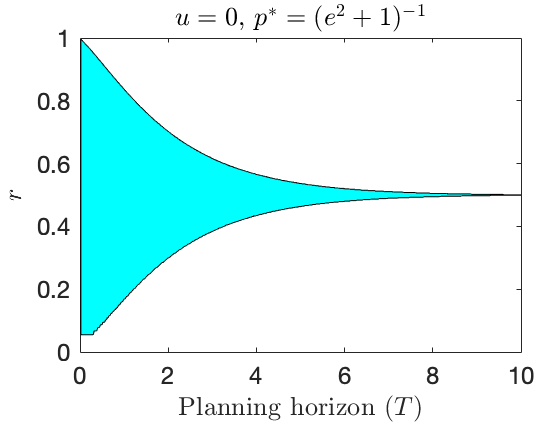}
\includegraphics[scale=0.26]{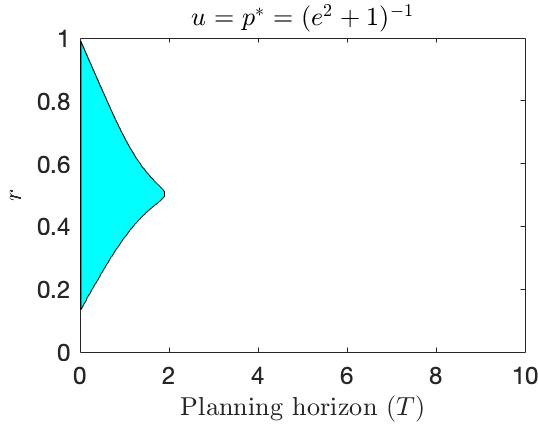}
\includegraphics[scale=0.26]{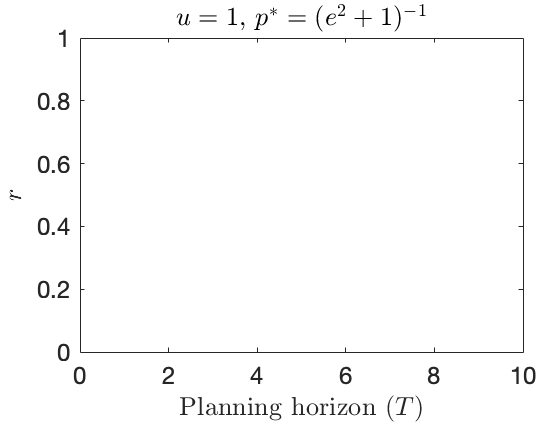}
\includegraphics[scale=0.26]{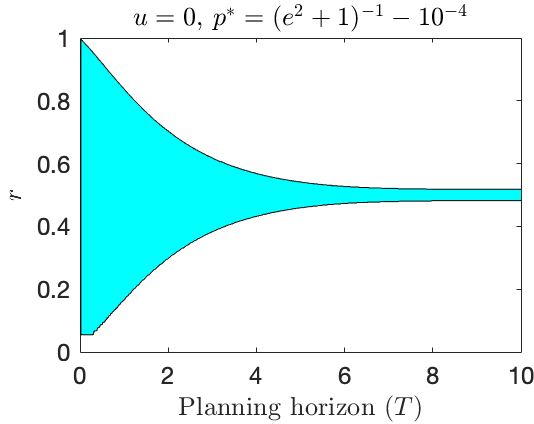}
\includegraphics[scale=0.26]{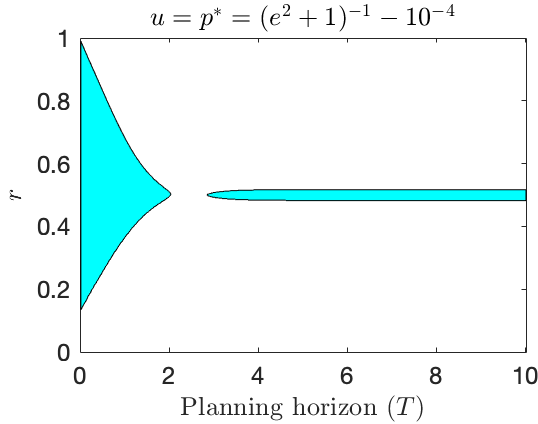}
\includegraphics[scale=0.26]{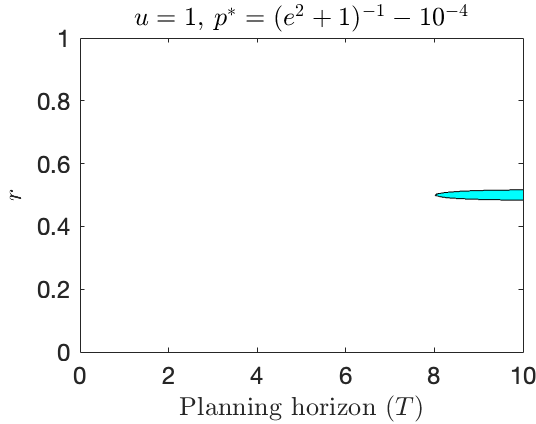}
\includegraphics[scale=0.26]{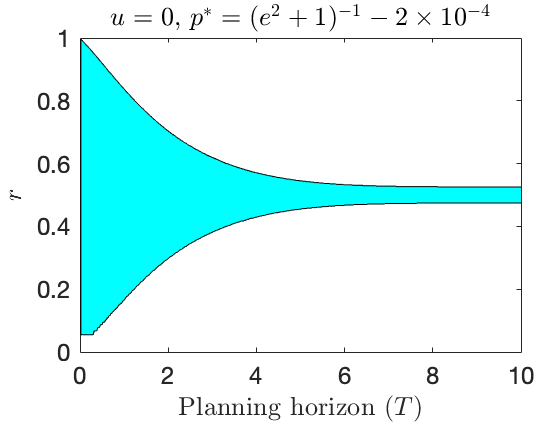}
\includegraphics[scale=0.26]{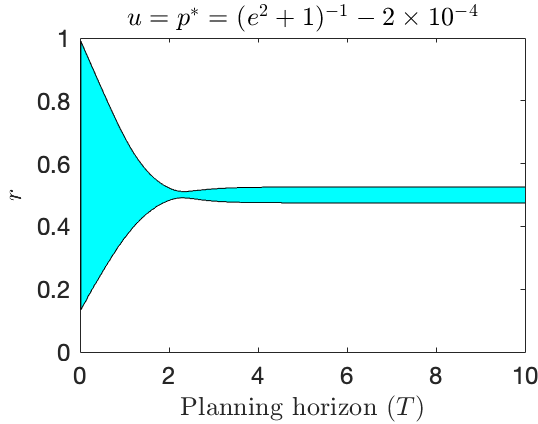}
\includegraphics[scale=0.26]{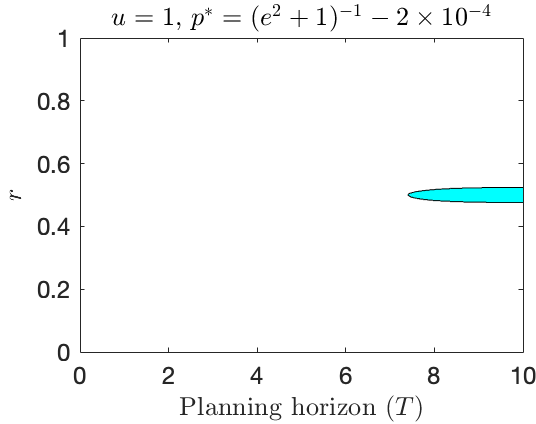}
\caption{Phase diagrams in $(T,r)$ for $u=0,p^*,1$ with $p^*$ close to $(e^2+1)^{-1}$. The shaded region corresponds to SB, the unshaded region to S.}
\label{Filleps}
\end{figure}

Because of the lack of structural results, we numerically consider additional values of $p^*$, namely, near the critical value $(e^2+1)^{-1}$. In Figure \ref{Filleps} we pick $u=0$ (left column), $u=p^*$ (center column), and $u=1$ (right column), and $p^*=(e^2+1)^{-1}$ (top row), $p^* = (e^2+1)^{-1} -10^{-4}$ (middle row), and $p^*=(e^2+1)^{-1}-2 \times 10^{-4}$ (bottom row). Note that, in line with Proposition \ref{Pht}, for $u=0$ or $u=1$ we observe at most one  phase transition in the planning horizon $T$: from SB to S when $u=0$ and from S to SB when $u=1$. However, this is not so when $u=p^*$: when $u=p^*=(e^2+1)^{-1}-10^{-4}$ and $u=p^*=(e^2+1)^{-1}-2 \times 10^{-4}$, there are values of $r$ such that, as $T$ increases, $G_n(T)$ transitions from SB to S and back from S to SB. In other words, two phase transitions occur in the planning horizon $T$, i.e., a \emph{re-entrant phase transition} is observed. 

The re-entrant phase transition in $T$ is quite distinct from the re-entrant phase transition in $r$ (which was first observed in \cite{CV} and is evident from Figures \ref{Fill} and \ref{Filleps}). It is difficult to find a probabilistic explanation for the re-entrant phase transition in $T$. However, once we observe that, when $u=0$, $G_n(T)$ can transition from SB to S and, when $u=1$, $G_n(T)$ can transition from S to SB, then it is plausible that both are possible when we consider the intermediate value $u=p^*$. Moreover, in the light of Proposition \ref{Pts}, when $u=p^*$, $G_n(T)$ can only transition from S to SB \emph{after} it has transitioned from SB to S.

      
\subsection{Application 2} 
\label{S2.2}  

Suppose that the graphon valued process starts near a graphon $\tilde u$ at time $0$, and is conditioned to end near another graphon $\tilde r$ at time $T$. A natural question is the following. Is the most likely path necessarily unique, or is it possible that there are multiple most likely paths? 


\subsubsection{Optimal paths}

The next theorem shows that we can answer this question by studying the set $\tilde H^* \subseteq \tilde{\mathscr{W}}\times [0,T]$ of paths that minimise $\tilde I$ subject to the condition that the path starts at $\tilde u$ and ends at $\tilde r$.
For $\eta>0$, put 
\begin{equation}
\tilde H_\eta := \{ \tilde h \in \tilde{\mathscr{W}} \times [0,T]\colon\, \delta_\square(\tilde h_T, \tilde r) \leq \eta \},
\end{equation}
and for $\tilde h, \tilde h' \in \tilde{\mathscr{W}} \times [0,T]$ define
\begin{equation}
\delta^\infty_\square(\tilde h, \tilde h') := \sup_{t \in [0,T]} \delta_\square(\tilde h_t,\tilde h'_t).
\end{equation} 

\begin{theorem}{\bf [Optimal paths]}
\label{SPCond}
Let $\tilde H \subseteq \tilde{\mathscr{W}} \times [0,T]$ be the set of paths starting at $\tilde u$ and ending at $\tilde r$. Let $\tilde H^* \subseteq \tilde H$ be the set of minimisers of $\tilde I$ in $\tilde H$. Then $\tilde H^*$ is non-empty and compact. In addition, if $\lim_{n\to\infty} \delta_\square(\tilde f_{n,0} , \tilde u) = 0$, then
\begin{equation}
\lim_{\eta \downarrow 0} \limsup_{n \to \infty} \frac{1}{{n \choose 2}} \log 
\mathbb{P}\big(\delta^\infty_\square(\tilde f_n, \tilde H^*\big) \geq \varepsilon \mid  \tilde f_n \in \tilde H_{\eta}) \leq - C,
\end{equation}
where $C>0$ is a constant that depends on $\tilde u$, $\tilde r$, $T$ and $\varepsilon$.
\end{theorem}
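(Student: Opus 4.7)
The proof is a fairly standard application of the sample-path LDP in Theorem \ref{GSPLDP} together with the compactness properties of the good rate function $\tilde I$. First I would dispose of the existence statement. Since $\tilde I$ is lower semi-continuous with compact level sets, and $\tilde H$ (the set of equivalence classes of paths from $\tilde u$ to $\tilde r$) is closed under the natural topology, it suffices to exhibit one admissible path of finite action to conclude that $M := \inf_{\tilde H} \tilde I < \infty$. The linear interpolation $h_t := (1 - t/T)\,u + (t/T)\,r$ does the job under \eqref{Assbasic}, with a truncation/approximation step if $\tilde r$ takes the values $0$ or $1$ on positive-measure sets. Then $\tilde H^* = \tilde H \cap \{\tilde I \leq M\}$ is the intersection of a closed set with a compact one, hence non-empty and compact.

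Next, set $\tilde A_{\eta,\varepsilon} := \tilde H_\eta \cap \{\tilde h : \delta^\infty_\square(\tilde h, \tilde H^*) \geq \varepsilon\}$ and write the conditional probability as the ratio $\mathbb{P}(\tilde f_n \in \tilde A_{\eta,\varepsilon}) / \mathbb{P}(\tilde f_n \in \tilde H_\eta)$. Applying the upper bound of Theorem \ref{GSPLDP} to (the closure of) $\tilde A_{\eta,\varepsilon}$ and the lower bound to an open neighbourhood of a minimiser $\tilde h^* \in \tilde H^*$ — perturbed slightly near $T$ so that its endpoint lies strictly inside $\{\delta_\square(\cdot,\tilde r) < \eta\}$ at arbitrarily small extra action cost — yields
\begin{equation}
\limsup_{n \to \infty} \frac{1}{\binom{n}{2}} \log \mathbb{P}\bigl(\delta^\infty_\square(\tilde f_n, \tilde H^*) \geq \varepsilon \bigm| \tilde f_n \in \tilde H_\eta\bigr)
\leq M + o_\eta(1) - \inf_{\tilde A_{\eta,\varepsilon}} \tilde I.
\end{equation}
The point is that every path in a level set of $\tilde I$ lies in $\mathcal{AC}$ and is in particular continuous in $t$, so Skorohod convergence on such sets coincides with uniform convergence; this makes endpoint evaluation and the map $\tilde h \mapsto \delta^\infty_\square(\tilde h, \tilde H^*)$ continuous where needed.

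The crux is then to prove $\liminf_{\eta \downarrow 0} \inf_{\tilde A_{\eta,\varepsilon}} \tilde I > M$, which I would establish by contradiction. If the $\liminf$ equalled $M$, pick $\eta_k \downarrow 0$ and $\tilde h^{(k)} \in \tilde A_{\eta_k, \varepsilon}$ with $\tilde I(\tilde h^{(k)}) \to M$. By goodness of $\tilde I$, after passing to a subsequence $\tilde h^{(k)} \to \tilde h^\infty$; lower semi-continuity gives $\tilde I(\tilde h^\infty) \leq M$; the constraint $\delta_\square(\tilde h^{(k)}_T, \tilde r) \leq \eta_k$ forces $\tilde h^\infty_T = \tilde r$; so $\tilde h^\infty \in \tilde H^*$. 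But uniform convergence on the level set then gives $\delta^\infty_\square(\tilde h^{(k)}, \tilde h^\infty) \to 0$, contradicting $\delta^\infty_\square(\tilde h^{(k)}, \tilde H^*) \geq \varepsilon$. Setting $C$ equal to (a fraction of) the resulting positive gap completes the proof.

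The main obstacle I anticipate is the careful handling of the Skorohod topology on $\tilde{\mathscr{W}}$-valued paths modulo the equivalence \eqref{PathEDef}: verifying that $\tilde A_{\eta,\varepsilon}$ (or its closure) may be plugged into the upper bound with a matching infimum, and that $\tilde H_\eta$ contains a genuine open neighbourhood of some minimiser on which the lower bound can be applied. The restriction of $\tilde I$ to its level sets lives in continuous paths, which is what makes these topological issues ultimately tractable, but they must be phrased carefully within the full Skorohod space in order to invoke Theorem \ref{GSPLDP} directly.
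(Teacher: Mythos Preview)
Your proposal is correct and follows essentially the same route as the paper: exhibit a finite-action admissible path, deduce non-emptiness and compactness of $\tilde H^*$ from lower semi-continuity and compact level sets of $\tilde I$, split the conditional probability and apply both halves of the sample-path LDP, then obtain the strictly positive gap by a compactness/contradiction argument. The only noteworthy differences are cosmetic: the paper uses the explicit path $h^*_{u\to r}$ from Lemma~\ref{WTpath} (with the uniform bound $I(h^*_{u\to r})\le K:=\max\{-\log p_{01,T},-\log(1-p_{11,T})\}$) rather than a linear interpolation, and it runs the contradiction at $\eta=0$ on the compact set $\tilde H^{(K)}_0$ rather than along a sequence $\eta_k\downarrow 0$; your version is arguably more explicit about the denominator lower bound and the Skorohod/uniform topology issues, which the paper handles tersely.
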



\subsubsection{Variational problems}

To better understand the set $\tilde H^*$, we next solve two related variational problems, each with its own probabilistic interpretation.

\begin{lemma}{\bf [Identification of minimiser]}
\label{SPQOpt}
Pick $u \in [0,r]$. Let 
\begin{equation}
f^*_{u \to r}(t) := \argmin_{s \in [0,1]} [I_{1,t}(u,s) + I_{1,T-t}(s,r) ], \qquad t \in [0,T].
\end{equation}
Then $f^*_{u \to r}$ is the unique minimiser of 
\begin{equation}
I(f) = \int_0^T \ddd t\, \mathcal{L}( f(t), f'(t)),
\end{equation}
subject to the condition $f(0)=u$ and $f(T)=r$, where $\mathcal{L}$ is defined in \eqref{LDef}. In addition, $I(f^*_{u \to r})=I_{1,T}(u,r)$.
\end{lemma}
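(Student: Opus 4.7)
The plan is to exploit the Markov/semigroup structure underlying the two-point rate function $I_{1,t}$, which reduces the infinite-dimensional variational problem for $I$ to a pointwise one-dimensional minimisation at each time $t$. The constraint $u\in[0,r]$ will ensure monotonicity of the candidate path, keeping it inside $[0,1]$ and avoiding boundary issues.

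\textbf{Step 1 (Semigroup identity for $I_{1,t}$).} First I would establish the chain-rule decomposition
\begin{equation*}
I_{1,T}(u,r)=\min_{s\in[0,1]}\bigl[I_{1,t}(u,s)+I_{1,T-t}(s,r)\bigr],\qquad t\in(0,T),
\end{equation*}
together with the action-integral representation
\begin{equation*}
I_{1,t}(u,s)=\inf\Bigl\{\textstyle\int_0^t\mathcal{L}(f(\tau),f'(\tau))\,\ddd\tau\,:\,f\in\mathcal{AC},\ f(0)=u,\ f(t)=s\Bigr\}.
\end{equation*}
The first is a contraction of the scalar version of Theorem~\ref{tMultiLDP} applied with $j=(0,t,T)$, projecting out the middle coordinate; the second is a contraction of the scalar version of Theorem~\ref{GSPLDP} restricted to paths with prescribed endpoints.

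\textbf{Step 2 (Matching lower bound).} For any $f\in\mathcal{AC}$ with $f(0)=u$ and $f(T)=r$, splitting the action at an arbitrary $t\in(0,T)$ gives
\begin{equation*}
I(f)=\int_0^t\mathcal{L}(f,f')\,\ddd\tau+\int_t^T\mathcal{L}(f,f')\,\ddd\tau\geq I_{1,t}(u,f(t))+I_{1,T-t}(f(t),r)\geq I_{1,T}(u,r),
\end{equation*}
using the action representation of $I_{1,\cdot}$ in both halves and the semigroup identity at the final step.

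\textbf{Step 3 (Strict convexity and pointwise identification).} For fixed $t\in(0,T)$, the cumulant $v\mapsto J_{t,v}(u)$ is strictly convex (it is a sum of two log-sum-exp terms in $v$), so its Legendre transform $s\mapsto I_{1,t}(u,s)$ is strictly convex on its effective domain. An envelope-theorem argument applied to the implicit formula for the optimising $v^*(s)$ displayed after \eqref{ef} yields strict convexity of $s\mapsto I_{1,T-t}(s,r)$ as well. Therefore the map $V(s;t):=I_{1,t}(u,s)+I_{1,T-t}(s,r)$ admits a unique minimiser, which is precisely $f^*_{u\to r}(t)$, and by Step~1 the minimum value equals $I_{1,T}(u,r)$ for every $t$. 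Strict convexity also forces $t\mapsto f^*_{u\to r}(t)$ to be continuous.

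\textbf{Step 4 (Attainment in path space).} To pass from the pointwise argmin to a genuine path minimiser, take a minimising sequence $f_n\in\mathcal{AC}$ for $I$ subject to the endpoint constraints. Combining Step~2 with the uniqueness of the minimiser of $V(\cdot;t)$ in Step~3 forces $f_n(t)\to f^*_{u\to r}(t)$ for every $t\in(0,T)$. Goodness of the rate function in Theorem~\ref{GSPLDP} at the scalar level provides Skorohod precompactness of sub-level sets of $I$, so $f_n$ admits a limit point $\hat f\in\mathcal{AC}$. Lower semicontinuity of $I$ yields $I(\hat f)\leq I_{1,T}(u,r)$, Step~2 forces equality, and the pointwise identification gives $\hat f=f^*_{u\to r}$; in particular $f^*_{u\to r}\in\mathcal{AC}$. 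Uniqueness across the whole admissible set then follows, because any second minimiser would match $f^*_{u\to r}$ pointwise by Steps~2--3. The main obstacle I anticipate is precisely this regularity bridge in Step~4: transferring Skorohod-level compactness of sub-level sets of $I$ back to the pointwise characterisation of $f^*_{u\to r}$, and confirming that the inequalities of Step~2 are simultaneously tight on both $[0,t]$ and $[t,T]$ for a single $f$.
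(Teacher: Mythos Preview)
Your strategy is correct and shares the paper's core ingredients: the semigroup identity $I_{1,T}(u,r)=\min_s[I_{1,t}(u,s)+I_{1,T-t}(s,r)]$ obtained by contraction from the multi-point LDP, and strict convexity of $s\mapsto I_{1,t}(u,s)+I_{1,T-t}(s,r)$ to force uniqueness of the pointwise minimiser. Uniqueness of the path minimiser then follows exactly as you note at the end of Step~4: any minimiser $g$ with $I(g)=I_{1,T}(u,r)$ forces equality throughout the chain in Step~2, hence $g(t)=f^*_{u\to r}(t)$ for every $t$.

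Where you diverge from the paper is in Step~4, and this is precisely the point you flag as problematic. The paper avoids your minimising-sequence/compactness detour entirely by invoking the identification $I=I_{\mathcal X}$ of Lemma~\ref{RateEq}, i.e., the action integral equals the Dawson--G\"artner supremum over finite partitions. Iterating the semigroup identity gives, for every partition $0=t_0<\dots<t_k=T$,
\[
\sum_{i=1}^k I_{1,t_i-t_{i-1}}\bigl(f^*_{u\to r}(t_{i-1}),f^*_{u\to r}(t_i)\bigr)=I_{1,T}(u,r),
\]
so the supremum is $I_{1,T}(u,r)$, and therefore $I(f^*_{u\to r})=I_{1,T}(u,r)$ directly. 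This sidesteps any need to verify $f^*_{u\to r}\in\mathcal{AC}$ via limits or to appeal to Skorohod compactness of sub-level sets. Your route works, but the paper's shortcut through Lemma~\ref{RateEq} is considerably cleaner.

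One small correction in Step~3: the paper's Lemma~\ref{Convexity} only asserts convexity (not strict convexity) of $s\mapsto I_{1,T-t}(s,r)$ in the first argument. Your envelope-theorem claim for strict convexity there is not needed, since strict convexity of $s\mapsto I_{1,t}(u,s)$ alone already makes the sum strictly convex. The paper's uniqueness argument \eqref{PathQuot2} combines the first-order condition at the minimiser with convexity in one argument and strict convexity in the other.
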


\begin{remark}
{\rm Let $\{X_i(t) \}_{t \geq 0}$, $i\in\mathbb{N}$, be independent processes switching between active and inactive, with $\lambda$ the rate of becoming active and  $\mu$ the rate of becoming inactive, as before. Define
\begin{equation}
L_n(t)= \frac{1}{n} \sum_{i=1}^n X_i(t), \qquad \lim_{n\to\infty} L_n(0) = u.
\end{equation}
Then, informally, we can interpret $f^*_{u \to r}$ as the most likely path that $(L_n(t))_{t \in [0,T]}$ takes from $u$ to $r$ when $n$ is large. Such paths $f^*_{u \to r}$ can be efficiently computed (see Lemma \ref{MM1C} below). 
An illustration is given in Figure \ref{QueueSPLDP}: in the right-hand-side the time horizon $T$ is relatively large, so that the least costly way to reach $r$ is by first falling back towards the equilibrium value $\tfrac23$ and afterwards moving towards $r$ in a relatively short time interval before $T$, whereas in the left-hand-side the the time horizon $T$ is relatively small, so that the least costly way to reach $r$ is by immediately moving towards it.}
\end{remark}

\begin{figure}[!htb]
\includegraphics[scale=0.175]{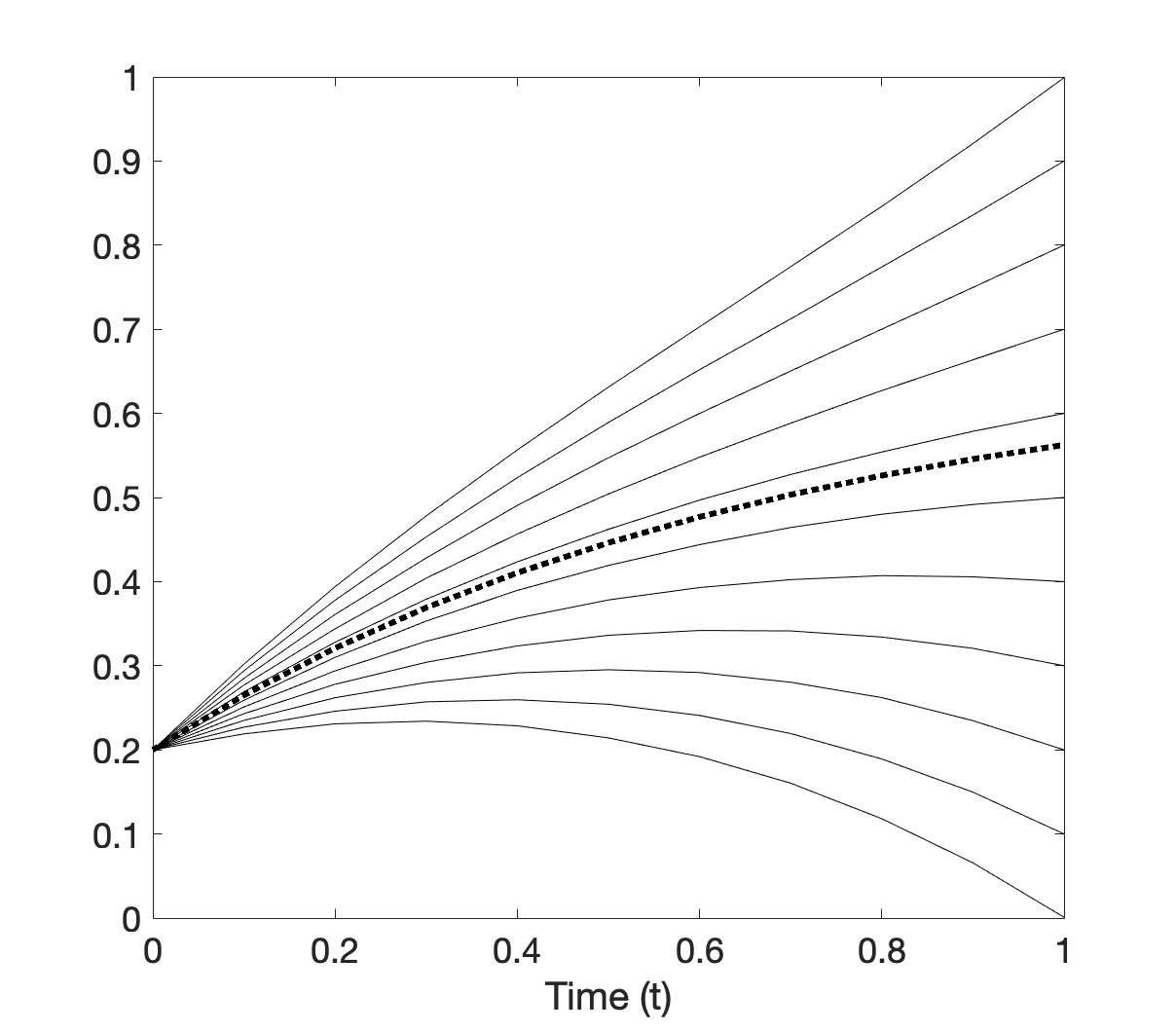}
\includegraphics[scale=0.175]{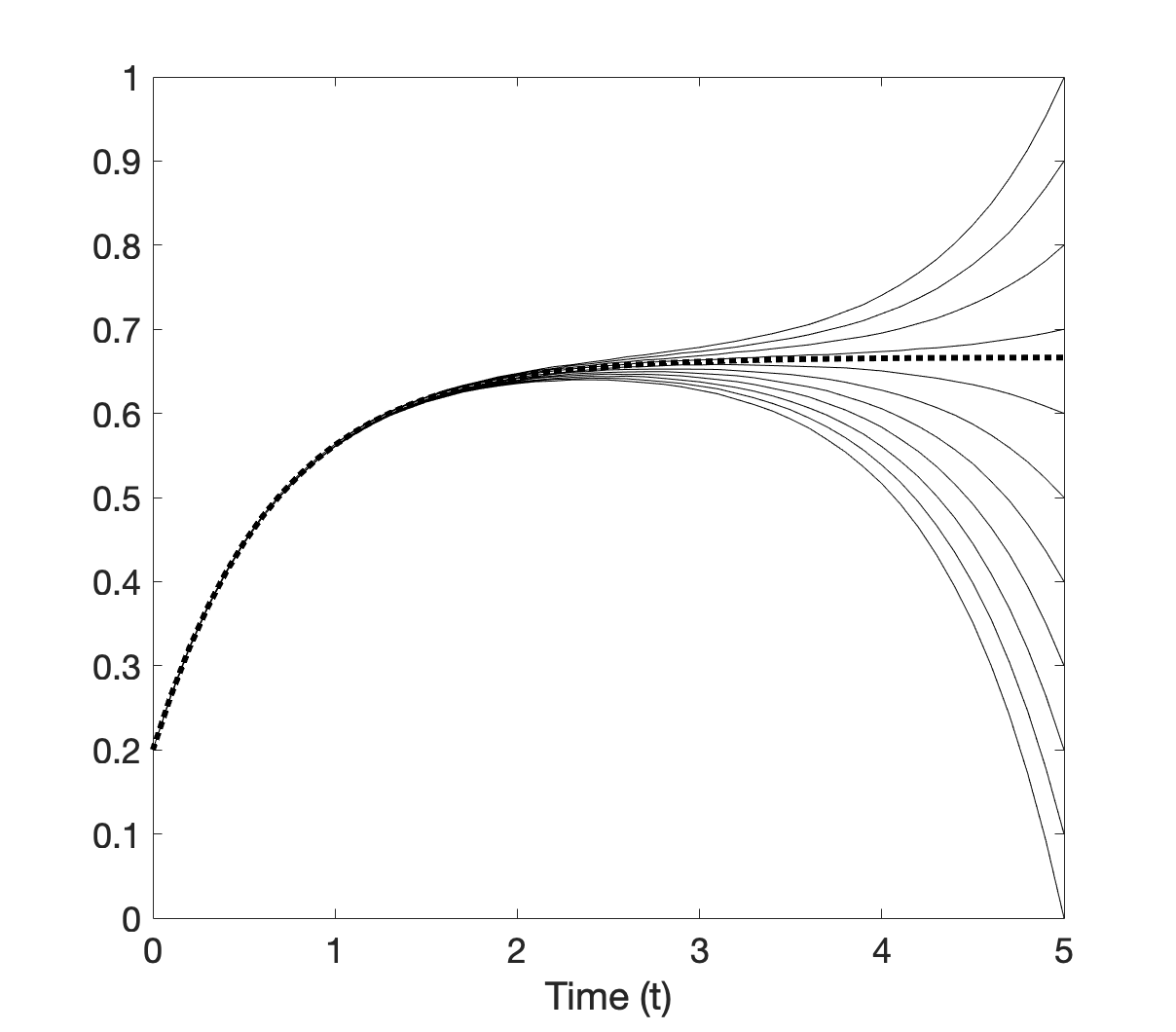}
\caption{\label{QueueSPLDP} Solid curves: the paths $t \mapsto f^*_{u\to r}(t)$ for $\lambda=1$, $\mu=\tfrac12$, $u=\tfrac15$ and $r=0, \tfrac{1}{10}, \tfrac{2}{10}, \dots, 1$ when $T=1$ (left) and $T=5$ (right). Dashed curve: most likely path without the terminal condition.}
\end{figure}

We need to define what we mean when we say that two paths $h,g \in \mathscr{W} \times [0,T]$ are equal. Define the equivalence relation `$\equiv$' by writing $h \equiv g$ if and only if
\begin{equation}
\label{PathQuot}
{\rm Leb} \left\{ (x,y) \in [0,1]^2\colon\, \text{there exists a } t \in [0,T] \text{ such that } h_t(x,y) \neq g_t(x,y) \right\} = 0.
\end{equation} 
Below when we write $\mathscr{W} \times [0,T]$ we assume that this is the quotient space formed by the equivalence relation `$\equiv$'.

\begin{lemma}{\bf [Identification of minimiser]}
\label{WTpath}
Set $u,r \in \mathscr{W}$. Let 
\begin{equation}
h^*_{u\to r}(x,y,t) = f^*_{u(x,y) \to r(x,y)}(t), \qquad t \in [0,T], \, (x,y)\in [0,1]^2.
\end{equation}
Then $h^*_{u \to T}$ is the unique minimiser of 
\begin{equation}
I(h) = \int_{0}^T \ddd t \int_{[0,1]^2} \ddd x \,\ddd y\, \mathcal{L}\big(h_t (x,y) , h'_t(x,y)\big),
\end{equation}
subject to the condition that $h_0 =u$ and $h_T=r$, where $\mathcal{L}$ is defined in \eqref{LDef}. In addition, $I(h^*_{u \to r}) = I_{1,T}(u,r)$.
\end{lemma}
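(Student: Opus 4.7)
The plan is to reduce this two-variable graphon problem to the pointwise scalar problem already handled in Lemma \ref{SPQOpt}, by exchanging the order of integration over $[0,T]$ and $[0,1]^2$. Since the Lagrangian density $\mathcal{L}(a,b)$ depends on $h_t$ and $h'_t$ only through their value at $(x,y)$, the cost functional $I(h)$ splits as
\begin{equation}
I(h) = \int_{[0,1]^2} \ddd x\, \ddd y \int_0^T \ddd t\, \mathcal{L}\big(h_t(x,y), h'_t(x,y)\big),
\end{equation}
provided $h \in \mathcal{AC}$ so that $h_\cdot(x,y)$ is absolutely continuous with derivative $h'_t(x,y)$ for almost every $(x,y)$. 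The boundary constraints $h_0 = u$ and $h_T = r$ are pointwise constraints on $h_\cdot(x,y)$ for a.e.\ $(x,y)$. Hence the minimization decouples over $(x,y)$: any admissible $h$ satisfies
\begin{equation}
I(h) \geq \int_{[0,1]^2} \ddd x\, \ddd y\, \inf_{\substack{f \in \mathcal{AC}([0,T]) \\ f(0) = u(x,y),\, f(T) = r(x,y)}} \int_0^T \ddd t\, \mathcal{L}(f(t), f'(t)).
\end{equation}

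Next I would invoke Lemma \ref{SPQOpt} for each $(x,y) \in [0,1]^2$ with the scalar boundary values $u(x,y)$ and $r(x,y)$ in place of $u,r$. The lemma identifies the unique minimiser as $f^*_{u(x,y) \to r(x,y)}$ and evaluates the minimum to $I_{1,T}(u(x,y), r(x,y))$. Integrating over $(x,y)$ and comparing with definition \eqref{efg}, one obtains the lower bound
\begin{equation}
I(h) \geq \int_{[0,1]^2} \ddd x\, \ddd y\, I_{1,T}(u(x,y), r(x,y)) = I_{1,T}(u,r)
\end{equation}
for every admissible $h$. Finally I would verify that the candidate $h^*_{u \to r}(x,y,t) = f^*_{u(x,y) \to r(x,y)}(t)$ is itself admissible and attains this bound with equality, so that $I(h^*_{u \to r}) = I_{1,T}(u,r)$.

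The main technical obstacle I expect is the \emph{measurability} of the candidate path $h^*_{u \to r}$ in $(x,y,t)$, which is needed in order to assert that $h^*_{u \to r} \in \mathscr{W} \times [0,T]$ and that the Fubini/decoupling step applies to it. The definition of $f^*_{u \to r}(t) = \argmin_{s \in [0,1]}[I_{1,t}(u,s) + I_{1,T-t}(s,r)]$ gives an $\argmin$ parameterised by the boundary data; uniqueness of this $\argmin$ (guaranteed by Lemma \ref{SPQOpt}) combined with joint continuity of the objective in $(u, r, t, s)$ lets one apply a measurable-selection/Berge-type result to conclude that $(x,y,t) \mapsto f^*_{u(x,y) \to r(x,y)}(t)$ is jointly measurable and absolutely continuous in $t$ for each $(x,y)$. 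One also needs to check that $\mathcal{L}(h^*_{u \to r}, (h^*_{u \to r})')$ is measurable, which follows because $\mathcal{L}$ is a convex, lower semi-continuous function of its two arguments given by an explicit Legendre transform of the expression in \eqref{LDef}.

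\textbf{Uniqueness.} Given the pointwise uniqueness in Lemma \ref{SPQOpt}, if $h$ is any other minimiser, equality $I(h) = I_{1,T}(u,r)$ forces the inner integral $\int_0^T \mathcal{L}(h_t(x,y), h'_t(x,y))\,\ddd t$ to equal $I_{1,T}(u(x,y), r(x,y))$ for a.e.\ $(x,y)$. Applying the scalar uniqueness assertion at each such $(x,y)$, one obtains $h_t(x,y) = f^*_{u(x,y) \to r(x,y)}(t)$ for all $t \in [0,T]$, for almost every $(x,y) \in [0,1]^2$. This is precisely $h \equiv h^*_{u \to r}$ in the sense of \eqref{PathQuot}, giving uniqueness in the quotient space.
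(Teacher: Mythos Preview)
Your proposal is correct and follows essentially the same approach as the paper: swap the order of integration via Fubini, apply the scalar Lemma~\ref{SPQOpt} pointwise in $(x,y)$ to obtain both the value $I(h^*_{u\to r})=I_{1,T}(u,r)$ and the lower bound, and deduce uniqueness from the pointwise uniqueness in Lemma~\ref{SPQOpt} together with the equivalence relation \eqref{PathQuot}. The only notable difference is that you spell out the measurability/admissibility of $h^*_{u\to r}$, which the paper passes over in silence; the paper phrases the uniqueness step via a positive-measure ``bad set'' rather than an a.e.\ equality argument, but this is the same idea.
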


We next turn our attention to the original variational problem on $\tilde{\mathscr{W}}\times [0,T]$. If $\tilde h \in \tilde H^*$, then, armed with Lemma \ref{WTpath} and the specific form of $\tilde I$, we may expect that there exists a representative $h$ of $\tilde h$ such that 
\begin{equation}
I(h) = I_{1,T}(u,r^\sigma), \qquad h_T=r^\sigma,
\end{equation}
for some $\sigma \in \mathscr{M}$. By Lemma \ref{WTpath}, the only such paths are of the form $h^*_{u \to r}$. Theorem \ref{Ap2Thm} below, which applies when $u$ and $r$ are \emph{block graphons}, shows that we may restrict our attention to the equivalence classes of these paths, i.e., the set $\{ \tilde h^*_{u \to r^\sigma} \}_{\sigma \in \mathscr{M}}$, and implies that we can replace the variational problem on $\tilde{\mathscr{W}} \times [0,T]$ by a significantly simpler one, in terms of permutations of the target graphon $r$. 

For $I \in \mathbb{N}$, let $\mathscr{W}^{(I)}$ denote the space of block graphons with $I^2$ blocks, so that for any $g \in \mathscr{W}^{(I)}$ there exist \emph{block endpoints} $0 = x_0 < x_1 < \dots < x_I=1$ such that 
\begin{equation}
g(x,y) = g_{ij} \qquad \forall\,x,y \in [x_{i-1}, x_i) \times [x_{j-1}, x_j). 
\end{equation}
For $u \in \mathscr{W}^{(I)}$ and $r \in \mathscr{W}^{(J)}$ with block endpoints $0=a_0 < a_1 < \dots < a_I=1$ and $0=b_0 < b_1< \dots< b_J=1$, respectively, let $\alpha\colon\,\mathscr{M} \mapsto [0,1]^{I \times J}$ be defined by
\begin{equation}
\label{Aldeff}
\alpha(\sigma)_{ij} := {\rm Leb} \left\{x \in [0,1]\colon\, x \in [a_{i-1}, a_i), \, 
\sigma(x) \in [b_{i-1}, b_i) \right\}, \qquad \sigma \in \mathscr{M}.
\end{equation}
Note that $\alpha$ can map to any value in the compact set 
\begin{equation}
\label{Vdeff}
V= \left\{ \boldsymbol v \in [0,1]^{I \times J}\colon\, \sum_{j \in J} v_{ij} = a_i - a_{i-1}\,\,\forall\,i \in I, \,\,
\sum_{i \in I} v_{ij} = b_{j} - b_{j-1}\,\,\forall\,j \in J \right\}.
\end{equation}
It is important to point out that, for any $\sigma_1,\sigma_2 \in \mathscr{M}$ with $\alpha(\sigma_1)=\alpha(\sigma_2)$,
\begin{equation}
\tilde h^*_{u \to r^{\sigma_1}} = \tilde h^*_{u \to r^{\sigma_2}}, \qquad I_{1,T}(u, r^{\sigma_1})= I_{1,T}(u,r^{\sigma_2}).
\end{equation}
For $v \in [0,1]^{I \times J}$, let $\sigma_{\boldsymbol v}$ be any element of $\mathscr{M}$ such that $\alpha(\sigma) = \boldsymbol v$.

\begin{theorem}{\bf [Optimal paths]}
\label{Ap2Thm}
Suppose that $u \in \mathscr{W}^{(I)}$ and $r \in \mathscr{W}^{(J)}$ for some $I,J \in \mathbb{N}$. Then
\begin{equation}
\label{HtSFt}
\tilde H^* \subseteq  \tilde F^*:=\{ \tilde h^*_{u \to r^\sigma} \}_{\sigma \in \mathscr{M}} = \{ \tilde h^*_{u \to r^{\sigma_{\boldsymbol v}}} \}_{\boldsymbol v \in V}.
\end{equation}
Moreover, if $V^*$ is the set of $\boldsymbol v \in V$ that minimise $I_{1,T}(u, r^{\sigma_{\boldsymbol v}})$, then $V^*$ is non-empty and 
\begin{equation}
\label{HtSHst}
\tilde H^* = \{ \tilde h^*_{u \to r^{\sigma_{\boldsymbol v}}} \}_{\boldsymbol v \in V^*}.
\end{equation} 
\end{theorem}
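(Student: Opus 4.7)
The plan is to first establish $\tilde H^* \subseteq \{\tilde h^*_{u \to r^\sigma}\}_{\sigma \in \mathscr{M}}$ using Lemma~\ref{WTpath}, and then to exploit the block structure of $u$ and $r$ to identify $\{\tilde h^*_{u \to r^\sigma}\}_{\sigma \in \mathscr{M}}$ with $\{\tilde h^*_{u \to r^{\sigma_{\boldsymbol v}}}\}_{\boldsymbol v \in V}$ and select the minimisers.

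For the first inclusion, I would pick $\tilde h \in \tilde H^*$ and an approximate minimising representative $h \sim \tilde h$. Since $\tilde h_0 = \tilde u$ and $\tilde h_T = \tilde r$, there exist $\sigma_0, \sigma_T \in \mathscr{M}$ with $h_0 = u^{\sigma_0}$ and $h_T = r^{\sigma_T}$. Applying the time-independent relabeling $\tau := \sigma_0^{-1}$ to every time slice, set $\hat h_t(x,y) := h_t(\tau(x), \tau(y))$. Then $\hat h \sim \tilde h$ by \eqref{PathEDef}, the action integral is invariant ($I(\hat h) = I(h)$ by change of variables, since $\mathcal{L}$ acts pointwise), $\hat h_0 = u$, and $\hat h_T = r^\sigma$ with $\sigma := \sigma_T \circ \sigma_0^{-1}$. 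Lemma~\ref{WTpath} applied with pointwise endpoints $u$ and $r^\sigma$ yields $I(\hat h) \geq I_{1,T}(u, r^\sigma)$, with unique minimiser $h^*_{u \to r^\sigma}$. Consequently $\tilde I(\tilde h) \geq \inf_{\sigma \in \mathscr{M}} I_{1,T}(u, r^\sigma)$, and since $\tilde I(\tilde h^*_{u \to r^\sigma}) \leq I_{1,T}(u, r^\sigma)$ by taking $h^*_{u \to r^\sigma}$ as a representative, the minimality of $\tilde h$ forces $\tilde h = \tilde h^*_{u \to r^\sigma}$ for some $\sigma$.

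For the second equality in \eqref{HtSFt}, observe that $h^*_{u \to r^\sigma}(x,y,t) = f^*_{u(x,y) \to r^\sigma(x,y)}(t)$ depends on $(x,y)$ only through the pair $(u(x,y), r^\sigma(x,y))$. For block $u$ and $r$, the Lebesgue measure of $\{(x,y) : u(x,y) = u_{ii'},\, r^\sigma(x,y) = r_{jj'}\}$ equals $\alpha(\sigma)_{ij}\,\alpha(\sigma)_{i'j'}$, which depends only on $\boldsymbol v := \alpha(\sigma) \in V$. Hence, for each $t$, the graphon $h^*_{u \to r^\sigma}(\cdot,\cdot,t)$ is a block graphon whose (value, block-mass) multi-set is determined by $\boldsymbol v$ alone, so its $\sim$-equivalence class, and by \eqref{PathEDef} that of the entire path $\tilde h^*_{u \to r^\sigma}$, is a function of $\boldsymbol v$ only. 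The same block decomposition gives
\begin{equation}
I(h^*_{u \to r^\sigma}) = \sum_{i,i',j,j'} \alpha(\sigma)_{ij}\,\alpha(\sigma)_{i'j'}\, I_{1,T}(u_{ii'},r_{jj'}) = I_{1,T}(u,r^\sigma),
\end{equation}
which is a polynomial in $\boldsymbol v$. Since $V$ is compact and this polynomial is continuous, $V^*$ is non-empty, and combining with the first step yields \eqref{HtSHst}.

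The main obstacle is the first step, because \eqref{PathEDef} a priori permits \emph{time-dependent} relabelings $\sigma_t$, whereas Lemma~\ref{WTpath} requires the endpoints fixed \emph{pointwise}. The key manoeuvre is that a single time-independent $\tau$ already suffices to bring $h_0$ to $u$ exactly, at the cost of freeing $h_T$ up to one permutation $\sigma$; this is precisely what reduces the infinite-dimensional variational problem on $\tilde{\mathscr{W}} \times [0,T]$ to the finite-dimensional one of minimising $I_{1,T}(u, r^{\sigma_{\boldsymbol v}})$ over the compact polytope $V$, where one additionally uses the surjectivity of $\alpha\colon \mathscr{M} \to V$ built into the definition of $\sigma_{\boldsymbol v}$ so that every $\boldsymbol v \in V$ is realised by some element of $\mathscr{M}$.
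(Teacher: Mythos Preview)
Your overall plan is sound and matches the paper's: reduce to representatives with $h_0=u$, invoke Lemma~\ref{WTpath}, and exploit the block structure to parametrise by the compact polytope $V$. The second part of your argument (the block decomposition showing that $\tilde h^*_{u\to r^\sigma}$ and $I_{1,T}(u,r^\sigma)$ depend only on $\alpha(\sigma)\in V$, and that $V^*$ is non-empty by continuity on a compact set) is essentially correct.

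The gap is in the first step, and it is exactly at the place you yourself flag. You choose an \emph{approximate} minimising representative $h$, obtain $I(\hat h)\geq I_{1,T}(u,r^\sigma)$, and then write ``the minimality of $\tilde h$ forces $\tilde h=\tilde h^*_{u\to r^\sigma}$ for some $\sigma$''. But the uniqueness clause of Lemma~\ref{WTpath} only yields $\hat h=h^*_{u\to r^\sigma}$ when $I(\hat h)=I_{1,T}(u,r^\sigma)$ exactly. With an approximate minimiser you only get $I(\hat h)\leq m+\varepsilon$ where $m=\tilde I(\tilde h)$, and there is no reason the infimum $\tilde I(\tilde h)=\inf_{h\sim\tilde h}I(h)$ is attained. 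Worse, the permutation $\sigma=\sigma_\varepsilon$ depends on the representative and may vary as $\varepsilon\downarrow 0$, so you cannot simply pass to the limit in the identity $\hat h=h^*_{u\to r^{\sigma_\varepsilon}}$; you need to control both where $\sigma_\varepsilon$ goes and how close $\hat h$ is to $h^*_{u\to r^{\sigma_\varepsilon}}$.

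The paper closes this gap as follows. It takes a sequence of representatives $h^{[i]}$ with $h^{[i]}_0=u$, $h^{[i]}_T=r^{\sigma^{[i]}}$ and $I(h^{[i]})\to m$, passes to a subsequence with $\alpha(\sigma^{[i]})\to\boldsymbol v^*\in V^*$ (compactness of $V$), and then argues by contradiction: if $\tilde h\notin\tilde F^*$, compactness of $\tilde F^*$ gives a time $t\in(0,T)$ and $\varepsilon>0$ with $\delta_\square(\tilde h_t,\tilde h^*_{u\to r^{\sigma_{\boldsymbol v^*}}}(\cdot,\cdot,t))\geq\varepsilon$, which via the $L^1$ bound on $\delta_\square$ forces a positive-measure set where $h^{[i]}_t$ is $\varepsilon'$-far from the optimal midpoint $f^*_{u_{ij}\to r_{\ell m}}(t)$. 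A strict-convexity gap
\[
\kappa_t(\varepsilon')=\min_{i,j,\ell,m,\;|\beta|\geq\varepsilon'}\Big[I_{1,t}(u_{ij},f^*_{u_{ij}\to r_{\ell m}}(t)+\beta)+I_{1,T-t}(f^*_{u_{ij}\to r_{\ell m}}(t)+\beta,r_{\ell m})-I_{1,T}(u_{ij},r_{\ell m})\Big]>0
\]
(finite minimum since $u,r$ are block graphons) then gives $I(h^{[i]})\geq I_{1,T}(u,r^{\sigma_{\boldsymbol v^*}})+\varepsilon'\kappa_t(\varepsilon')$ for large $i$, contradicting $I(h^{[i]})\to m$. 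This quantitative stability is precisely the missing piece in your argument; it is where the block hypothesis on $u$ and $r$ does real work beyond the parametrisation by $V$.
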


\noindent
The requirement that $u$ and $r$ be block graphons is harmless, as block graphons can be used to approximate any graphon arbitrarily closely in ${L}^2$  \cite[Proposition 2.6]{C}. The following corollary is immediate because the constant graphon is invariant under permutation.

\begin{corollary}{\bf [Uniqueness]}
\label{OPG}
If either $u$ or $r$ is a constant graphon, then $\tilde H^*=\tilde F^*=\{\tilde h^*_{u \to r} \}$, i.e., both sets contain a single element.
\end{corollary}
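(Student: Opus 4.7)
The plan is to derive the corollary directly from Theorem~\ref{Ap2Thm} by arguing that when either $u$ or $r$ is constant the family $\tilde F^* = \{\tilde h^*_{u \to r^\sigma}\}_{\sigma \in \mathscr{M}}$ collapses to a single equivalence class, and then to invoke non-emptiness of $\tilde H^*$ from Theorem~\ref{SPCond}.

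The key preliminary observation I would record is an equivariance property of the construction $h^*_{u \to r}$ of Lemma~\ref{WTpath}. Since this path is defined pointwise in $(x,y)$ through $h^*_{u \to r}(x,y,t) = f^*_{u(x,y) \to r(x,y)}(t)$, simultaneously permuting both endpoints by any $\sigma \in \mathscr{M}$ yields
\begin{equation*}
h^*_{u^\sigma \to r^\sigma}(x,y,t) = h^*_{u \to r}(\sigma(x),\sigma(y),t) \qquad \forall\, t \in [0,T],
\end{equation*}
so that the two paths differ by a single, time-independent relabelling and hence represent the same equivalence class under \eqref{PathEDef}. I would then handle the two cases in turn. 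If $r$ is constant, then $r^\sigma = r$ for every $\sigma$, hence $\tilde h^*_{u \to r^\sigma} = \tilde h^*_{u \to r}$ trivially, and $\tilde F^* = \{\tilde h^*_{u \to r}\}$. If instead $u$ is constant, then $u^\sigma = u$, and the equivariance display above gives $\tilde h^*_{u \to r^\sigma} = \tilde h^*_{u^\sigma \to r^\sigma} = \tilde h^*_{u \to r}$ for every $\sigma$, so again $\tilde F^*$ is a singleton.

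To conclude, I would note that a constant graphon lies in $\mathscr{W}^{(1)}$, so the hypothesis of Theorem~\ref{Ap2Thm} is met and we obtain $\tilde H^* \subseteq \tilde F^* = \{\tilde h^*_{u \to r}\}$. Since $\tilde H^*$ is non-empty by Theorem~\ref{SPCond}, both inclusions are equalities and $\tilde H^* = \tilde F^* = \{\tilde h^*_{u \to r}\}$.

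I do not foresee any real obstacle: once the equivariance of $h^*_{u \to r}$ under simultaneous pointwise permutation of its endpoints is in hand, the corollary is a purely book-keeping consequence of Theorem~\ref{Ap2Thm} together with the trivial fact that a constant graphon is fixed by every $\sigma \in \mathscr{M}$. The only minor subtlety to check is that the equivariance respects the path-level equivalence relation \eqref{PathEDef}, which it does because the permutation applied is the same at every time $t$.
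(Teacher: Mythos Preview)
Your proposal is correct and follows exactly the paper's one-line argument that the corollary is ``immediate because the constant graphon is invariant under permutation''; you have simply spelled out the details, in particular the equivariance $h^*_{u^\sigma \to r^\sigma} = (h^*_{u \to r})^\sigma$ needed for the case where $u$ (rather than $r$) is the constant one. One small imprecision: Theorem~\ref{Ap2Thm} as stated requires \emph{both} $u$ and $r$ to be block graphons, so ``a constant graphon lies in $\mathscr{W}^{(1)}$, so the hypothesis of Theorem~\ref{Ap2Thm} is met'' slightly overclaims unless the other endpoint is also a block graphon (which is implicit in the paper's setup, or else covered by the approximation remark immediately preceding the corollary).
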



\subsubsection{Multiplicity}

We next explore whether $\tilde H^*$ can contain multiple paths. Theorem~\ref{Ap2Thm} provides a concrete criterion. In particular, $\tilde H^*$ contains multiple paths if and only if there exist $\boldsymbol v_1, \boldsymbol v_2 \in V$ such that
\begin{equation}
\label{MConds} 
\tilde I_{1,T}( \tilde u, \tilde r) = I_{1,T}(u, r^{\sigma_{\boldsymbol v_1}})
= I_{1,T}(u, r^{\sigma_{\boldsymbol v_2}}), \qquad  
\tilde h^*_{u \to  r^{\sigma_{\boldsymbol v_1}}}  \neq \tilde h^*_{u \to  r^{\sigma_{\boldsymbol v_2}}}.
\end{equation}
We thus need to determine whether  these two conditions can be satisfied simultaneously. 

We begin by focussing on the latter condition: Can $\tilde F^*$ (defined in Theorem \ref{Ap2Thm}) contain multiple paths? The answer is yes: even though for any $\tilde h, \tilde g \in \tilde F^*$ we have $\tilde u = \tilde h_{0} = \tilde g_0$ and $\tilde r = \tilde h_{T} = \tilde g_T$, this does not necessarily imply that $\tilde h_t = \tilde g_t$ for $t \in (0, T)$. To see why, we consider the following simple example. Let  $u, r \in \mathscr{W}$ be such that 
\begin{equation}
\label{huDef}
u(x,y)=r(x,y) =
\begin{cases}
1 \qquad &\text{if } x \leq \tfrac12 , \, y \leq \tfrac12 , \\
0 \qquad &\text{if } x \geq \tfrac12 , \, y \geq \tfrac12 , \\
\tfrac12 \quad  & \text{otherwise}, 
\end{cases}
\end{equation}
and $\sigma \in \mathscr{M}$ be such that 
\begin{equation}
\label{siDef}
\sigma(x) = 
\begin{cases}
x+\tfrac12  \qquad &\text{if } x \leq \tfrac12 , \\
x-\tfrac12  \qquad &\text{if } x > \tfrac12 . \\
\end{cases}
\end{equation}
Recalling the definition of $f^*_{a \to b}$ from Lemma \ref{SPQOpt}, we have
\begin{equation}
h^*_{ u \to r}(x,y,t) = 
\begin{cases}
f^*_{1 \to 1}(t)  \quad &\text{if } x \leq \tfrac12 , \, y \leq \tfrac12 , \\
f^*_{0 \to 0}(t) \quad &\text{if } x \geq \tfrac12 , \, y \geq \tfrac12 , \\
f^*_{\frac{1}{2} \to \frac{1}{2}}(t) \quad  & \text{otherwise},
\end{cases} 
\end{equation}
and
\begin{equation}
h^*_{ u \to r^\sigma}(x,y,t) = 
\begin{cases}
f^*_{1 \to 0}(t)  \quad &\text{if } x \leq \tfrac12 , \, y \leq \tfrac12 , \\
f^*_{0 \to 1}(t) \quad &\text{if } x \geq \tfrac12 , \, y \geq \tfrac12 , \\
f^*_{\frac{1}{2} \to \frac{1}{2}}(t) \quad  & \text{otherwise}.
\end{cases}
\end{equation}
The three $(x,y)$-coordinate paths of $h^*_{u \to r}$ (solid) and $h^*_{u \to r^\sigma}$ (dashed) are illustrated in Figure~\ref{UniPath1} for $\lambda=\mu=1$ and $T=3$. It is easiest to see that $\tilde h^*_{u \to r} \neq \tilde h^*_{u \to r^\sigma}$ by looking at their values when $t=\tfrac32$: $h^*_{u \to r^\sigma}(\cdot,\cdot,\tfrac32)$ is a constant graphon while $h^*_{u \to r}(\cdot,\cdot,\tfrac32)$ is not. 

\begin{figure}[!htb]
\includegraphics[scale=0.40]{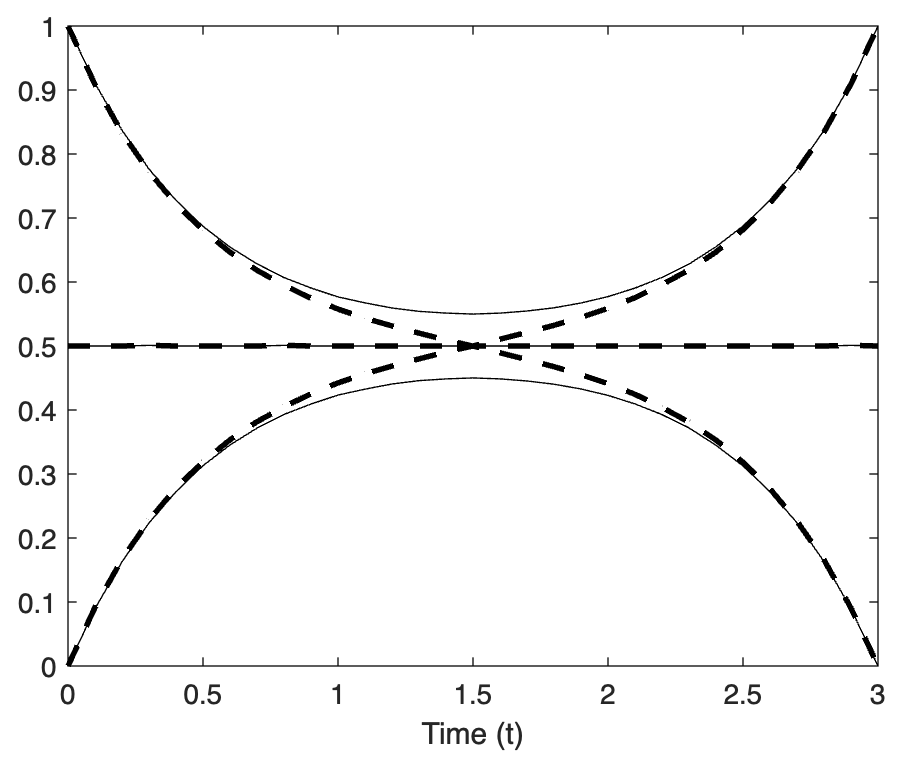}
\caption{\label{UniPath1} The $(x,y)$-coordinate paths of $h^*_{u \to r}$ (solid) and $h^*_{u \to r^\sigma}$ (dashed) for $\lambda=\mu=1$, $T=3$, with $u$, $r$ and $\sigma$ defined in \eqref{huDef} and \eqref{siDef}.}
\end{figure}

While the above example demonstrates that $\tilde F^*$ can contain multiple elements, it does not imply that $\tilde H^*$ can contain multiple elements. Indeed, the next proposition implies that in the above example $\tilde H^*$ contains a single element: $\tilde H^*= \{ \tilde h^*_{u \to r} \}$.

\begin{proposition}{\bf [Uniqueness]}
\label{TwoBlocks}
If there exist $\sigma_1,\sigma_2 \in \mathscr{M}$ such that, for any $a < b$ and $x \in [0,1]$,
\begin{equation}
\label{HUC}
u^{\sigma_2}(a,x) \leq u^{\sigma_2}(b,x), \qquad r^{\sigma_1}(a,x) \leq r^{\sigma_1}(b,x),
\end{equation}
then $\tilde H^* = \{ \tilde h^*_{u^{\sigma_2} \to r^{\sigma_1}} \}$.
\end{proposition}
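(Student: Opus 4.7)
Since $\tilde u$ and $\tilde r$ depend only on equivalence classes, I replace $u$ by $u^{\sigma_2}$ and $r$ by $r^{\sigma_1}$, relabel these as $u, r$, and observe that the hypothesis now states that $u$ and $r$ are non-decreasing in the first argument; by the graphon symmetry $u(x,y)=u(y,x)$, they are monotone in both arguments. The reduced goal is $\tilde H^* = \{\tilde h^*_{u\to r}\}$. The strategy has two pillars: (a) identify $\tilde I_{1,T}(\tilde u,\tilde r) = \inf_{\sigma\in\mathscr{M}} I_{1,T}(u, r^\sigma) = I_{1,T}(u,r)$ via a rearrangement inequality, and (b) show that every minimising $\sigma$ yields the same equivalence class of path.

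\textbf{Submodularity and rearrangement.}
From \eqref{ee}--\eqref{ef}, the maximiser $v^* = v^*(a,b)$ satisfies $b = aA'(v^*) + (1-a)B'(v^*)$ with $A(v) = \log[(1-p_{11,T}) + e^v p_{11,T}]$ and $B(v) = \log[(1-p_{01,T}) + e^v p_{01,T}]$. A short calculation gives
\[
A'(v) - B'(v) = \frac{e^v(p_{11,T} - p_{01,T})}{[(1-p_{11,T})+e^v p_{11,T}]\,[(1-p_{01,T})+e^v p_{01,T}]} > 0,
\]
while $A''(v), B''(v) > 0$. Combining implicit differentiation of the defining equation for $v^*$ with the envelope identity $\partial_b I_{1,T}(a,b) = v^*(a,b)$ yields $\partial^2_{ab} I_{1,T} < 0$: the two-point rate function is strictly submodular. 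To exploit this, I approximate the monotone $u, r$ in $L^1$ by monotone block graphons on a common dyadic partition $(a_i)_{i=0}^n$. On such block graphons, $I_{1,T}(u, r^\sigma) = \sum_{i,j,k,l} \alpha_{ik}\alpha_{jl}\,I_{1,T}(u_{ij}, r_{kl})$ with $\alpha \in V$ as in \eqref{Aldeff}--\eqref{Vdeff}. An adjacent-transposition argument, pairing the four corners $(i,j), (i{+}1,j), (i,j{+}1), (i{+}1,j{+}1)$ and invoking strict submodularity, shows that the identity coupling $\alpha_{ik} = (a_i - a_{i-1})\delta_{ik}$ minimises the finite sum; passing to the limit gives $\inf_{\sigma}I_{1,T}(u, r^\sigma) = I_{1,T}(u,r)$.

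\textbf{Assembly.}
By the contraction principle applied to $\tilde h \mapsto \tilde h_T$, combined with Theorem~\ref{tTwoLDP}, every $\tilde h$ with $\tilde h_0=\tilde u,\,\tilde h_T=\tilde r$ satisfies $\tilde I(\tilde h) \geq \tilde I_{1,T}(\tilde u,\tilde r) = I_{1,T}(u,r)$. Conversely, Lemma~\ref{WTpath} gives $\tilde I(\tilde h^*_{u\to r}) \leq I(h^*_{u\to r}) = I_{1,T}(u,r)$, so $\tilde h^*_{u\to r} \in \tilde H^*$ and the infimum on the constraint set equals $I_{1,T}(u,r)$. For any $\tilde h \in \tilde H^*$, choose a representative $h$ with $h_0 = u$ (by applying a single measure-preserving bijection uniformly in $t$ to align the initial slice); then $h_T = r^\sigma$ for some $\sigma$ achieving $I_{1,T}(u,r^\sigma) = I_{1,T}(u,r)$, and the uniqueness clause of Lemma~\ref{WTpath} forces $h = h^*_{u\to r^\sigma}$.

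\textbf{Main obstacle.}
The delicate step is to show that $\tilde h^*_{u\to r^\sigma} = \tilde h^*_{u\to r}$ for \emph{every} such minimising $\sigma$. Uniqueness of $\sigma$ genuinely fails whenever $u$ has a flat region on which $r$ is also constant---the case of constant $u$ in Corollary~\ref{OPG} being the extreme instance. The plan is to use strict submodularity from the previous step to argue that the equality $I_{1,T}(u, r^\sigma) = I_{1,T}(u, r)$ forces $\sigma$ to permute only within joint level sets of $(u,r)$; on any such set both $u$ and $r$ are constant, so $r^\sigma = r$ and $u^\sigma = u$ almost everywhere there, and consequently $h^*_{u\to r^\sigma}(\cdot,\cdot,t) = h^*_{u\to r}(\cdot,\cdot,t)$ for almost every pixel and every $t$. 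This yields $\tilde H^* = \{\tilde h^*_{u\to r}\}$ and completes the proof.
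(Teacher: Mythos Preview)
Your proof shares its core ingredient with the paper's: the strict submodularity inequality $I_{1,T}(a,c)+I_{1,T}(b,d)<I_{1,T}(a,d)+I_{1,T}(b,c)$ for $a<b$, $c<d$, combined with a rearrangement argument showing that the identity coupling minimises $\sigma\mapsto I_{1,T}(u,r^\sigma)$ when $u$ and $r$ are monotone. The paper states this inequality directly and then invokes Theorem~\ref{Ap2Thm} to pass from the minimisation over $V$ to the identification of $\tilde H^*$; your derivation of submodularity via $\partial^2_{ab}I_{1,T}<0$ and your block-approximation rearrangement are fine, and your ``Main obstacle'' paragraph is in fact more explicit than the paper about why non-unique minimising $\sigma$ (arising from joint flat regions of $u$ and $r$) still yield a single equivalence class of paths.

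The gap is in your ``Assembly'' step. You write: ``For any $\tilde h\in\tilde H^*$, choose a representative $h$ with $h_0=u$\ldots; then $h_T=r^\sigma$ for some $\sigma$ achieving $I_{1,T}(u,r^\sigma)=I_{1,T}(u,r)$, and the uniqueness clause of Lemma~\ref{WTpath} forces $h=h^*_{u\to r^\sigma}$.'' For this chain to work you need $I(h)=\tilde I(\tilde h)=I_{1,T}(u,r)$, i.e.\ you need an \emph{optimal} representative. But $\tilde I(\tilde h)=\inf_{h\sim\tilde h}I(h)$ is an infimum over an equivalence class of paths that, by definition~\eqref{PathEDef}, allows a different measure-preserving bijection at each time $t$; there is no a priori reason this infimum is attained, let alone by a path obtained from a single time-independent relabeling. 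This is precisely the difficulty that Theorem~\ref{Ap2Thm} is designed to handle: its proof takes a \emph{sequence} $(h^{[i]})$ of near-optimal representatives with $h^{[i]}_0=u$, $h^{[i]}_T=r^{\sigma^{[i]}}$, extracts a limit point $\boldsymbol v^*\in V^*$ by compactness, and then derives a contradiction from $\tilde h\notin\tilde F^*$. The paper's short proof of the proposition works because it outsources this step to Theorem~\ref{Ap2Thm} (for block $u,r$, then approximates). Your route bypasses Theorem~\ref{Ap2Thm} and would therefore need to reproduce that limiting argument; as written, the selection of a single optimal representative is unjustified.
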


\begin{figure}
\centering
\begin{tikzpicture}[scale=0.35]
\usetikzlibrary{arrows}
\tikzset{vertex/.style = {shape=circle,draw,minimum size=1.3em}}
\tikzset{edge/.style = {->,> = latex'}}
\draw (0,0) rectangle (10,10); 
\draw (2,0) -- (2,10);
\draw (6,0) -- (6,10);
\draw (0,4) -- (10,4);
\draw (0,8) -- (10,8);
\node at (1,9) {$u_{11}$};
\node at (4,9) {$b$};
\node at (8,9) {$a$};
\node at (1,6) {$b$};
\node at (4,6) {$a$};
\node at (8,6) {$u_{23}$};
\node at (1,2) {$a$};
\node at (4,2) {$u_{23}$};
\node at (8,2) {$b$};

\draw (15,0) rectangle (25,10);
\draw (17,0) -- (17,10);
\draw (21,0) -- (21,10);
\draw (15,4) -- (25,4);
\draw (15,8) -- (25,8);
\node at (16,9) {$r_{11}$};
\node at (19,9) {$c$};
\node at (23,9) {$d$};
\node at (16,6) {$c$};
\node at (19,6) {$c$};
\node at (23,6) {$r_{23}$};
\node at (16,2) {$d$};
\node at (19,2) {$r_{23}$};
\node at (23,2) {$d$};

\draw (30,0) rectangle (40,10);
\draw (32,0) -- (32,10);
\draw (36,0) -- (36,10);
\draw (30,4) -- (40,4);
\draw (30,8) -- (40,8);
\node at (31,9) {$r_{11}$};
\node at (34,9) {$d$};
\node at (38,9) {$c$};
\node at (31,6) {$d$};
\node at (34,6) {$d$};
\node at (38,6) {$r_{23}$};
\node at (31,2) {$c$};
\node at (34,2) {$r_{23}$};
\node at (38,2) {$c$};

\node at (-1,2) {\footnotesize $\frac25$} ;
\node at (-1,6) {\footnotesize $\frac25$} ;
\node at (-1,9) { \footnotesize $\frac15$} ;

\node at (1,11) {\footnotesize $\frac15$} ;
\node at (4,11) {\footnotesize $\frac25$} ;
\node at (8,11) {\footnotesize $\frac25$} ;

\node at (14,2) {\footnotesize $\frac25$} ;
\node at (14,6) {\footnotesize $\frac25$} ;
\node at (14,9) {\footnotesize $\frac15$} ;

\node at (16,11) {\footnotesize $\frac15$} ;
\node at (19,11) {\footnotesize $\frac25$} ;
\node at (23,11) {\footnotesize $\frac25$} ;

\node at (29,2) {\footnotesize $\frac25$} ;
\node at (29,6) {\footnotesize $\frac25$} ;
\node at (29,9) {\footnotesize $\frac15$} ;

\node at (31,11) {\footnotesize $\frac15$} ;
\node at (34,11) {\footnotesize $\frac25$} ;
\node at (38,11) {\footnotesize $\frac25$} ;

\node at (5,13) {\large $u$} ;
\node at (20,13) {\large $r$} ;
\node at (35,13) {\large $r^\sigma$} ;

\end{tikzpicture} 
\caption{\label{PathEx} An example of $u$, $r$ and $\sigma$ for which $\tilde h^*_{u \to r} \neq \tilde h^*_{u \to r^\sigma}$ and $\tilde I(\tilde h^*_{u \to r}) = \tilde I (\tilde h^*_{u \to r^\sigma})$.}
\end{figure}

Is there always a single optimising path, i.e., does $\tilde H^*$ always contain a single element? To answer this question we consider the graphons $u$, $r$ and $r^\sigma$ illustrated in Figure \ref{PathEx}. Let $A_1 = [0, \frac{1}{5})$, $A_2 = [\frac{1}{5},\frac{3}{5})$ and $A_3 =[\frac{3}{5},1]$. For constants $a,b,c,d, u_{11},u_{23},r_{11},r_{23} \in [0,1]$, let 
\begin{equation}
\label{PathExu}
u(x,y) = \begin{cases}
u_{11} \quad &\text{if } (x,y) \in A^2_1,  \\
a \quad &\text{if } (x,y) \in A_2^2 \cup A_1 \times A_3 \cup A_3  \times A_1, \\
b \quad &\text{if } (x,y) \in A_3^2 \cup A_1 \times A_2 \cup A_2  \times A_1, \\
u_{23} &\text{otherwise, }
\end{cases}
\end{equation}
and 
\begin{equation}
\label{PathExr}
r(x,y) = \begin{cases}
r_{11} \quad &\text{if } (x,y) \in A_1^2, \\
c \quad &\text{if } (x,y) \in A_2^2 \cup A_1 \times A_3 \cup A_3  \times  A_1, \\
d \quad &\text{if } (x,y) \in A_3^2 \cup A_1 \times A_2 \cup A_2  \times  A_1, \\
r_{23} &\text{otherwise, }
\end{cases}
\end{equation}
and let $\sigma \in \mathscr{M}$ be such that 
\begin{equation}
\label{PathExs}
\sigma(x) = 
\begin{cases}
x \quad  &\text{if } x \in A_1, \\
x+\frac{2}{5} \quad &\text{if } x \in A_2, \\
x-\frac{2}{5} &\text{if } x \in A_3.
\end{cases}
\end{equation}
Note that $I (h^*_{u \to r} ) =I_{1,T} (u, r)  = I_{1,T}(u, r^\sigma)=  I(h^*_{u \to r^\sigma})$, regardless of the values of $a,b,c,d, u_{11},u_{23},r_{11},r_{23}$.  However, to ensure that both conditions in \eqref{MConds} are satisfied, we need to select these parameters carefully. The next proposition tells us how we can do this. 

\begin{proposition}{\bf [Non-uniqueness]}
\label{ThreeBlocks}
Suppose that $u$, $r$, $\sigma$ are given by \eqref{PathExu}, \eqref{PathExr}, \eqref{PathExs}, and set 
\begin{equation}
a=c=0, \; \; \; b=d=\varepsilon, \; \; \;  u_{11}=u_{23}=r_{11}=r_{23}=1.
\end{equation}
Then, for $\varepsilon,T>0$ sufficiently small, $\tilde H^* = \{ \tilde h^*_{u \to r}, \tilde h^*_{u \to r^\sigma} \}$ and $\tilde h^*_{u \to r} \neq \tilde h^*_{u \to r^\sigma}$.
\end{proposition}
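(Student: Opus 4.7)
My plan is to exploit Theorem \ref{Ap2Thm}: for block graphons $u,r\in\mathscr{W}^{(3)}$ with common row/column profile $(\tfrac15,\tfrac25,\tfrac25)$, the set $\tilde H^*$ equals $\{\tilde h^*_{u\to r^{\sigma_{\boldsymbol v}}}\}_{\boldsymbol v\in V^*}$, where $V^*$ is the set of minimisers of the quadratic form
\[
F(\boldsymbol v)\;=\;I_{1,T}(u,r^{\sigma_{\boldsymbol v}})\;=\;\sum_{i,j,k,l=1}^{3}v_{ik}v_{jl}\,I_{1,T}(u_{ij},r_{kl})
\]
on the transportation polytope $V\subset[0,1]^{3\times 3}$. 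Writing $\boldsymbol v_{\mathrm{id}}$ and $\boldsymbol v_\sigma$ for the vertices of $V$ corresponding to the identity permutation and the $\{2,3\}$-swap $\sigma$ from \eqref{PathExs}, the task reduces to proving $V^*=\{\boldsymbol v_{\mathrm{id}},\boldsymbol v_\sigma\}$ and that the two resulting classes $\tilde h^*_{u\to r}$ and $\tilde h^*_{u\to r^\sigma}$ are inequivalent.

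I first handle the unperturbed case $\varepsilon=0$, where $u=r$ takes only the values $0$ and $1$ with one-blocks at $(1,1)$, $(2,3)$, $(3,2)$. Because $I_{1,T}(a,b)$ then takes only two values, namely $I_{1,T}(0,0)=I_{1,T}(1,1)$ and $I_{1,T}(0,1)=I_{1,T}(1,0)$, with the first strictly smaller for $T>0$ small, minimising $F$ amounts to maximising the ``agreement mass'' $\sum_{u_{ij}=r_{kl}}v_{ik}v_{jl}$. A direct case analysis using the marginal constraints forces $v_{11}=\tfrac15$ and $v_{12}=v_{13}=v_{21}=v_{31}=0$, reducing the optimisation to the one-parameter family $\alpha\mapsto\alpha\boldsymbol v_{\mathrm{id}}+(1-\alpha)\boldsymbol v_\sigma$; along this family the agreement mass equals $1-\tfrac{32}{25}\alpha(1-\alpha)$, maximised exactly at $\alpha\in\{0,1\}$, so $V^*\big|_{\varepsilon=0}=\{\boldsymbol v_{\mathrm{id}},\boldsymbol v_\sigma\}$.

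To extend the conclusion to small $\varepsilon,T>0$, I use a perturbation argument: $F$ is jointly continuous in $(\boldsymbol v,\varepsilon,T)$ and $V$ is compact, so the $\varepsilon=0$ strict separation of the two minimisers from the rest of $V$ persists for $(\varepsilon,T)$ small; since $\boldsymbol v_{\mathrm{id}}$ and $\boldsymbol v_\sigma$ are vertices of $V$ they cannot be displaced by the smooth perturbation, and a Hessian check along the connecting $\alpha$-face shows the strict concavity of $\alpha(1-\alpha)$ persists, precluding any interior minimum. The remaining ingredient is the cost equality $F(\boldsymbol v_{\mathrm{id}})=F(\boldsymbol v_\sigma)$, which by block expansion reduces to
\[
I_{1,T}(0,0)+I_{1,T}(\varepsilon,\varepsilon)-I_{1,T}(0,\varepsilon)-I_{1,T}(\varepsilon,0)=0,
\]
to be verified via the Legendre-transform expansions of $I_{1,T}$ near the boundary of $[0,1]^2$. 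This is the main obstacle: it requires a delicate joint smallness of $\varepsilon$ and $T$ in order to cancel the competing asymptotics of $I_{1,T}(\varepsilon,\varepsilon)$ (for which only an implicit optimality condition is available) against the explicit boundary rate functions $\mathcal{R}(\varepsilon|p_{01,T})$ and $\varepsilon P_1+(1-\varepsilon)P_0$ that govern $I_{1,T}(0,\varepsilon)$ and $I_{1,T}(\varepsilon,0)$.

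Distinctness of the two equivalence classes then follows from Lemma \ref{WTpath}. On the $16/25$-measure set of $[0,1]^2$ where $u$ and $r^\sigma$ disagree (the six blocks $(1,2),(2,1),(3,3),(1,3),(3,1),(2,2)$), the values of $h^*_{u\to r}(\,\cdot\,,\,\cdot\,,t)$ are drawn from $\{f^*_{0\to 0}(t),f^*_{\varepsilon\to\varepsilon}(t)\}$, whereas those of $h^*_{u\to r^\sigma}(\,\cdot\,,\,\cdot\,,t)$ are drawn from $\{f^*_{0\to\varepsilon}(t),f^*_{\varepsilon\to 0}(t)\}$, on the same blocks of measure $8/25$ each. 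At any interior $t\in(0,T)$ the four scalar paths are pairwise distinct (their endpoint data at $0$ and $T$ prevent any two from coinciding identically), so the multisets of graphon values on $[0,1]^2$ differ and no measure-preserving bijection can carry one graphon to the other; hence $\tilde h^*_{u\to r}\neq\tilde h^*_{u\to r^\sigma}$, completing the proof.
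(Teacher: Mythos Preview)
Your overall strategy (invoke Theorem \ref{Ap2Thm} and reduce to a minimisation over the transportation polytope $V$) matches the paper's. However, several steps in your execution contain genuine errors.

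\textbf{The cost equality is automatic, not an obstacle.} You write that $F(\boldsymbol v_{\mathrm{id}})=F(\boldsymbol v_\sigma)$ ``by block expansion reduces to $I_{1,T}(0,0)+I_{1,T}(\varepsilon,\varepsilon)-I_{1,T}(0,\varepsilon)-I_{1,T}(\varepsilon,0)=0$'' and flag this as the main obstacle. This is a miscalculation. With the given parameter values, the $3\times 3$ block tables of $(u,r)$ and $(u,r^\sigma)$ contain exactly the same multiset of $(u_{ij},\cdot)$ pairs with the same block-area weights (just rearranged), so $I_{1,T}(u,r)=I_{1,T}(u,r^\sigma)$ identically for all $\varepsilon,T$; indeed the paper notes this just above the proposition. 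The identity you write down is in fact false for small $T$ (the right-hand side is of order $\varepsilon\log(1/T)$ while the left-hand side is $O(1)$), so had it been required, your proof would collapse.

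\textbf{The perturbation from $\varepsilon=0$ is insufficient.} At $\varepsilon=0$ one has $r=r^\sigma$, so $\boldsymbol v_{\mathrm{id}}$ and $\boldsymbol v_\sigma$ yield the \emph{same} target graphon and the same path; the two optimisers are not separated in the path space at $\varepsilon=0$. More seriously, $V$ is four-dimensional, and your argument only examines the one-dimensional segment joining $\boldsymbol v_{\mathrm{id}}$ and $\boldsymbol v_\sigma$. Showing that no interior minimum appears on this segment under perturbation does not rule out minimisers elsewhere in $V$. The paper instead works directly at small $\varepsilon,T>0$, using the hierarchy $I_{1,T}(a',a')\ll I_{1,T}(0,\varepsilon)\ll I_{1,T}(\varepsilon,1)<I_{1,T}(0,1)$ from $I_{1,T}(a',b')=|a'-b'|\log(1/T)+O(1)$ to first force any minimiser into an $\varepsilon$-neighbourhood of $\boldsymbol v_1$ or $\boldsymbol v_2$, and then performs explicit local moves in the full polytope to show that strict improvement is possible unless $\boldsymbol v=\boldsymbol v_1$ or $\boldsymbol v=\boldsymbol v_2$.

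\textbf{The distinctness argument fails as written.} Your description of the values on the $16/25$-set where $r$ and $r^\sigma$ differ is incorrect: on those six blocks, $h^*_{u\to r}$ takes the values $f^*_{\varepsilon\to 0},\,f^*_{0\to\varepsilon},\,f^*_{0\to 0},\,f^*_{\varepsilon\to\varepsilon}$ on areas $\tfrac{4}{25}$ each, while $h^*_{u\to r^\sigma}$ takes the \emph{same} four values on the \emph{same} areas, just on different blocks. Hence the value multisets coincide, and a multiset comparison cannot separate the two equivalence classes. The paper's proof instead restricts to the diagonal blocks $A_1^2,A_2^2,A_3^2$, fixes a $t$ at which $f^*_{0\to 0}(t)$ differs from each of $f^*_{1\to 1}(t),f^*_{0\to\varepsilon}(t),f^*_{\varepsilon\to 0}(t)$, and then, for any $\phi\in\mathscr M$, uses that some $L_i^\phi=\{\phi(x)\in A_2:x\in A_i\}$ has measure at least $\tfrac{2}{15}$ to produce a uniform lower bound on $d_\square$, hence on $\delta_\square$.
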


\noindent
Through this example we are led to conclude that if the process begins near $\tilde u$ at time $0$ and is conditioned to end at $\tilde r$ at time $T$, then it may take one of two equally likely paths. Note that, in view of the counting and inverse counting lemmas \cite[Lemmas 10.23 and 10.32]{L}, by specifying graphons $\tilde u$ and $\tilde r$ at times $t=0$ and $t=T$ we are in effect specifying the subgraph density $t(H,\tilde f_{n,t})$ at times $t=0$ and $t=T$ for every simple graph $H$. By these same lemmas, Proposition \ref{ThreeBlocks} shows that, for some subgraphs $H$, the subgraph density $t \mapsto t(H, \tilde f_{n,t})$ may take one of two equally likely paths from $t(H,\tilde u)$ to $t(H, \tilde r)$.


\section{Proof of the two-point LDP}
\label{S3}

In this section we prove Theorem~\ref{tTwoLDP}. We settle the lower semi-continuity of the rate function $I_{1,t}(\tilde u,\cdot)$ in Section \ref{subsec:LSC}, the upper bound of the LDP in Section \ref{subsec:UB}, and the lower bound of the LDP in Section \ref{subsec:LB}.

Abbreviate $\tilde{\mathbb{P}}^{\tilde u}_{n,t}(\cdot) := \mathbb{P}(\tilde f_{n,t} \in \cdot \,|\, \tilde f_{n,0} = \tilde u)$ and $\mathbb{P}^{u}_{n,t}(\cdot): = \mathbb{P} ( f_{n,t} \in \cdot \,|\, f_{n,0} = u)$. As before, let $\mathscr{W}^{(I)} \subseteq \mathscr{W}$ denote the space of block graphons with $I^2\in{\mathbb N}$ blocks. Also define the $\varepsilon$-balls
\begin{equation}
\begin{aligned}
\tilde{\mathbb{B}}_\square(\tilde h, \varepsilon) 
&:= \{\tilde g \in \tilde{\mathscr{W}}\colon\, \delta_\square(\tilde h, \tilde g)\leq \varepsilon\}, \\
{\mathbb{B}}_\square(\tilde h, \varepsilon) 
&:= \{g \in {\mathscr{W}}\colon\, \delta_\square(\tilde h, \tilde g)\leq \varepsilon\}, \\
{\mathbb{B}}_\square(h, \varepsilon) 
&:= \{g \in {\mathscr{W}}\colon\, d_\square(h, g) \leq \varepsilon\}.
\end{aligned}
\end{equation}
Write $\tilde{\mathscr{W}}_n$ to denote the set of empirical graphons with $n$ vertices. 

We first state two properties of $I_{1,t}$, uniformity and convexity, that are needed along the way and are straightforward to verify.

\begin{lemma}
\label{UnifCont} 
For every $t >0$ and $u,h \in [0,1]$, 
\begin{equation}
I_{1,t}(u,h) \leq \max \left\{ -\log \pab[t], - \log (1-\pbb[t]) \right\} < \infty.
\end{equation}
Moreover, for $\eta,\varepsilon>0$, let
\begin{equation}
\label{LSC0}
\Delta_{I}(\eta,\varepsilon) := \max_{u,h \in [0,1], u' \in [u-\eta,u+\eta], h' \in [h-\varepsilon, h+\varepsilon]} 
| I_{1,t}(u,h)-I_{1,t}(u',h')|.
\end{equation}
Then $\lim_{\eta,\varepsilon\downarrow 0} \Delta_{I}(\eta,\varepsilon) = 0$. 
\end{lemma}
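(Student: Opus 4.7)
The plan is to derive both claims from the contraction identity
$$I_{1,t}(u,h) \;=\; \inf\bigl\{ u\,\mathcal{R}(a \mid p_{11,t}) + (1-u)\,\mathcal{R}(b \mid p_{01,t}) : a,b\in[0,1],\; ua+(1-u)b=h \bigr\},$$
which one obtains either by splitting $N\bar u_{t,n}$ into two independent binomials (the active edges arising from the $N\bar u$ initially active ones and those from the $N(1-\bar u)$ initially inactive ones) and invoking Cram\'er's theorem together with the contraction principle, or equivalently by directly computing the Legendre transform in \eqref{ef} using the explicit $J_{t,v}$ from \eqref{ee}. Both parts of the lemma will be read off from this representation.

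For the finite upper bound, I would plug in the feasible choice $a=b=h$ to obtain $I_{1,t}(u,h)\leq u\mathcal{R}(h\mid p_{11,t})+(1-u)\mathcal{R}(h\mid p_{01,t})$. Since $h\log h+(1-h)\log(1-h)\leq 0$ on $[0,1]$, the Bernoulli relative entropy admits the crude bound $\mathcal{R}(h\mid p)\leq -h\log p-(1-h)\log(1-p)\leq \max(-\log p,-\log(1-p))$ for every $p\in(0,1)$. The stationary probability $p^*=\lambda/(\lambda+\mu)$ sandwiches the two transition probabilities, $p_{01,t}\leq p^*\leq p_{11,t}$, so $-\log p_{11,t}\leq -\log p_{01,t}$ and $-\log(1-p_{01,t})\leq -\log(1-p_{11,t})$. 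Taking a convex combination with weights $u$ and $1-u$ produces the claimed bound $\max(-\log p_{01,t},-\log(1-p_{11,t}))$, which is finite for every $t>0$.

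For the uniform continuity, the key structural observation is that $J_{t,v}(u)$ in \eqref{ee} is \emph{affine} in $u$, so $I_{1,t}$ is the supremum of a family of jointly affine functions of $(u,h)$, hence jointly convex and lower semi-continuous on $[0,1]^2$. Together with the finite upper bound from the previous step, this yields a finite, convex, LSC function on the compact convex set $[0,1]^2$, which is automatically continuous on the interior $(0,1)^2$. To promote continuity to the boundary, I would match the LSC lower bound $\liminf I_{1,t}(u_n,h_n)\geq I_{1,t}(u_0,h_0)$ with the upper bound $\limsup I_{1,t}(u_n,h_n)\leq u_0\mathcal{R}(h_0\mid p_{11,t})+(1-u_0)\mathcal{R}(h_0\mid p_{01,t})$ obtained by passing to the limit in the estimate of the previous paragraph, using continuity of $\mathcal{R}(\cdot\mid p)$ in its first argument. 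Whenever the limit lies on the boundary (i.e.\ $u_0\in\{0,1\}$ or $h_0\in\{0,1\}$), the constraint $ua+(1-u)b=h$ forces the infimum in the contraction identity to be attained at $a=b=h_0$, so the upper bound coincides with $I_{1,t}(u_0,h_0)$ and one gets equality in the limit. Continuity on the compact set $[0,1]^2$ automatically upgrades to uniform continuity, which is exactly $\Delta_I(\eta,\varepsilon)\to 0$ as $\eta,\varepsilon\downarrow 0$.

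The main obstacle I anticipate is the rigorous justification that the contraction identity is an equality (rather than merely an upper bound), and the matching of the one-sided bounds at boundary points of $[0,1]^2$ where the convexity argument alone does not give continuity. A cleaner alternative route is to exploit the closed-form optimiser $v^*(u,h)$ recorded in the paper just after \eqref{ef}: substituting it back into \eqref{ef} produces an explicit expression for $I_{1,t}(u,h)$ that is manifestly continuous in $(u,h)$ on $[0,1]^2$, with one-sided limits computable by hand at the degenerate corners $h\in\{0,1\}$, after which both the boundedness and the uniform continuity statements of the lemma are immediate consequences of continuity on a compact set.
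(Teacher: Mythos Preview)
The paper does not actually prove this lemma: it is stated immediately after ``We first state two properties of $I_{1,t}$, uniformity and convexity, that are needed along the way and are \emph{straightforward to verify}'', and no proof is given. So there is nothing to compare your argument against except the implicit claim that it is routine.

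Your argument is correct. The contraction identity is exactly what one gets either by inf-convolution of the two Bernoulli rate functions or by taking the Legendre transform of \eqref{ee} directly. The bound via $a=b=h$ together with the ordering $p_{01,t}\leq p^*\leq p_{11,t}$ yields the displayed maximum cleanly. For uniform continuity, your scheme (joint convexity and lower semi-continuity from the sup-of-affine structure, finite continuous majorant $U(u,h)=u\mathcal{R}(h\mid p_{11,t})+(1-u)\mathcal{R}(h\mid p_{01,t})$, then matching $U=I_{1,t}$ on the boundary to close the gap between $\liminf$ and $\limsup$) works. One small imprecision: when $u_0\in\{0,1\}$ the constraint does not literally force $a=b=h_0$ (one of the two variables becomes irrelevant), but the conclusion $I_{1,t}(u_0,h_0)=U(u_0,h_0)$ still holds for the reason you give, so this does not affect the argument. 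The alternative you mention at the end, substituting the explicit root from the quadratic after \eqref{ef} and reading off continuity directly, is indeed the most elementary route and is presumably what the authors had in mind when calling the verification straightforward.
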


\begin{lemma}
\label{Convexity}
For every $t>0$ and $u,h \in [0,1]$, $u \mapsto I_{1,t}(u,h)$ is convex and $h \mapsto I_{1,t}(u,h)$ is strictly convex. Moreover, for every $u,h \in \mathscr{W}$ and $A, B \subseteq [0,1]$, 
\begin{equation}
\label{LSC1} 
\frac{1}{{\rm Leb}(A \times B)} \int_{A \times B} \ddd x\, \ddd y\,I_{1,t}(u(x,y),h(x,y))
\geq  I_{1,t}\big(\widebar u(A \times B), \widebar h(A \times B)\big),
\end{equation}
where 
\begin{equation}
\widebar u(A \times B) := \frac{1}{{\rm Leb}(A \times B)}\int_{A \times B} \ddd x\, \ddd y\,u(x,y),
\end{equation}
and $\widebar h(A \times B)$ is defined similarly.
\end{lemma}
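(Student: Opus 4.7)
My plan proceeds in three steps, in the order I would carry them out.

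\emph{Step 1 (joint convexity).} Inspection of \eqref{ee} shows that, for each fixed $v \in \mathbb{R}$, the map $u \mapsto J_{t,v}(u)$ is affine. Consequently $(u,w) \mapsto vw - J_{t,v}(u)$ is an affine function of $(u,w)$ for each fixed $v$. By \eqref{ef}, $I_{1,t}$ is the pointwise supremum over $v \in \mathbb{R}$ of this family of affine functions, and is therefore jointly convex in $(u,w)$. In particular, $u \mapsto I_{1,t}(u,h)$ is convex for each fixed $h$.

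\emph{Step 2 (strict convexity in the second argument).} For $t>0$, \eqref{ptdefs} gives $p_{01,t}, p_{11,t} \in (0,1)$, and a direct computation
\[
\partial_v^2 \log\bigl[(1-p) + e^v p\bigr] = \frac{e^v p(1-p)}{\bigl((1-p) + e^v p\bigr)^2} > 0, \qquad p \in (0,1),
\]
shows that $v \mapsto J_{t,v}(u)$ is smooth and strictly convex, as a convex combination (with weights $u$ and $1-u$) of two smooth strictly convex functions. Since $J_{t,v}(u)$ grows linearly as $v \to +\infty$ and is bounded as $v \to -\infty$, for each $w \in (0,1)$ the supremum in \eqref{ef} is attained at a unique $v^\star(w)$ satisfying $\partial_v J_{t,v}(u)|_{v=v^\star(w)} = w$. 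Standard Legendre duality (or the envelope theorem applied to the unique smooth maximizer) gives $\partial_w I_{1,t}(u,w) = v^\star(w)$, and $v^\star$ is strictly increasing in $w$ because $\partial_v J_{t,v}(u)$ is strictly increasing in $v$. Hence $w \mapsto I_{1,t}(u,w)$ is strictly convex on $(0,1)$.

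\emph{Step 3 (Jensen-type inequality).} Let $\nu$ denote the normalization of Lebesgue measure on $A \times B$ to a probability measure. Lemma \ref{UnifCont} ensures the integrand in \eqref{LSC1} is bounded. Combining Jensen's inequality with the joint convexity from Step 1 yields
\[
\frac{1}{{\rm Leb}(A \times B)} \int_{A \times B} I_{1,t}\bigl(u(x,y), h(x,y)\bigr)\, \ddd x\, \ddd y
\;\geq\; I_{1,t}\Bigl(\!\int u\, d\nu,\ \int h\, d\nu\Bigr),
\]
which is precisely \eqref{LSC1} upon identifying the inner integrals as $\widebar u(A \times B)$ and $\widebar h(A \times B)$.

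I expect the main obstacle to be Step 2, where one must track the effective domain of $I_{1,t}(u,\cdot)$ carefully and invoke Legendre duality to upgrade the joint convexity of Step 1 to strict convexity in the second argument; Steps 1 and 3 are essentially immediate from the Legendre representation of $I_{1,t}$ and the affine dependence on $u$ of $J_{t,v}(u)$.
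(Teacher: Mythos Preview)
Your proposal is correct; the paper itself gives no proof of this lemma, merely introducing it (together with Lemma~\ref{UnifCont}) as a property that is ``straightforward to verify.'' Your three-step argument---joint convexity via the Legendre representation \eqref{ef} over affine functions, strict convexity in $h$ via smoothness and strict convexity of $v\mapsto J_{t,v}(u)$, and then Jensen---is exactly the natural verification the authors are gesturing at, and nothing more is needed. One cosmetic point: in Step~2 you establish strict convexity only on $(0,1)$, but since $I_{1,t}(u,\cdot)$ is finite on $[0,1]$ by Lemma~\ref{UnifCont} and convex by Step~1, strict convexity on the open interval automatically propagates to the closed interval.
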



\subsection{Lower semi-continuity}
\label{subsec:LSC}

We first establish that $I_{1,t}(\tilde u, \cdot)$ is a good rate function, i.e., $I_{1,t}(\tilde u, \cdot) \not\equiv \infty$ and $\tilde x \mapsto \tilde I_{1,t}(\tilde u, \tilde x)$ has compact level sets. Because $\tilde{\mathscr{W}}$ is compact, it suffices to show that $\tilde x \mapsto \tilde I_{1,t}(\tilde u, \tilde x)$ is lower semi-continuous. We will in fact show that $(\tilde u, \tilde x) \mapsto \tilde I_{1,t}(\tilde u, \tilde x)$ is lower semi-continuous, because this stronger property is needed below.

\begin{lemma}
\label{LwrSC}
Suppose that $\lim_{n\to\infty} \delta_\square(\tilde u_n,\tilde u) = 0$ and $\lim_{n\to\infty} \delta_\square(\tilde h_n,\tilde h) = 0$. Then 
\begin{equation}
\liminf_{n \to \infty} \tilde{I}_{1,t}(\tilde u_n, \tilde h_n) \geq \tilde I_{1,t}(\tilde u, \tilde h).
\end{equation}
\end{lemma}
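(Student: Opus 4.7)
The plan is to establish lower semi-continuity of the un-quotiented functional $I_{1,t}$ on $(\mathscr{W}^2, d_\square \times d_\square)$ via block-graphon approximation, and then lift this to the quotient through a joint alignment argument based on compactness of the pair quotient space.

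\emph{First, lower semi-continuity on $\mathscr{W}^2$.} For $N \in \mathbb{N}$ let $P_i^{(N)} := [(i-1)/N, i/N)$ and write $\bar u_{ij}^{(N)} := N^2 \int_{P_i^{(N)} \times P_j^{(N)}} u$ for the block average of $u$; define $\bar h_{ij}^{(N)}$ analogously and set
\begin{equation*}
I^{(N)}_{1,t}(u,h) := \frac{1}{N^2} \sum_{i,j=1}^N I_{1,t}\bigl(\bar u_{ij}^{(N)}, \bar h_{ij}^{(N)}\bigr).
\end{equation*}
Rectangle-by-rectangle application of Lemma \ref{Convexity} yields $I^{(N)}_{1,t}(u,h) \leq I_{1,t}(u,h)$, while Lebesgue differentiation together with dominated convergence (using the uniform boundedness from Lemma \ref{UnifCont}) gives $I^{(N)}_{1,t}(u,h) \uparrow I_{1,t}(u,h)$ along dyadic refinements $N=2^k$. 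Since each block average is Lipschitz in $d_\square$ (take $S = P_i^{(N)}$, $T = P_j^{(N)}$ in the definition of the cut norm) and $I_{1,t}$ is uniformly continuous on $[0,1]^2$ by Lemma \ref{UnifCont}, every $I^{(N)}_{1,t}$ is continuous on $(\mathscr{W}^2, d_\square \times d_\square)$. Hence $I_{1,t}$ is lower semi-continuous there as a pointwise supremum of continuous functionals.

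\emph{Second, joint alignment in the quotient.} Assume $\liminf_n \tilde I_{1,t}(\tilde u_n, \tilde h_n) < \infty$ and pass to a subsequence realising it. For each $n$, pick representatives $u_n \in \tilde u_n$, $h_n \in \tilde h_n$ and $\sigma_n \in \mathscr{M}$ with $I_{1,t}(u_n, h_n^{\sigma_n}) \leq \tilde I_{1,t}(\tilde u_n, \tilde h_n) + 1/n$, and set $v_n := h_n^{\sigma_n}$. Consider the quotient of $\mathscr{W}^2$ under the \emph{joint} action of a single $\rho \in \mathscr{M}$, i.e.\ $(u_1,h_1) \approx (u_2, h_2)$ iff $u_1 = u_2^\rho$, $h_1 = h_2^\rho$, equipped with $\inf_\rho \max(d_\square(u_1, u_2^\rho), d_\square(h_1, h_2^\rho))$. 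A direct extension of the Lovász--Szegedy compactness proof (apply a Szemerédi-type regularity approximation to a common refinement for both components, then extract limits of the finitely many block parameters) shows this space is compact. Hence along a further subsequence there exist $\rho_n \in \mathscr{M}$ and $(u_*, v_*) \in \mathscr{W}^2$ with $d_\square(u_n^{\rho_n}, u_*) \to 0$ and $d_\square(v_n^{\rho_n}, v_*) \to 0$; since $\delta_\square \leq d_\square$, the separate quotient-metric convergences of $\tilde u_n$ and $\tilde h_n = \tilde v_n$ force $\tilde u_* = \tilde u$ and $\tilde v_* = \tilde h$.

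\emph{Conclusion and main obstacle.} By joint permutation invariance $I_{1,t}(u^\rho, v^\rho) = I_{1,t}(u,v)$ (change of variables) and the first step,
\begin{equation*}
\liminf_n I_{1,t}(u_n, v_n) = \liminf_n I_{1,t}(u_n^{\rho_n}, v_n^{\rho_n}) \geq I_{1,t}(u_*, v_*) \geq \tilde I_{1,t}(\tilde u, \tilde h),
\end{equation*}
which together with the near-optimality of $\sigma_n$ yields the desired bound. The main technical obstacle is the second step: the hypothesis only provides \emph{independent} aligning permutations for the two sequences in $\delta_\square$, whereas evaluating $I_{1,t}$ requires both components to be permuted by the same bijection. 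Compactness of the pair (colored) quotient space is the natural device to force simultaneous alignment by a single $\rho_n$, at the cost of possibly changing the limiting representatives $(u_*, v_*)$ within the equivalence classes $(\tilde u, \tilde h)$.
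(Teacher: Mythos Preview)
Your argument is correct and takes a genuinely different route from the paper. Both proofs begin the same way: block-averaging plus the Jensen inequality of Lemma~\ref{Convexity} exhibits $I_{1,t}$ as an increasing limit of functionals that are continuous in $d_\square\times d_\square$ (because block averages over fixed rectangles are $d_\square$-Lipschitz), yielding lower semi-continuity of $I_{1,t}$ on $\mathscr{W}^2$. The divergence is in how the quotient is handled. The paper never invokes compactness of a pair space; instead it works with \emph{arbitrary} finite measurable partitions $\{A_i\}$ rather than dyadic ones, so that for any near-optimal $\sigma_n$ the partition can be adapted to $\sigma_n$ (via the sets $C_{J(i-1)+j}=\{x:x\in A_i,\ \sigma^{-1}(x)\in B_j\}$) to recover $\inf_\sigma I_{1,t}(u,h^\sigma)$ exactly for block graphons, with a final $L^2$-approximation for general $u,h$. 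Your approach instead fixes dyadic partitions and offloads the alignment problem to compactness of the joint quotient $(\mathscr{W}^2/\!\approx)$ under a single $\rho$, i.e.\ compactness of $[0,1]^2$-decorated (two-coloured) graphons, to produce a common $\rho_n$ aligning both coordinates simultaneously. This is cleaner and more modular---it separates ``$I_{1,t}$ is $d_\square$-l.s.c.'' from ``pass to the quotient''---but it imports a nontrivial extension of the Lov\'asz--Szegedy compactness theorem that the paper does not otherwise need; the paper's adaptive-partition argument is more hands-on but entirely self-contained with the tools already on the table.
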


\begin{proof}
Let $\mathcal{A}_k$ be the set of all $k$-set partitions of $[0,1]$. In particular, $\mathcal{A}_k$ includes any $\{ A_i\}_{i=1}^k$ such that $A_i \subseteq [0,1]$, $A_{i} \cap A_j = \emptyset$ for $i,j\in[k]$ with $i\not=j$,  and $\cup_{i=1}^k A_i = [0,1]$. Let $(\sigma_n)_{n\in\mathbb{N}}$ be any sequence of elements in $\mathscr{M}$. Recalling \eqref{efg}--\eqref{MCP2} and applying Lemmas~\ref{UnifCont}--\ref{Convexity}, we obtain, for any $k \in \mathbb{N}$,
\begin{equation}
\begin{aligned}
&\liminf_{n \to \infty} I_{1,t}(u_n, h_n^{\sigma_n})\\
&\geq \liminf_{n \to \infty} \sup_{\{A_i\}_{i=1}^k \in \mathcal{A}_k} 
\sum_{i,j=1}^k {\rm Leb}(A_i \times A_j) I_{1,t}\big(\widebar u_n(A_i \times A_j) , \widebar{ h_n^{\sigma_n}}(A_i \times A_j)\big) \\
&\geq \liminf_{n \to \infty} \sup_{\{A_i\}_{i=1}^k \in \mathcal{A}_k} \sum_{i,j=1}^k {\rm Leb}(A_i \times A_j)\\
&\qquad \times \big[ I_{1,t}\big(\widebar u(A_i \times A_j) , \widebar{ h^{\sigma_n}}(A_i \times A_j)\big) 
- \Delta_I\big(d_\square(u_n,u), d_\square(h_n,h)\big) \big]\\
&\geq \inf_{\sigma \in \mathscr{M}}  \sup_{\{A_i\}_{i=1}^k \in \mathcal{A}_k} 
\sum_{i,j=1}^k {\rm Leb}(A_i \times A_j) I_{1,t}\big(\widebar u(A_i \times A_j) , \widebar{ h^{\sigma}}(A_i \times A_j)\big).
\end{aligned}
\end{equation}
The proof is complete once we show that 
\begin{equation}
\label{LSC2}
\lim_{k \to \infty} \inf_{\sigma \in \mathscr{M}}  \sup_{\{A_i\}_{i=1}^k \in \mathcal{A}_k}
\sum_{i,j=1}^k {\rm Leb}(A_i \times A_j) I_{1,t}\big(\widebar u(A_i \times A_j),\widebar{ h^{\sigma}}(A_i \times A_j)\big) 
= \inf_{\sigma \in \mathscr{M}} I_{1,t} (u, h^\sigma).
\end{equation}

\medskip\noindent
{\bf 1.} 
First we establish \eqref{LSC2} when $u$ and $h$ are \emph{block graphons}, i.e., $u \in \mathscr{W}^{(I)}$ and $h \in \mathscr{W}^{(J)}$. Let $0=a_0<a_1< \dots < a_I=1$ and $0=b_0<b_1< \dots < b_J=1$ denote their block endpoints, $A_i=[a_{i-1}, a_i)$ and $B_i=[b_{i-1}, b_i)$ their block intervals, and $\{u_{ij}\}_{1 \leq i,j \leq I}$ and $\{h_{\ell m}\}_{1 \leq \ell,m \leq J}$ their block values. (For example, if $x \in A_i$ and $y \in A_j$, then $u(x,y)=u_{ij}$.) For $ i \in[I]$ and $ j \in[J]$, let 
\begin{equation}
\label{CJijdef}
C_{J(i-1)+j} := \left\{x \in [0,1]\colon\, x \in A_i, \sigma^{-1}(x) \in B_j \right\}, 
\end{equation}
Suppose that $k=IJ$. Then \eqref{CJijdef} defines a sequence of sets $C_1,\ldots,C_k$. We have 
\begin{equation}
\label{Csums}
\begin{aligned}
\sum_{i,j=1}^k{\rm Leb}(C_i \times C_j) 
&\,I_{1,t}\big(\widebar u(C_i \times C_j),\widebar{ h^{\sigma}}(C_i \times C_j)\big)\\
&= \sum_{ i,j \in[ I], \:\ell, m \in[ J]} {\rm Leb}( C_{J(i-1)+\ell} \times C_{J(j-1) +m})\,  I_{1,t}(u_{ij},h_{\ell m})\\
&= \int_{[0,1]^2} \ddd x\,\ddd y \, I_{1,t}\big(u(x,y), h^\sigma(x,y)\big) = I_{1,t}(u,h^\sigma),
\end{aligned}
\end{equation}
where the first and second equality are obtained by observing that $u$ and $h^\sigma$ are constant on $C_i \times C_j$. Since this holds for any $\sigma \in \mathscr{M}$ and $k \geq IJ$ (for $k > I J$ simply take $C_i=\emptyset$ for all $i > I J$), we have established \eqref{LSC2}.

To explain the above in a bit more detail, suppose that each point $x \in [0,1]$ is a vertex. Because $u$ and $h$ are block graphons with $I$ and $J$ blocks, respectively, we can think of vertices as being of type $1, \dots, I$ at time 0 and of type $1, \dots, J$ at time $t$. We would like $C_{J(i-1)+j}$ to contain all vertices of type $i$ at time 0 and of type $j$ at time $t$. It is clear that this means that $x \in A_i$. However, because we have applied an arbitrary permutation $\sigma$ to $h$ to get $h^\sigma$, the types of all the vertices at time $t$ have been mixed up. Nonetheless, we know that vertex $x$ is of type $j$ at time $t$ if it maps to block $j$ in $h$ when the permutation $j$ is undone (i.e., $\sigma^{-1}(x) \in B_j$). Now, $C_{J(i-1)+j}$ contains all vertices that are of type $i$ at time 0 and of type $j$ at time $t$. Hence, to arrive at \eqref{Csums}, simply note that the density of edges between the vertices in $C_{J(i-1)+j}$ and $C_{J(i'-1)+j'}$ at time 0 is $u_{i i'}$, and that the density of edges between the vertices in $C_{J(i-1)+j}$ and $C_{J(i'-1)+j'}$ at time $t$ is $h_{j j'}$.

\medskip\noindent
{\bf 2.}
Next we establish \eqref{LSC2} when $u$ and $h$ are not block graphons by relying on a limiting argument. For $\ell \in \mathbb{N}$ and $g \in \mathscr{W}$, let $\hat g^{(\ell)} \in \mathscr{W}^{(\ell)}$ be the block graphon such that if $i,j \in [\ell]$ and $(x,y) \in [\frac{i-1}{\ell}, \frac{i}{\ell}) \times [\frac{j-1}{\ell}, \frac{j}{\ell}) =: B_{ij}^{(\ell)}$, then
\begin{equation}
\hat g^{(\ell)}(x,y) = \ell^2 \int_{B_{i,j}^{(\ell)}} \ddd x'\, \ddd y'\, g(x',y').
\end{equation}
Applying Lemma \ref{UnifCont}, we have, for any $\sigma \in \mathscr{M}$,
\begin{equation}
\label{LSCEq3}
\begin{aligned}
&|I_{1,t}(u,h^\sigma) - I_{1,t}(\hat u^{(\ell)},(\hat h^{(\ell)})^\sigma) | \\
&\leq \int_{[0,1]^2} \ddd x\, \ddd y\, \Delta_{I}\big(u(x,y)-\hat u^{(\ell)}(x,y), h(x,y) - (\hat h^{(\ell)})^\sigma(x,y)\big) \\
&\leq \Delta_I(\varepsilon, \eta) +  2\max \left\{ -\log \pab[t], - \log (1-\pbb[t]) \right\} \\
&\quad\times {\rm Leb}\Big\{(x,y) \in [0,1]^2\colon\, |u(x,y)-\hat u^{(\ell)}(x,y)| 
\geq \varepsilon \text{ or }  |h(x,y)-\hat h^{(\ell)}(x,y)| \geq \eta \Big\}.
\end{aligned}
\end{equation}
Because $\hat u^{(\ell)} \to u$ and $\hat h^{(\ell)} \to h$ in $L^2$ as $\ell \to \infty$ (\cite[Proposition 2.6]{C}), for any $\varepsilon, \eta >0$ the second term in the right-hand side of \eqref{LSCEq3} tends to 0 as $\ell \to \infty$. Letting $\varepsilon, \eta \downarrow 0$ and applying Lemma \ref{UnifCont} once more, we obtain 
\begin{equation}
\lim_{\ell \to \infty} \inf_{\sigma \in \mathscr{M}} I_{1,t} (\hat u^{(\ell)}, (\hat h^{(\ell)})^\sigma)  
= \inf_{\sigma \in \mathscr{M}} I_{1,t}(u,h^\sigma).
\end{equation}
Noting that convergence in $L^2$ implies convergence in the cut distance, and using the fact that we have already established \eqref{LSC2} for block graphons, we find
\begin{equation}
\begin{aligned}
\lim_{k \to \infty} &\inf_{\sigma \in \mathscr{M}}  \sup_{\{A_i\}_{i=1}^k \in \mathcal{A}_k}
\sum_{i,j=1}^k {\rm Leb}(A_i \times A_j) I_{1,t}\big(\widebar u(A_i \times A_j),\widebar{ h^{\sigma}}(A_i \times A_j)\big) \\
&=\lim_{\ell \to \infty}\lim_{k \to \infty} \inf_{\sigma \in \mathscr{M}}  \sup_{\{A_i\}_{i=1}^k \in \mathcal{A}_k}
\sum_{i,j=1}^k {\rm Leb}(A_i \times A_j) I_{1,t}\Big(\overline{\hat{u}^{(\ell)}}(A_i \times A_j)
\overline{( \hat{h}^{(\ell)})^{\sigma}}(A_i \times A_j)\Big)  \\
&=\lim_{\ell \to \infty} \inf_{\sigma \in \mathscr{M}} I_{1,t}\big(\hat u^{(\ell)}, (\hat h^{(\ell)})^\sigma\big)  
= \inf_{\sigma \in \mathscr{M}} I_{1,t}(u,h^\sigma),
\end{aligned}
\end{equation}
which completes the proof of \eqref{LSC2}.
\end{proof}


\subsection{Upper bound}
\label{subsec:UB}

We start by observing that
\begin{equation}
\label{MCP}
\mathbb{P}(\tilde f_{n,t} \in \, \cdot  \mid \tilde f_{n,0}=\tilde u_n)
= \mathbb{P}(\tilde f_{n,t} \in \, \cdot \mid f_{n,0}=u_n).
\end{equation}
Indeed, due to the fact that the dynamics is homogeneous, the outcome of $\tilde f_{n,t}$ is independent of the specific representative of $\tilde u_n$. We first establish the upper bound when $u \in \mathscr{W}^{(I)}$ for some $I\in{\mathbb N}$, i.e., the limiting initial graphon has a block structure. Afterwards we can use a limiting argument to obtain the upper bound for $u \in \mathscr{W}$, which we will not spell out. 

\begin{lemma}
Suppose that $u \in \mathscr{W}^{(I)}$ for some $I\in{\mathbb N}$, and $u_n^\eta \in \mathbb{B}_\square(u,\eta)$ for all $\eta>0$ and $n$ large enough. Then 
\begin{equation}
\lim_{\eta \downarrow 0} \limsup_{n \to \infty} \frac{1}{{n \choose 2}} \log 
\mathbb{P}(\tilde f_{n,t}  \in \tilde C \mid f_{n,t}=u_n^\eta) \leq -\inf_{\tilde x \in \tilde C} \tilde I_{1,t}(\tilde u,\tilde x)
\end{equation}
for any closed set $\tilde C \subseteq \tilde{\mathscr{W}}$.
\end{lemma}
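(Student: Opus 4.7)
The plan is to reduce the upper bound to a finite-dimensional LDP on block averages and then pass to the limit in the block size. Since $(\tilde{\mathscr{W}}, \delta_\square)$ is compact and $\tilde C$ is closed, a standard finite-covering argument reduces the task to showing, for every $\tilde h \in \tilde{\mathscr{W}}$ and every $\gamma > 0$,
\begin{equation}
\lim_{\eta \downarrow 0}\, \lim_{\varepsilon \downarrow 0}\, \limsup_{n \to \infty} \frac{1}{\binom{n}{2}} \log \mathbb{P}\!\left(\tilde f_{n,t} \in \tilde{\mathbb{B}}_\square(\tilde h, \varepsilon) \,\Big|\, f_{n,0} = u_n^\eta\right) \leq -\tilde I_{1,t}(\tilde u, \tilde h) + \gamma.
\end{equation}

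For the combinatorial step I would pick a large $M \in I \mathbb{N}$ so that the block endpoints of $u$ all lie in $\{k/M \colon 0 \leq k \leq M\}$; then $u$ is exactly constant on each cell $B^{(M)}_{k\ell} := [\tfrac{k-1}{M}, \tfrac{k}{M}) \times [\tfrac{\ell-1}{M}, \tfrac{\ell}{M})$ with value $u_{k\ell}$. Under $f_{n,0} = u_n^\eta$ with $d_\square(u_n^\eta, u) \leq \eta$, the initial number of active edges in each cell differs from $u_{k\ell}(n/M)^2$ by at most $\eta n^2$, directly from applying the definition of cut distance to the rectangle $B^{(M)}_{k\ell}$. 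At time $t$ each edge is independently active with probability $p_{11,t}$ or $p_{01,t}$ according to its initial state, so the block-average vector $\boldsymbol V_n := (\overline u_{t,n}(B^{(M)}_{k\ell}))_{k,\ell \in [M]}$ is a sum of $O(n^2)$ independent bounded random variables. The cumulant computation underlying \eqref{cumlim}--\eqref{ef}, carried out jointly across blocks by independence, together with the G\"artner-Ellis theorem, yields an LDP for $\boldsymbol V_n$ on $[0,1]^{M \times M}$ with rate $\binom{n}{2}$ and good rate function
\begin{equation}
I^{(M)}(\boldsymbol v) := \frac{1}{M^2} \sum_{k,\ell=1}^M I_{1,t}(u_{k\ell}, v_{k\ell}),
\end{equation}
up to an error that vanishes as $\eta \downarrow 0$, uniformly on compacts, by Lemma~\ref{UnifCont}.

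Transferring this back to $\tilde{\mathscr{W}}$ is the heart of the argument. If $\tilde f_{n,t} \in \tilde{\mathbb{B}}_\square(\tilde h, \varepsilon)$, there exists $\sigma \in \mathscr{M}$ with $d_\square(f_{n,t}, h^\sigma) \leq 2\varepsilon$ on some representative. Taking $M$-block averages is continuous in $d_\square$ because each block average is a single rectangle integral, so $\boldsymbol V_n$ lies within $\varepsilon M^2$ of the $M$-block average array $\overline{h^\sigma}^{(M)}$. Applying the finite-dimensional upper bound and minimising over $\sigma$ gives
\begin{equation}
\limsup_{n \to \infty} \frac{1}{\binom{n}{2}} \log \mathbb{P}(\tilde f_{n,t} \in \tilde{\mathbb{B}}_\square(\tilde h, \varepsilon) \mid f_{n,0} = u_n^\eta) \leq -\inf_{\sigma \in \mathscr{M}} I^{(M)}\!\left(\overline{h^\sigma}^{(M)}\right) + o_{\varepsilon, \eta}(1).
\end{equation}
Sending $M \to \infty$, the right-hand side converges to $-\inf_{\sigma \in \mathscr{M}} I_{1,t}(u, h^\sigma) = -\tilde I_{1,t}(\tilde u, \tilde h)$ by the block-graphon approximation identity \eqref{LSC2} established in Lemma~\ref{LwrSC}, which closes the local bound.

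The main obstacle is precisely this last transfer: a graphon close to $\tilde h$ in $\delta_\square$ corresponds to a graphon close in $d_\square$ only after some unknown permutation $\sigma \in \mathscr{M}$, so the $\sigma$-infimum has to be carried inside the $M$-block LDP and then shown, as $M \to \infty$, to converge to the infimum in the definition of $\tilde I_{1,t}$. This is exactly where the block-graphon limit from Step~2 of the proof of Lemma~\ref{LwrSC} is indispensable; the $\varepsilon \downarrow 0$ and $\eta \downarrow 0$ limits are then controlled via Lemma~\ref{UnifCont}.
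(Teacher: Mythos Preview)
There is a genuine gap in the transfer step. Your fixed-$M$-grid block-average LDP is correct, but the bound you extract from it,
\[
\limsup_{n\to\infty}\frac{1}{\binom{n}{2}}\log\mathbb{P}\bigl(\tilde f_{n,t}\in\tilde{\mathbb{B}}_\square(\tilde h,\varepsilon)\bigr)
\;\leq\; -\inf_{\sigma\in\mathscr{M}} I^{(M)}\!\bigl(\overline{h^\sigma}^{(M)}\bigr)+o(1),
\]
is too weak, and sending $M\to\infty$ does not repair it. The difficulty is that the infimum over $\sigma$ is taken \emph{after} the partition (the $M$-grid) has been fixed, whereas in the identity \eqref{LSC2} the supremum over partitions sits \emph{inside} the infimum over $\sigma$. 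Reversing the order costs everything: for any $M$, one can choose $\sigma$ to be an interval exchange at scale $\ll 1/M$ that ``mixes'' $h$, so that every $M$-block average of $h^\sigma$ is close to the global mean $\bar h=\int h$. Hence $\inf_\sigma I^{(M)}(\overline{h^\sigma}^{(M)})\leq I_{1,t}(u,\bar h)+o(1)$ for all $M$, and by strict convexity (Lemma~\ref{Convexity}) this is strictly smaller than $\tilde I_{1,t}(\tilde u,\tilde h)$ whenever $h$ is not constant. Concretely, take $u\equiv\tfrac12$ and $h$ the two-block graphon with values $1$, $0$, $\tfrac12$: your bound degenerates to $-I_{1,t}(\tfrac12,\tfrac12)$, which is far above $-I_{1,t}(\tfrac12,h)$.

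The paper circumvents this by never taking an infimum over the uncountable set $\mathscr{M}$ on the probability side. Instead it (i) invokes Szemer\'edi's regularity lemma to replace $\tilde{\mathbb{B}}_\square(\tilde h,\varepsilon)$ by a \emph{finite} union of $d_\square$-balls around block graphons $g\in\mathscr{W}(\varepsilon)$, and (ii) for each such $g$, replaces the union over all vertex permutations $\sigma_n\in\mathscr{M}_n$ by a \emph{finite} set $\mathcal{T}$ of canonical permutations indexed by transport matrices $\boldsymbol v\in V$ between the blocks of $u$ and those of $g$. The block-level LDP is then applied with the partition determined by the pair $(u,g)$ and the specific $\tau\in\mathcal{T}$, not by a grid chosen in advance; this is exactly what recovers $I_{1,t}(u^\tau,g)$ rather than a Jensen-averaged version. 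Without a device of this kind---either Szemer\'edi plus a finite $\mathcal{T}$, or some other way to discretise $\mathscr{M}$ before applying the union bound---the argument does not close.
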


\begin{proof}
By \cite[Lemma 4.1]{C}, it suffices to prove that, for all $\tilde h \in \tilde{\mathscr{W}}$,
\begin{equation}
\lim_{\varepsilon \downarrow 0} \lim_{\eta \downarrow 0} \limsup_{n \to \infty} \frac{1}{{n \choose 2}}\log 
\mathbb{P}( \tilde f_{n,t} \in \tilde{\mathbb{B}}_\square(\tilde h, \varepsilon) \mid \tilde f_{n,0} 
= \tilde u_n^\eta) \leq - \tilde I_{1,t}(\tilde u,\tilde h),
\end{equation}
which is equivalent to
\begin{equation}
\label{tltgnq}
\lim_{\varepsilon \downarrow 0} \lim_{\eta \downarrow 0} \limsup_{n \to \infty} \frac{1}{{n \choose 2}}
\log \mathbb{P}( f_{n,t} \in {\mathbb{B}}_\square(\tilde h, \varepsilon) \mid f_{n,0} 
= u_n^\eta) \leq - \tilde I_{1,t}(\tilde u,\tilde h),
\end{equation}
by which we have transferred the problem from $\tilde{\mathscr{W}}$ to $\mathscr{W}$. Note that to get \eqref{tltgnq} we have applied \eqref{MCP} to replace $\tilde f_{n,0} = \tilde u^\eta_n$ by $f_{n,0} = u^\eta_n$ in the condition. Since \eqref{MCP} holds for any $u^\eta_n$ in the equivalence class $\tilde u_n^\eta$, we may assume that there exists a $u$ in the equivalence class $\tilde u$ such that $d_\square(u^\eta_n, u)<\eta$ for all $n$ large enough. The proof consists of 6 steps.

\medskip\noindent
{\bf 1.}
In contrast to $\tilde{\mathbb{B}}_{\square}(\tilde h, \varepsilon)$, whose elements cling tightly to $\tilde h$, the elements of ${\mathbb{B}}_{\square}(\tilde h, \varepsilon)$ are scattered throughout $\mathscr{W}$. We therefore need a systematic method of collecting these elements. To this end we recall a version of Szemer\'edi's regularity lemma \cite[Theorem 3.1]{C}, which states that for any $\varepsilon >0$ there exist a constant $C(\varepsilon)<\infty$ and a set $\mathscr{W}(\varepsilon) \subseteq \mathscr{W}$ with $| \mathscr{W}(\varepsilon)| \leq C(\varepsilon)$ such that for any $f \in \mathscr{W}$ there exist $\phi \in \mathscr{M}$ and $g \in \mathscr{W}(\varepsilon)$ satisfying $d_\square(f^\phi, g)<\varepsilon$, and that for any $g \in \mathscr{W}(\varepsilon)$ there exists a $J\in {\mathbb N}$ such that $g \in \mathscr{W}^{(J)}$. Thus, if we let 
\begin{equation}
\mathbb{B}_\square( \mathscr{W}(\varepsilon),\varepsilon) 
= \left\{ f \in \mathscr{W}\colon\, \min_{g \in \mathscr{W}(\varepsilon)} d_\square(g,f) \leq \varepsilon \right\},
\end{equation}
then
\begin{equation}
\begin{aligned}
\{ f_{n,t} \in \mathbb{B}_\square (\tilde h, \varepsilon)\} 
&\subseteq \{ f_{n,t} \in \mathbb{B}_\square(\tilde h, \varepsilon) \} \cap 
\Big( \bigcup_{\sigma_n \in \mathscr{M}_n} \{ f^{\sigma_n}_{n,t} \in 
\mathbb{B}_{\square}(\mathscr{W}(\varepsilon),\varepsilon) \} \Big) \\
&= \bigcup_{g \in \mathscr{W}(\varepsilon)} \bigcup_{\sigma_n \in \mathscr{M}_n} 
\{ f_{n,t} \in \mathbb{B}_\square(\tilde h, \varepsilon) \} \cap
 \{ f_{n,t}^{\sigma_n} \in \mathbb{B}_\square(g,\varepsilon) \},
\end{aligned}
\end{equation}
where $\mathscr{M}_n$ is the set of permutations of the $n$ intervals of length $1/n$ in $[0,1]$. Because $\mathscr{W}(\varepsilon)$ is finite, it is enough to show that 
\begin{equation}
\begin{aligned}
\label{TSG}
\lim_{\eta \downarrow 0}\limsup_{n \to \infty}
& \frac{1}{{n \choose 2}}\log \mathbb{P}^{u^\eta_n}\Big(\bigcup_{\sigma_n \in \mathscr{M}_n} 
\{ f_{n,t} \in \mathbb{B}_\square(\tilde h, \varepsilon) \} \cap 
\{ f_{n,t}^{\sigma_n} \in \mathbb{B}_\square(g,\varepsilon) \} \Big) \\
&\leq - \tilde I_{1,t}(u,\tilde h) + E(\varepsilon) \qquad \forall\,g \in \mathscr{W}(\varepsilon),
\end{aligned}
\end{equation}
where $E(\varepsilon)$ must vanish as $\varepsilon \downarrow 0$. Note that the event in \eqref{TSG} is empty when $\delta_\square(\tilde g, \tilde h)> 2 \varepsilon$. We thus only need to establish \eqref{TSG} when $\delta_\square(\tilde g, \tilde h) \leq 2 \varepsilon$. Observe that the left-hand side of \eqref{TSG} is at most 
\begin{equation}
\begin{aligned}
\lim_{\eta \downarrow 0} \limsup_{n \to \infty} \frac{1}{{n \choose 2}} \log \mathbb{P}^{u^\eta_n} 
&\Big( \bigcup_{\sigma_n \in \mathscr{M}_n} \{ f_{n,t}^{\sigma_n} \in 
\mathbb{B}_\square(g,\varepsilon) \} \Big)\\
&\leq \lim_{\eta \downarrow 0} \limsup_{n \to \infty} \frac{1}{{n\choose 2}} \max_{\sigma_n \in \mathscr{M}_n} 
\log \mathbb{P}^{u^\eta_n} \left( f_{n,t}^{\sigma_n} \in \mathbb{B}_\square(g,\varepsilon) \right) 
\label{TSG1}
\end{aligned}
\end{equation}
because $\log(n!) = o(\binom{n}{2})$. To bound the right-hand side of \eqref{TSG1}, we show that we can replace $\mathscr{M}_n$ by a finite set $\mathcal{T}=\mathcal{T}(I,J,\eta)$ (whose cardinality does not depend on $n$) without incurring a significant error.

\medskip\noindent
{\bf 2.} We construct the set $\mathcal{T}=\mathcal{T}(I,J,\eta)$ as in the proof of \cite[Lemma 3.3]{DS}. Recall that $u \in \mathscr{W}^{(I)}$ and $g \in \mathscr{W}^{(J)}$, and write $0=a_0<a_1<\dots<a_I=1$ and $0=b_0 < b_1<\dots < b_J=1$ to denote their block endpoints. Define the intervals $A_i=[a_{i-1},a_i)$ and $B_{j}=[b_{j-1},b_j)$. Let
\begin{equation}
\label{Vdef}
V:= \left\{ (v_{i,j})_{i \in [I], j \in [J]}\colon\, v_{ij} \in (0,1), \sum_{j \in [J]} v_{ij} = {\rm Leb}(A_i),  
\sum_{i \in [I]} v_{ij} = {\rm Leb}(B_j) \right\},
\end{equation}
and for ${\boldsymbol v}  \in V$ define 
\begin{equation}
A_{ij} := \left[ a_i + \sum_{k=1}^{j-1} v_{i,k}, a_i + \sum_{k=1}^j v_{i,k} \right), 
\qquad B_{ji} := \left[ b_j + \sum_{k=1}^{i-1} v_{k,j}, b_j + \sum_{k=1}^i v_{k,j} \right).
\end{equation}
Pick $\tau_{\boldsymbol v} \in \mathscr{M}$ satisfying (see Figure~\ref{Ex2}) 
\begin{equation}
\label{tauC}
\tau_{\boldsymbol v}(A_{ij}) = B_{ij}, \quad \forall\, i \in [I], j\in [J].
\end{equation}
Concretely, this means that we choose $\tau_{\boldsymbol v}$ such that if $x \in A_{ij}$, then
\begin{equation}
\tau_{\boldsymbol v}(x) = \left( x - a_i + \sum_{k=1}^{j-1} v_{ik} \right) + \left(b_j + \sum_{k=1}^{i-1} v_{kj} \right).
\end{equation}
The map $\tau_{\boldsymbol v}$ can be understood as follows. For a vertex $v \in [n]$ and a set $A \subseteq [0,1]$, write $v \rightsquigarrow A$ when $[\frac{v-1}{n},\frac{v}{n}) \subseteq A$. Refer to $v$ such that $v \rightsquigarrow A_i$ as a type-$i$ vertex. The interval $A_{ij}$ contains roughly $n v_{ij}$ type-$i$ vertices, which are the only type-$i$ vertices that get mapped onto the interval $B_j$. Thus, under the map $\tau_{\boldsymbol v}$, $B_j$ contains roughly $n v_{ij}$ type-$i$ vertices. Note also that, after $\tau_{\boldsymbol v}$ has been applied, the labels of type-$i$ vertices inside each block are sorted in increasing order. 

\begin{figure}
\centering
\begin{tikzpicture}[scale=0.65]
\tikzset{vertex/.style = {shape=circle,draw,minimum size=1.3em}}
\tikzset{edge/.style = {->,> = latex'}}

\draw (-4,8) rectangle (1,13);
\draw[fill=gray] (-4,10.5) rectangle (-1.5,13);
\draw[fill=black] (-0.25,8) rectangle (1,9.25);
\filldraw [black] (-4,8) circle (2pt)node[anchor=east] {$a_3$};
\filldraw [black] (-4,9.25) circle (2pt)node[anchor=east] {$a_2$};
\filldraw [black] (-4,10.5) circle (2pt)node[anchor=east] {$a_1$};
\filldraw [black] (-4,13) circle (2pt)node[anchor=east] {$a_0$};
\filldraw [black] (1,13) circle (2pt)node[anchor=south] {$a_3$};
\filldraw [black] (-0.25,13) circle (2pt)node[anchor=south] {$a_2$};
\filldraw [black] (-1.5,13) circle (2pt)node[anchor=south] {$a_1$};
\filldraw [black] (1,10.92) circle (2pt);
\filldraw [black] (1,10.5) circle (2pt);
\filldraw [black] (1,9.25) circle (2pt);
\filldraw [black] (1,8.83) circle (2pt);
\filldraw [black] (1,8.42) circle (2pt);
\filldraw [black] (1,8) circle (2pt);
\draw[->,dashed] (1,8.21) -- (4,12.79);
\draw[->,dashed] (1,8.63) -- (4,10.29);
\draw[->,dashed] (1,9.05) -- (4,11.95);
\draw[->,dashed] (1,9.87) -- (4,11.12);
\draw[->,dashed] (1,10.71) -- (4,12.37);
\draw[->,dashed] (1,11.96) -- (4,9.04);
\node at (2.5,13) {\large $\sigma$};
\node at (-1.5,14.25) {\large $u$};

\draw (4,8) rectangle (9,13);
\draw[fill=black] (4,12.58) rectangle (4.42,13);
\draw[fill=black] (4.83,12.58) rectangle (5.25,13);
\draw[fill=black] (4,11.75) rectangle (4.42,12.17);
\draw[fill=black] (4.83,11.75) rectangle (5.25,12.17);
\draw[fill=gray] (4.42,12.17) rectangle (4.83,12.58);
\draw[fill=black] (4,10.08) rectangle (4.42,10.5);
\draw[fill=black] (4.83,10.08) rectangle (5.25,10.5);
\draw[fill=black] (6.5,10.08) rectangle (6.92,10.5);
\draw[fill=black] (6.5,11.75) rectangle (6.92,12.17);
\draw[fill=black] (6.5,12.58) rectangle (6.92,13);
\draw[fill=gray] (6.92,8) rectangle (9,10.08);
\draw[fill=gray] (6.92,12.17) rectangle (9,12.58);
\draw[fill=gray] (4.42,8) rectangle (4.83,10.08);
\filldraw [black] (4,8) circle (2pt)node[anchor=east] {$b_3$};
\filldraw [black] (4,11.75) circle (2pt)node[anchor=east] {$b_1$};
\filldraw [black] (4,10.5) circle (2pt)node[anchor=east] {$b_2$};
\filldraw [black] (4,13) circle (2pt)node[anchor=east] {$b_0$};
\filldraw [black] (9,13) circle (2pt)node[anchor=south] {$b_3$};
\filldraw [black] (5.25,13) circle (2pt)node[anchor=south] {$b_1$};
\filldraw [black] (6.5,13) circle (2pt)node[anchor=south] {$b_2$};
\node at (6.5,14.25) {\large $u^\sigma$};

\draw (12,8) rectangle (17,13);
\draw[fill=black] (12,11.75) rectangle (13.25,13);
\draw[fill=gray] (13.25,11.75) rectangle (14.5,13);
\draw[fill=gray] (12,10.5) rectangle (13.25,11.75);
\draw[fill=gray] (14.5,8) rectangle (17,10.5);
\filldraw [black] (12,8) circle (2pt)node[anchor=east] {$b_3$};
\filldraw [black] (12,11.75) circle (2pt)node[anchor=east] {$b_1$};
\filldraw [black] (12,10.5) circle (2pt)node[anchor=east] {$b_2$};
\filldraw [black] (12,13) circle (2pt)node[anchor=east] {$b_0$};
\filldraw [black] (17,13) circle (2pt)node[anchor=south] {$b_3$};
\filldraw [black] (13.25,13) circle (2pt)node[anchor=south] {$b_1$};
\filldraw [black] (14.5,13) circle (2pt)node[anchor=south] {$b_2$};
\node at (14.5,14.25) {\large $h$};

\draw (-4,16) rectangle (1,21);
\draw[fill=gray] (-4,18.5) rectangle (-1.5,21);
\draw[fill=black] (-0.25,16) rectangle (1,17.25);
\filldraw [black] (-4,16) circle (2pt)node[anchor=east] {$a_3$};
\filldraw [black] (-4,17.25) circle (2pt)node[anchor=east] {$a_2$};
\filldraw [black] (-4,18.5) circle (2pt)node[anchor=east] {$a_1$};
\filldraw [black] (-4,21) circle (2pt)node[anchor=east] {$a_0$};
\filldraw [black] (1,21) circle (2pt)node[anchor=south] {$a_3$};
\filldraw [black] (-0.255,21) circle (2pt)node[anchor=south] {$a_2$};
\filldraw [black] (-1.5,21) circle (2pt)node[anchor=south] {$a_1$};
\node at (-1.5,22.25) {\large $u$};

\draw (4,16) rectangle (9,21);
\draw[fill=gray] (4,20.58) rectangle (4.42,21);
\draw[fill=black] (4.42,19.75) rectangle (5.25,20.58);
\draw[fill=black] (8.58,19.75) rectangle (9,20.58);
\draw[fill=black] (4.42,16) rectangle (5.25,16.42);
\draw[fill=black] (8.58,16) rectangle (9,16.42);
\draw[fill=gray] (6.5,16.42) rectangle (8.58,18.5);
\draw[fill=gray] (4,16.42) rectangle (4.42,18.5);
\draw[fill=gray] (6.5,20.58) rectangle (8.58,21);
\filldraw [black] (4,16) circle (2pt)node[anchor=east] {$b_3$};
\filldraw [black] (4,19.75) circle (2pt)node[anchor=east] {$b_1$};
\filldraw [black] (4,18.5) circle (2pt)node[anchor=east] {$b_2$};
\filldraw [black] (4,21) circle (2pt)node[anchor=east] {$b_0$};
\filldraw [black] (9,21) circle (2pt)node[anchor=south] {$b_3$};
\filldraw [black] (5.25,21) circle (2pt)node[anchor=south] {$b_1$};
\filldraw [black] (6.5,21) circle (2pt)node[anchor=south] {$b_2$};
\filldraw [black] (1,20.58) circle (2pt);
\filldraw [black] (1,18.5) circle (2pt);
\filldraw [black] (1,17.25) circle (2pt);
\filldraw [black] (1,16.42) circle (2pt);
\filldraw [black] (1,16) circle (2pt);
\draw[->,dashed] (1,20.79) -- (4,20.79);
\draw[->,dashed] (1,19.54) -- (4,17.46);
\draw[->,dashed] (1,16.21) -- (4,16.21);
\draw[->,dashed] (1,16.83) -- (4,20.17);
\draw[->,dashed] (1,17.88) -- (4,19.12);
\node at (2.5,21) {\large $\tau$};
\node at (6.5,22.25) {\large $u^\tau$};

\draw (12,16) rectangle (17,21);
\draw[fill=black] (12,19.75) rectangle (13.25,21);
\draw[fill=gray] (13.25,19.75) rectangle (14.5,21);
\draw[fill=gray] (12,18.5) rectangle (13.25,19.75);
\draw[fill=gray] (14.5,16) rectangle (17,18.5);
\filldraw [black] (12,16) circle (2pt)node[anchor=east] {$b_3$};
\filldraw [black] (12,19.75) circle (2pt)node[anchor=east] {$b_1$};
\filldraw [black] (12,18.5) circle (2pt)node[anchor=east] {$b_2$};
\filldraw [black] (12,21) circle (2pt)node[anchor=east] {$b_0$};
\filldraw [black] (17,21) circle (2pt)node[anchor=south] {$b_3$};
\filldraw [black] (13.25,21) circle (2pt)node[anchor=south] {$b_1$};
\filldraw [black] (14.5,21) circle (2pt)node[anchor=south] {$b_2$};
\node at (14.5,22.25) {\large $h$};
\end{tikzpicture}
\caption{Illustration of the map $\tau$ in \eqref{tauC}.}
\label{Ex2}
\end{figure}
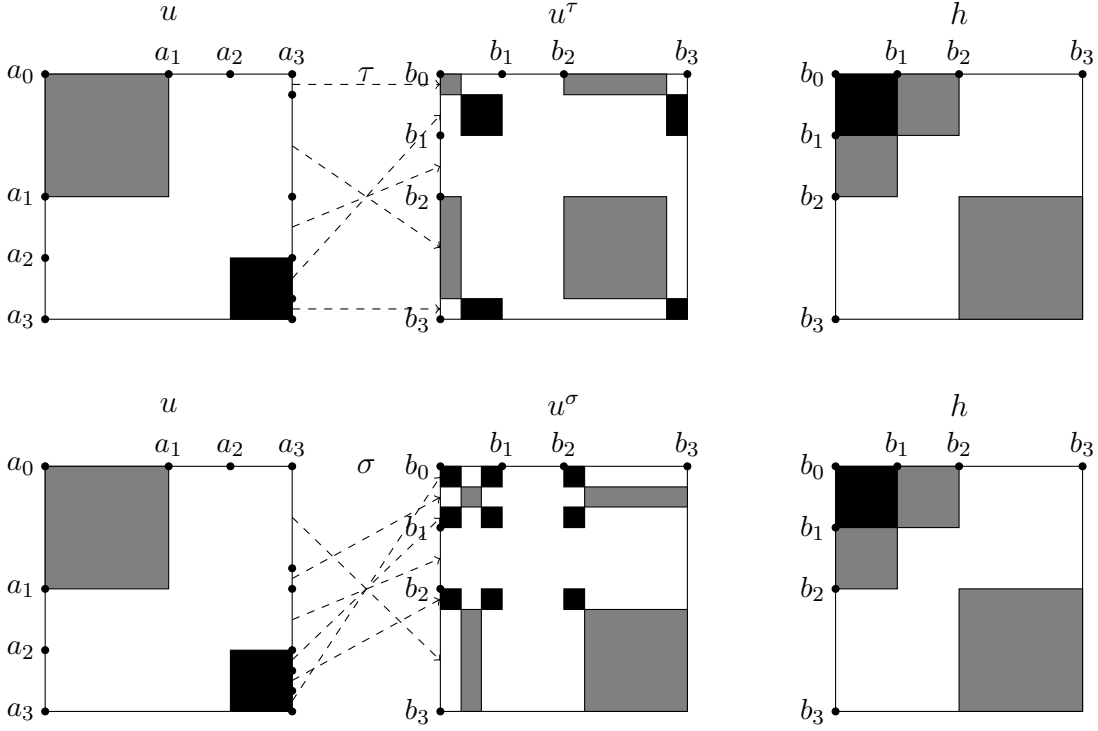

\medskip\noindent
{\bf 3.} 
We have now introduced all the objects that are needed to construct the set $\mathcal{T}$. We have the set $V$ and a mapping $\tau$ that relates elements of $V$ to permutations. As ${\mathcal T}$ must be {\it finite} while $V$ is uncountably infinite, we cannot let $\mathcal{T}$ be simply the image of $V$ under $\tau$. Instead, we construct a finite subset $\widebar V$ of $V$ such that any element of $V$ is close to an element of $\widebar V$ (exploiting the compactness of $V$), after which we let $\mathcal T$ be the image of $\widebar V$ under $\tau$. Concretely, we let $\widebar V \subseteq V$ be a finite set such that for any ${\boldsymbol u}  \in V$ there exists a ${\boldsymbol v} \in \widebar V$ with 
\begin{equation}
\label{TSG2}
\lVert {\boldsymbol u} -{\boldsymbol v}  \rVert_\infty<\frac{\eta}{2IJ}.
\end{equation}
After that we put $\mathcal{T} := \{ \tau_{\boldsymbol v}\colon\, \boldsymbol v \in \bar V \}$. It should be noted that if $\sigma \in \mathscr{M}$ and $C_{ij}(\sigma):= \{v \in [n] \colon\,v \rightsquigarrow A_i, \sigma_n(v) \rightsquigarrow B_j \}$, then for any $\sigma_n \in \mathscr{M}_n$ there exists $\tau \in \mathcal{T}$ such that   
\begin{equation}
\frac{1}{n}\sum_{i,j} |C_{ij}(\sigma_n) - C_{ij}(\tau) | < \eta,
\end{equation} 
provided $n$ is sufficiently large. In other words, for any $\sigma_n \in \mathscr{M}_n$ there exists a permutation $\tau \in \mathcal{T}$ that maps approximately the same proportion of type-$i$ vertices to the interval $B_j$ (see Figure \ref{Ex2} for an illustration). `Note that we require $n$ to be large to account for boundary effects, i.e., for $v \in [n]$ such that $[\frac{v-1}{n}, \frac{v}{n})$ is not contained in a single $A_{i}$ or such that $\tau([\frac{v-1}{n}, \frac{v}{n}))$ is not contained in a single $B_j$.

\medskip\noindent
{\bf 4.}
Let $\sigma_n^0 \in \mathscr{M}_n$ be a permutation that permutes the blocks $B_j$ only, and sorts the different vertices within $B_j$ in ascending order of their original label. Formally this means that $\sigma^0_n$ satisfies the following properties:
\begin{itemize}
\item[$\circ$]
If $\sigma_n(u) \rightsquigarrow B_j$, then $\sigma^0 \circ \sigma_n(u) \rightsquigarrow B_j$.
\item[$\circ$]
If $\sigma_n(u), \sigma_n(v) \rightsquigarrow B_j$ and $u \rightsquigarrow A_{i_1}$, $v \rightsquigarrow A_{i_2}$ with $i_1<i_2$, then $(\sigma_n^0 \circ \sigma_n)(u) < (\sigma_n^0 \circ \sigma_n)(v)$.
\item[$\circ$]
If $\sigma_n(u), \sigma_n(v) \rightsquigarrow B_j$ and $u,v \rightsquigarrow A_i$ with $u < v$, then $(\sigma^0_n \circ \sigma_n)(u) < (\sigma_n^0 \circ \sigma_n)(v)$.
\end{itemize}
Observe that, because $g = g^{\sigma_n^0}$, 
\begin{equation}
\mathbb{P}^{u_n^\eta}( f_{n,t}^{\sigma_n} \in \mathbb{B}_\square(g, \varepsilon) ) 
= \mathbb{P}^{u_n^\eta} \Big(f_{n,t}^{\sigma_n\, \circ\, \sigma^0_n} \in \mathbb{B}_\square(g, \varepsilon) \Big) 
= \mathbb{P}^{(u_n^\eta)^{\sigma_n\, \circ\, \sigma_n^0}}( f_{n,t} \in \mathbb{B}_\square(g,\varepsilon)),
\end{equation}
where the last equality follows from the fact that applying $\sigma_n \circ \sigma_0$ at time $0$ is equivalent to applying it at time $t$. Now, by \eqref{TSG2}, for any $\sigma_n \in \mathscr{M}_n$ there exists a $\tau \in \mathcal{T}$ such that 
\begin{equation}
d_\square\big( (u_n^\eta)^{\sigma_n\, \circ\, \sigma_n^0}, u^\tau\big) 
\leq d_\square\big( (u_n^\eta)^{\sigma_n\, \circ \,\sigma_n^0}, u^{\sigma_n\, \circ\, \sigma_n^0}\big) 
+ d_\square\big(u^{\sigma_n \,\circ\, \sigma_n^0}, u^\tau\big) \leq 2 \eta.
\end{equation}
Consequently, we have derived the upper bound
\begin{equation}
\begin{aligned}
\lim_{\eta \downarrow 0} \limsup_{n \to \infty} \frac{1}{{n\choose 2}}& \max_{\sigma_n \in \mathscr{M}_n} \log 
\mathbb{P}^{u^\eta_n} \left( \{ f_{n,t}^{\sigma_n} \in \mathbb{B}_\square(g,\varepsilon) \} \right)\\
&\leq \lim_{\eta \downarrow 0} \limsup_{n \to \infty} \frac{1}{{n\choose 2}} \max_{\tau \in \mathcal{T}} 
\sup_{\widebar u^\eta_n \in \mathbb{B}_\square(u^\tau, 2\eta) } 
\mathbb{P}^{\widebar u^\eta_n}(f_{n,t} \in \mathbb{B}_\square(g,\varepsilon))
\label{TSG3}
\end{aligned}
\end{equation}
for every $\varepsilon>0$ and $g \in \mathscr{W}$.

\medskip\noindent
{\bf 5.}
To further bound the right-hand side of \eqref{TSG3}, let $\{u_{ij} \}_{ i,j \in[ I]}$ and $\{ g_{\ell m}\}_{  \ell,m\in[ J]}$ be the block values of $u$ and $g$, respectively (for example, if $x \in A_i$ and $y \in A_j$, then $u(x,y)=u_{ij}$). We assume without loss of generality that $v_{ij} >0$ for all $i,j$ (recall \eqref{Vdef}; the $i,j$ with $v_{ij}=0$ can be ignored). Abbreviate
\begin{equation}
\eta_{i\ell jm} := \frac{2\eta}{v_{i\ell}v_{jm}}, \qquad \varepsilon_{i\ell jm} := \frac{\varepsilon}{v_{i\ell}v_{jm}}.
\end{equation}
Observe that, for each $ i,j \in[I]$ and $ k,\ell \in[J]$, if $d_\square(\widebar u^\eta_n, u^\tau) \leq 2\eta$, then 
\begin{equation}
\int_{A_{ik} \times A_{j \ell}} \ddd x\, \ddd y\, \widebar u_n(x,y) \in \left[u_{ij} 
- \eta_{i\ell jm}, u_{ij} +\eta_{i\ell jm}\right],
\end{equation} 
while if $f_{n,t} \in \mathbb{B}_\square(g, \varepsilon)$, then 
\begin{equation}
\int_{A_{ik} \times A_{j \ell}} \ddd x\, \ddd y\, f_{n,t} (x,y)  \in \left[ g_{k\ell} 
- \varepsilon_{i\ell jm} , g_{k\ell} + \varepsilon_{i\ell jm} \right].
\end{equation}
Because the rectangle $A_{i\ell} \times A_{jm}$ represents $n^2 v_{i\ell} v_{jm}$ independently evolving edges, we have 
\begin{equation}
\begin{aligned}
\limsup_{n \to \infty} \frac{1}{n^2 v_{i\ell} v_{jm}} \log 
&\,\mathbb{P} \Bigg( \left. \int_{A_{i\ell}\times A_{jm}} \ddd x\, \ddd y\,f_{n,t}(x,y) 
\in  \left[ g_{\ell m} - \varepsilon_{i\ell jm} , g_{\ell m} + \varepsilon_{i\ell jm} \right] ~\right| \\ 
&\qquad\qquad \int_{A_{i\ell} \times A_{jm}} \ddd x\, \ddd y\,f_{n,0}(x,y) \in  \left[u_{ij} 
- \eta_{i\ell jm}, u_{ij} + \eta_{i\ell jm}\right] \Bigg)\\
&\leq - \inf_{\widebar u \in \left[u_{ij} - \eta_{i\ell jm}, u_{ij} + \eta_{i\ell jm}\right], \, 
x \in \left[ g_{\ell m} - \varepsilon_{i\ell jm} , g_{\ell m} 
+ \varepsilon_{i\ell jm} \right]} I_{1,t}(\widebar u,x),
\end{aligned}
\end{equation}
while this upper bound must be multiplied by $\tfrac{1}{2}$ when $(i,\ell) = (j,m)$ (to avoid double counting). This leads to 
\begin{equation}
\begin{aligned}
\limsup_{n \to \infty} \frac{1}{{n \choose 2}}
& \log \mathbb{P}^{\widebar u_n^\eta}(f_{n,t} 
\in \mathbb{B}_\square(g,\varepsilon)) \\
&\leq - \sum_{i\ell jm} v_{i\ell}v_{jm} \inf_{\widebar u \in \left[u_{ij} - \eta_{i\ell jm}, u_{ij} 
+ \eta_{i\ell jm}\right], \, x \in \left[ g_{k\ell} - \varepsilon_{ikj\ell} , g_{k\ell} 
+ \varepsilon_{ikj\ell} \right]} I_{1,t}(\widebar u,x) \\
&\leq -I_{1,t}(u^\tau, g) + \sum_{i\ell jm} v_{i\ell}v_{jm}\, \Delta_I\left(\eta_{i\ell jm} 
\wedge 1, \varepsilon_{i\ell jm} \wedge 1\right),
\end{aligned}
\end{equation}
where $\Delta_I(\eta,\varepsilon)$ is defined in \eqref{LSC0}. Regarding the last inequality, note that because we are dealing with block graphons the integral in the definition of $I_{1,t}$ can be expressed as a sum with weights given by $v_{ij}$. Set $\gamma>0$. If $v_{i\ell}v_{jm}>\gamma$, then 
\begin{equation}
v_{i\ell}v_{jm}\, \Delta_I\left(\eta_{i\ell jm} \wedge 1, \varepsilon_{i\ell jm} \wedge 1\right) 
\leq \Delta_I \left(\tfrac{2\eta}{\gamma}, \tfrac{\varepsilon}{\gamma}\right),
\end{equation}
whereas if $v_{i\ell}v_{jm}\leq\gamma$ then, by Lemma \ref{UnifCont},
\begin{equation}
v_{i\ell}v_{jm} \Delta_I\left(\eta_{i\ell jm},\tfrac{\varepsilon}{\gamma}\right) \leq C \gamma
\end{equation}
with $C:=\max\{-\log p_{01,t}, - \log(1-p_{11,t}\}$. Consequently, for any $\boldsymbol v \in V$ and $\gamma>0$, we have 
\begin{equation}
\sum_{i\ell jm} v_{i\ell}v_{jm}\, \Delta_I\left(\eta_{i\ell jm} \wedge 1, 
\varepsilon_{i\ell jm} \wedge 1\right) \leq I^2 J^2\left(C \gamma 
+ \Delta_I \left(\tfrac{2 \eta}{\gamma}, \tfrac{\varepsilon}{\gamma}\right)\right)
=:E(\gamma,\eta,\varepsilon).
\end{equation}
Combining the above formulas, we arrive at
\begin{equation}
\label{Fclaim2}
\begin{aligned}
 \lim_{\eta \downarrow 0} \limsup_{n \to \infty} \frac{1}{{n\choose 2}} \max_{\tau \in \mathcal{T}} &
 \sup_{\widebar u^\eta_n \in \mathbb{B}_\square(u^\tau, 2\eta) } 
 \mathbb{P}^{\widebar u^\eta_n}(f_{n,t} \in \mathbb{B}_\square(g,\varepsilon)) \\
& \leq -\lim_{\eta \downarrow 0} \min_{\tau \in \mathcal{T}} \big[I_{1,t}(u^\tau,g) + E(\gamma,\eta,\varepsilon)\big] \\
 &\leq -\inf_{\phi \in \mathscr{M}} I_{1,t}(u^\phi,g) +E(\gamma,0,\varepsilon) 
 = - \tilde I_{1,t}(\tilde u, \tilde g) + E(\gamma,0,\varepsilon).
\end{aligned}
\end{equation}
Picking $\gamma=\varepsilon^{1/2}$, we can apply Lemma \ref{UnifCont}, to obtain 
\begin{equation}
\label{Fclaim3}
E(\varepsilon) := E(\varepsilon^{1/2},0,\varepsilon) \downarrow 0, \qquad  \varepsilon \downarrow  0.
\end{equation}

\medskip\noindent
{\bf 6.}
We can now finally prove \eqref{TSG}. Recall that we only need to consider $g\in \mathscr{W}(\varepsilon)$ such that $\tilde \delta_\square(\tilde g, \tilde h) \leq 2\varepsilon$. In view of \eqref{TSG3}, \eqref{Fclaim2} and \eqref{Fclaim3}, it is enough to show that 
\begin{equation}
\label{Fclaim}
- \tilde I_{1,t}(\tilde u, \tilde g) \leq  -\tilde I_{1,t}(\tilde u, \tilde h) + E(2\varepsilon)
\end{equation}
for all $\tilde g \in \mathscr{W}$ such that $\tilde \delta_\square(\tilde g, \tilde h) \leq 2\varepsilon$. Without loss of generality we may assume that $d_\square(g,h) \leq 2\varepsilon$. Following a similar line of reasoning as above, we get 
\begin{equation}
-I_{1,t}(u^\sigma,g) \leq -I_{1,t}(u^\sigma,h) + E(2\varepsilon) \qquad \forall\, \sigma \in \mathscr{M},
\end{equation}
which implies \eqref{Fclaim}.
\end{proof}


\subsection{Lower bound}
\label{subsec:LB}

To establish the lower bound, it suffices to prove that 
\begin{equation}
\label{UG0}
\lim_{\varepsilon \downarrow 0} \lim_{\eta \downarrow 0} \liminf_{n \to \infty} \frac{1}{{n \choose 2}} \log 
\mathbb{P}\big(\tilde f_{n,t} \in \tilde{\mathbb{B}}_\square(\tilde h, \varepsilon) \mid \tilde f_{n,0} = \tilde u_n^\eta\big) 
\geq - \tilde I_{1,t}(\tilde u, \tilde h) \qquad \forall\, \tilde h \in \tilde{\mathscr{W}}. 
\end{equation}
For any $\beta>0$ there exists a $\phi(\beta) \in \mathscr{M}$ such that 
\begin{equation}
\tilde I_{1,t}(\tilde u, \tilde h) \geq I_{1,t}(u, h^\phi) - \beta.
\end{equation}
Because $\mathbb{B}_\square(h^\phi, \varepsilon) \subseteq \mathbb{B}_\square(\tilde h, \varepsilon)$ for any $\phi \in \mathscr{M}$, picking $h = h^{\phi(\beta)}$ and letting $\beta \downarrow 0$, we see that \eqref{UG0} follows once we show that
\begin{equation}
\label{UG}
\lim_{\varepsilon \downarrow 0} \lim_{\eta \downarrow 0} \liminf_{n \to \infty} \frac{1}{{n \choose 2}} \log 
\mathbb{P}\big(f_{n,t} \in \mathbb{B}_\square(h, \varepsilon) \mid f_{n,0} = u_n^\eta\big) \geq - I_{1,t}(u,h)
\qquad \forall\,u, h \in \mathscr{W}.
\end{equation}
The proof comes in 5 steps and is constructed around a series of technical lemmas (Lemmas~\ref{A2}--\ref{LLN} below).

\medskip\noindent
{\bf 1.}
To prove \eqref{UG}, we first introduce some notation. As before, we work with block graphons. For $k \in \mathbb{N}$ and $i,j \in [k]$, let  $B_{i,j}^{(k)} := [\tfrac{i-1}{k}, \tfrac{i}{k}) \times [\tfrac{j-1}{k}, \tfrac{j}{k})$. For $g \in \mathscr{W}$, we let $\hat g_k\in \mathscr{W}^{(k)}$ be defined at the bottom-left corner points of $B_{i,j}^{(k)}$ by
\begin{equation}
\hat g_k\Big(\tfrac{i-1}{k},\tfrac{j-1}{k}\Big) := {k^2} \int_{B_{i,j}^{(k)}} \ddd x\, \ddd y\,g(x,y),
\end{equation}
and for $(x,y) \in B_{i,j}^{(k)}$ as
\begin{equation}
\hat g_k(x,y) := \hat g_k\Big(\tfrac{i-1}{k},\tfrac{j-1}{k}\Big).
\end{equation}
We settle \eqref{UG} by using a Cram\'er-transform-type argument, i.e., we rely on a particular change of measure. Concretely, for $z,x \in [0,1]$, let 
\begin{equation} 
\label{as}
\tau_t(z,x)
:= \argmax_{v\in\mathbb{R}} \big[vx - J_{t,v}(z)\big],
\end{equation}
where $J_{t,v}(z)$ is the function defined in \eqref{ee}. The idea is to use $\tau_t$ to describe the probability that particular edges are active at time $t$ when $f_{n,t}$ is conditioned to be close to $h$. To that end, abbreviate $\theta_{k,t}(x,y):=\exp({\tau_t(\hat u_k(x,y),\hat h_k(x,y))})$, let
\begin{equation}
\begin{aligned}
\alpha_{k,t}(x,y) &:= \frac{\pbb[t]\,\theta_{k,t}(x,y)}
{1-\pbb[t] + \pbb[t]\,\theta_{k,t}(x,y)},\\ 
\beta_{k,t}(x,y) &:= \frac{\pab[t]\,\theta_{k,t}(x,y)}
{1 - \pab[t] + \pab[t]\,\theta_{k,t}(x,y)},
\end{aligned}
\end{equation}
and for $\eta>0$ put
\begin{equation}
\label{qkn}
q^\eta_{k,n}(x,y) 
:= u^\eta_n(x,y) \alpha_{k,t}(x,y) + [1- u^\eta_n(x,y)] \beta_{k,t}(x,y).
\end{equation}
For $i,j \in [k]$, let 
\begin{equation}
\label{LR1}
q^\eta_k(i,j,n) := \frac{1}{{\rm Leb}(B_{i,j}^{(n)})} \int_{B_{i,j}^{(n)}} \ddd x\, \ddd y\, q^\eta_{n,k}(x,y).
\end{equation}
We can informally interpret $q^\eta_k(i,j,n)$ as an estimate of the probability that edge $(i,j)$ is active at time $t$ given that $f_{n,t}$ is close to $h$. Note that $q^\eta_{k}(i,j,n)$ depends not only on whether edge $(i,j)$ is initially active (dependence on $n$), but also on the proportion of other `nearby' edges that are initially active (dependence on $k$). 

\medskip\noindent
{\bf 2.}
In the following lemma we show that $q^\eta_{k,n} \in \mathscr{W}$ converges to $h$ in an appropropriate limit.

\begin{lemma}
\label{A2}
For every $t>0$,
\begin{equation}
\label{dl}
\lim_{k \to \infty} \lim_{\eta \downarrow 0} \lim_{n \to \infty} d_{\square}(h,q^\eta_{k,n}) = 0.
\end{equation}
\end{lemma}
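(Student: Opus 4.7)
My strategy is an add-and-subtract argument. I would introduce the auxiliary graphon
\[
q^*_k(x,y) := u(x,y)\,\alpha_{k,t}(x,y) + [1-u(x,y)]\,\beta_{k,t}(x,y),
\]
and decompose via the triangle inequality
\[
d_\square(h, q^\eta_{k,n}) \leq d_\square(h, \hat h_k) + d_\square(\hat h_k, q^*_k) + d_\square(q^*_k, q^\eta_{k,n}).
\]
Each term will be shown to vanish at the appropriate stage of the iterated limit.

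The first term is handled by \cite[Proposition 2.6]{C}: $\hat h_k \to h$ in $L^2$ (hence in $L^1$) as $k \to \infty$, combined with $d_\square \leq \|\cdot\|_{L^1}$. For the second term, the key observation is that the first-order condition defining $\tau_t$ in \eqref{as} can be rewritten as the identity
\[
\hat h_k(x,y) = \hat u_k(x,y)\,\alpha_{k,t}(x,y) + [1-\hat u_k(x,y)]\,\beta_{k,t}(x,y),
\]
so that $q^*_k - \hat h_k = (u-\hat u_k)(\alpha_{k,t}-\beta_{k,t})$. Since $\alpha_{k,t},\beta_{k,t}\in[0,1]$, the pointwise bound $|q^*_k - \hat h_k|\leq|u-\hat u_k|$ gives $\|q^*_k-\hat h_k\|_{L^1}\leq\|u-\hat u_k\|_{L^1}\to 0$ as $k\to\infty$, again by \cite[Proposition 2.6]{C}.

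The third term is the delicate one. Writing $q^\eta_{k,n}-q^*_k = (u^\eta_n-u)(\alpha_{k,t}-\beta_{k,t})$, the crucial point is that $\alpha_{k,t}-\beta_{k,t}$ is \emph{constant on each of the $k^2$ blocks} $B^{(k)}_{i,j}$, since it depends only on the block values of $\hat u_k$ and $\hat h_k$. Splitting any cut rectangle $S\times T$ as $\bigsqcup_{i,j}(S\cap I_i)\times(T\cap I_j)$ with $I_i=[\tfrac{i-1}{k},\tfrac{i}{k})$, using $|\alpha_{k,t}-\beta_{k,t}|\leq 1$ block-wise, and bounding each block contribution by $d_\square(u^\eta_n,u)\leq\eta$, I obtain the crude estimate
\[
d_\square(q^\eta_{k,n}, q^*_k) \leq k^2\,d_\square(u^\eta_n,u) \leq k^2\eta.
\]
For \emph{fixed} $k$ this bound is $n$-independent, so taking first $n\to\infty$ and then $\eta\downarrow 0$ makes it vanish; only afterwards is the outer $k\to\infty$ limit applied to finish off the first two terms.

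The main obstacle is exactly this $k^2$ amplification of the cut norm under multiplication by a block-constant function with $k^2$ blocks. One cannot hope to interchange the $\eta\downarrow 0$ and $k\to\infty$ limits, and this dictates the specific nested ordering in the statement of the lemma: the cut-norm bound $d_\square(u^\eta_n,u)\leq\eta$ must be consumed before $k$ is allowed to grow.
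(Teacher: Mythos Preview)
Your proof is correct and follows essentially the same approach as the paper's own proof: both hinge on the first-order condition $\hat h_k = \hat u_k\,\alpha_{k,t} + (1-\hat u_k)\,\beta_{k,t}$, the block-constant structure of $\alpha_{k,t},\beta_{k,t}$ leading to a $k^2$ amplification of $d_\square(u^\eta_n,u)$, and the $L^2$ convergence $\hat u_k\to u$, $\hat h_k\to h$ from \cite[Proposition 2.6]{C}. The only cosmetic difference is that the paper uses a two-term triangle inequality $d_\square(q^\eta_{k,n},h)\leq d_\square(q^\eta_{k,n},\hat h_k)+d_\square(\hat h_k,h)$ and bounds the first term directly by $2k^2 d_\square(u,u^\eta_n)+\|u-\hat u_k\|_{L^1}$, whereas you insert the extra pivot $q^*_k$ to separate the $\eta$-dependent and $k$-dependent errors into distinct terms; your version is marginally cleaner (yielding $k^2$ rather than $2k^2$) but otherwise identical in substance.
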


\begin{proof} 
First note that
\begin{equation}
\label{dl1}
d_\square( q^\eta_{k,n}, h )  \leq d_\square( q^\eta_{k,n}, \hat h_k ) + d_{\square}( h, \hat h_k).
\end{equation}
We next analyse $d_\square( q^\eta_{k,n}, \hat h_k )$. From \eqref{as}, we find the first-order condition
\begin{equation}
\label{HF}
\hat{h}_k(x,y) = \hat u_k(x,y)\, \alpha_{k,t}(x,y) + [1- \hat u_k(x,y)]\, \beta_{k,t}(x,y) \qquad \forall\,x,y \in [0,1].
\end{equation}
Using \eqref{HF}, we obtain (rewrite the supremum over subsets $S,T \subseteq [0,1]$ in \eqref{cutdist} as a supremum over functions $a,b\colon [0,1] \to [0,1]$) 
\begin{equation}
\label{dl2}
\begin{aligned}
&d_\square(q^\eta_{k,n}, \hat h_k)\\ 
&= \sup_{a,b} \int_{[0,1]^2} \ddd x\, \ddd y\,a(x)b(y)
\bigg[\alpha_{k,t}(x,y) [u^\eta_n(x,y) - \hat{u}_k(x,y)]
+ \beta_{k,t}(x,y) [\hat u_k(x,y) - u^\eta_n(x,y)] \bigg]\\
&\leq \sup_{a,b} \int_{[0,1]^2} \ddd x\, \ddd y\,a(x)b(y) 
\bigg[\alpha_{k,t}(x,y) [u^\eta_n(x,y) - {u}(x,y)]
+ \beta_{k,t}(x,y) [u(x,y) - u^\eta_n(x,y)] \bigg]\\
&\qquad\qquad\qquad + \int_{[0,1]^2} \ddd x\, \ddd y\,| u(x,y) - \hat u_k(x,y) |\\
&\leq 2 k^2 d_\square (u , u^\eta_n) + \int_{[0,1]^2} \ddd x\,\ddd y\,| u(x,y) - \hat u_k(x,y) |, 
\end{aligned}
\end{equation}
where we note that $\alpha_{k,t}(x,y) ,\beta_{k,t}(x,y)\in[0,1]$, and use the triangle inequality in combination with the fact that the pair $(\hat u_k, \hat h_k)$ can take at most $k^2$ different values (corresponding to their values on the interior of $B_{i,j}^{(k)}$). The claim now follows from \eqref{dl1} and \eqref{dl2}, because $d_\square(u,u^\eta_n) \downarrow 0$, $\hat h_k \to h$ in $L^2$ and $\hat u_k \to u$ in $L^2$ (see \cite[Proposition 2.6]{C}).
\end{proof}

\medskip\noindent
{\bf 3.}
The next step is to construct a random variable $H_n$ on simple graphs with $n$ vertices by declaring that, for every $i,j\in[n]$ with $i<j$, vertices $i$ and $j$ are connected by an edge with probability $q^\eta_{n,k}(i,j)$ defined in \eqref{LR1}. Let $\mathbb{P}^\eta_{n,k,h}$ denote the law of $H_n$. Note that if $f \in \mathscr{W}_n$, then 
\begin{equation}
\label{LR2}
\begin{aligned}
\mathbb{P}^\eta_{n,k,h}( \{ f\}) &= \prod_{1 \leq i < j \leq n} q^\eta_{k}(i,j,n)^{f_{ij}}[1-q^\eta_{k}(i,j,n)]^{1-f_{ij}},\\
\mathbb{P}^\eta_{n,t}( \{ f \}) 
&= \prod_{1 \leq i < j \leq n} \pbb[t]^{u^\eta_n(i,j) f_{ij}} \pab[t]^{[1-u^\eta_n(i,j)]f_{ij}}\\ 
&\qquad\qquad \times (1- \pbb[t])^{u^\eta_n(i,j)(1-f_{ij} )} (1-\pab[t])^{[1-u^\eta_n(i,j)](1 -f_{ij})},
\end{aligned}
\end{equation}
where $f_{ij}$ denotes the value of $f(x,y)$ on the interior of $B_{i,j}^{(n)}$.

\begin{lemma}
\label{A3}
For fixed $k\in\mathbb{N}$,
\begin{equation}
\label{Pclaim}
\lim_{\eta \downarrow 0} \lim_{n \to \infty} \frac{1}{{n \choose 2}} \int_{\mathscr{W}_n}
\left(\log \frac{\ddd \mathbb{P}^\eta_{n,k,h}}{\ddd \mathbb{P}^\eta_{n,t}}\right)\, 
\ddd \mathbb{P}^\eta_{n,k,h} = I_{1,t}(\hat u_k, \hat h_k).
\end{equation}
\end{lemma}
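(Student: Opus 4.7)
The plan is to exploit the product structure of the two measures displayed in \eqref{LR2}. Since both $\mathbb{P}^\eta_{n,k,h}$ and $\mathbb{P}^\eta_{n,t}$ are products of independent Bernoullis indexed by edges, the log-likelihood ratio decomposes as a sum over edges, and its expectation under $\mathbb{P}^\eta_{n,k,h}$ is a sum of edge-wise Bernoulli relative entropies. Using the formulas in \eqref{LR2}, the integral in \eqref{Pclaim} equals
\begin{equation*}
\frac{1}{\binom{n}{2}}\sum_{1\le i<j\le n} \mathcal{R}\bigl(q^\eta_k(i,j,n) \bigm| p^{(ij)}_{*,t}\bigr),
\end{equation*}
where $\mathcal{R}$ is the relative entropy in \eqref{Rdef} and $p^{(ij)}_{*,t}$ equals $\pbb[t]$ when $u^\eta_n(i,j)=1$ and $\pab[t]$ when $u^\eta_n(i,j)=0$.

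For $n$ large compared to $k$, the block graphons $\alpha_{k,t}$ and $\beta_{k,t}$ are constant on every pixel $B^{(n)}_{i,j}$ lying in the interior of a single $k$-block, so for such edges
\begin{equation*}
q^\eta_k(i,j,n) = u^\eta_n(i,j)\,\alpha^{(a,b)}_{k,t} + (1-u^\eta_n(i,j))\,\beta^{(a,b)}_{k,t},
\end{equation*}
with $(a,b)$ the $k$-block containing $(i/n,j/n)$ and $\alpha^{(a,b)}_{k,t},\beta^{(a,b)}_{k,t}$ the corresponding constant values. The $O(kn)$ edges whose pixel straddles a $k$-block boundary contribute at most $O(k/n)=o(1)$ to the normalized sum, hence are negligible.

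The algebraic core is a Legendre-duality identity. Write $\theta_{k,t}=\eee^{\tau_t(\hat u_k,\hat h_k)}$ and abbreviate $\tau = \tau_t(\hat u_k,\hat h_k)$. A direct calculation from the definitions of $\alpha_{k,t},\beta_{k,t}$ yields
\begin{equation*}
\mathcal{R}(\alpha_{k,t}\mid \pbb[t]) = \tau\,\alpha_{k,t} - \log\bigl(1-\pbb[t]+\theta_{k,t}\pbb[t]\bigr),\qquad \mathcal{R}(\beta_{k,t}\mid \pab[t]) = \tau\,\beta_{k,t} - \log\bigl(1-\pab[t]+\theta_{k,t}\pab[t]\bigr).
\end{equation*}
Combining these with the first-order condition \eqref{HF}, namely $\hat h_k = \hat u_k\,\alpha_{k,t} + (1-\hat u_k)\,\beta_{k,t}$, gives the pointwise identity
\begin{equation*}
\hat u_k\,\mathcal{R}(\alpha_{k,t}\mid \pbb[t]) + (1-\hat u_k)\,\mathcal{R}(\beta_{k,t}\mid \pab[t]) = \tau\,\hat h_k - J_{t,\tau}(\hat u_k) = I_{1,t}(\hat u_k,\hat h_k),
\end{equation*}
where the last equality uses that $\tau$ is the argmax in \eqref{as} and \eqref{ef}.

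Finally, grouping edges by $k$-block and by their initial state, the sum above equals
\begin{equation*}
\frac{1}{\binom{n}{2}}\sum_{a,b=1}^k\Bigl[ N^{1}_{ab}(n)\,\mathcal{R}(\alpha^{(a,b)}_{k,t}\mid\pbb[t]) + N^{0}_{ab}(n)\,\mathcal{R}(\beta^{(a,b)}_{k,t}\mid\pab[t])\Bigr] + o(1),
\end{equation*}
where $N^{1}_{ab}(n)$ (respectively $N^{0}_{ab}(n)$) is the number of initially active (inactive) edges inside $k$-block $(a,b)$. The cut-distance bound $d_\square(u^\eta_n,u)\le \eta$ (together with Lemma~\ref{LLN}) implies that $2 k^2 N^{1}_{ab}(n)/n^2 \to \hat u_k(a,b)$ and $2 k^2 N^{0}_{ab}(n)/n^2 \to 1-\hat u_k(a,b)$ as $n\to\infty$ and then $\eta\downarrow 0$. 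Inserting the pointwise identity and passing to the limit yields $\int_{[0,1]^2} I_{1,t}(\hat u_k(x,y),\hat h_k(x,y))\,\ddd x\,\ddd y = I_{1,t}(\hat u_k,\hat h_k)$, as desired. The main obstacles are keeping boundary pixels under control and extracting convergence of block averages from cut-distance convergence; both are handled by the approximation step and Lemma~\ref{LLN}.
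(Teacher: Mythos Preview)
Your argument is correct and follows essentially the same route as the paper: both proofs exploit the product structure of the two measures, reduce to edge-wise contributions, invoke the first-order condition \eqref{HF} to collapse the tilted parameters to $\hat h_k$, and pass to the limit using $d_\square(u^\eta_n,u)\to 0$ together with the block-constancy of $\alpha_{k,t},\beta_{k,t}$. Your framing via Bernoulli relative entropies and the Legendre identity is a slightly cleaner packaging of the same computation the paper carries out explicitly with the functions $F_{k,x,y}(\ell,m)$.

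One minor correction: the appeal to Lemma~\ref{LLN} in your last paragraph is misplaced. Lemma~\ref{LLN} concerns concentration of the tilted law $\mathbb{P}^\eta_{n,k,h}$ around $q^\eta_{n,k}$, and plays no role in the convergence $2k^2 N^{1}_{ab}(n)/n^2\to \hat u_k(a,b)$. That convergence follows directly from the cut-distance bound alone, by testing $u^\eta_n-u$ against the rectangle $S\times T=[(a-1)/k,a/k)\times[(b-1)/k,b/k)$ in \eqref{cutdist}. Drop the reference to Lemma~\ref{LLN} and the argument is complete.
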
 

\begin{proof} 
Let $n_\ell=\ell k$ with $\ell\in\mathbb{N}$, and note that, for all $i,j \in [n_\ell]$,
\begin{equation}
B_{i,j}^{(n_\ell)} \subseteq B_{\lceil i/k \rceil, \lceil j/k \rceil}^{(k)}.
\end{equation} 
Rewriting \eqref{qkn} as a product, and applying \eqref{LR1} and \eqref{LR2}, we obtain that, for any $f \in \mathscr{W}_n$, \begin{equation}
\label{eq:ratio}
\begin{aligned}
\frac{\mathbb{P}^\eta_{n,k,h}(f)}{\mathbb{P}^\eta_{n,t}(f)}
&= \prod_{1 \leq i < j \leq n} \bar F_{k,i,j}(1,1)^{u_n^\eta(i,j)f_{ij} } \bar F_{k,i,j}(1,0)^{[1-u_n^\eta(i,j)]f_{ij}}\\ 
&\qquad\qquad \times \bar F_{k,i,j}(0,1)^{u_n^\eta(i,j)(1-f_{ij})} \bar F_{k,i,j}(0,0)^{[1-u_n^\eta(i,j)](1-f_{ij})},
\end{aligned}
\end{equation}
where, for $\ell, m \in \{0,1\}$,
\begin{equation}
F_{k,x,y}(\ell, m) = \frac{(\theta_{k,t}(x,y))^\ell}
{1-p_{m1,t} + p_{m1,t} \,\theta_{k,t}(x,y)},\hspace{0.21cm}
\bar F_{k,i,j}(\ell, m) = \frac{(\bar\theta_{k,t}(i,j))^\ell}
{1-p_{m1,t} + p_{m1,t} \,\bar\theta_{k,t}(i,j)},
\end{equation}
with
\begin{equation}
\bar\theta_{k,t}(i,j):=\theta_{k,t}\left(\tfrac{i-1}{k},\tfrac{j-1}{k}\right).
\end{equation}
From \eqref{eq:ratio}, elementary computations yield
\begin{equation}
\label{Feq1}
\begin{aligned}
&\lim_{\eta \downarrow 0} \lim_{n \to \infty} \frac{1}{{n \choose 2}} \int_{\mathscr{W}_n} 
\left(\log \frac{\ddd \mathbb{P}^\eta_{n,k,h} }{\ddd\mathbb{P}^\eta_{n,t} }(f)\right)\,
\ddd \mathbb{P}^\eta_{n,k,h}(f) \\
&=\lim_{\eta \downarrow 0} \lim_{n \to \infty} \frac{1}{{n \choose 2}} 
\sum_{1 \leq i < j \leq n} \int_{\mathscr{W}} \bigg[ u^\eta_n(i,j) f_{ij} \log \bar F_{k,i,j}(1,1) 
+[1-u^\eta_n(i,j)] f_{ij} \log \bar F_{k,i,j}(1,0) \\
&\qquad + u^\eta_n(i,j)(1- f_{ij}) \log \bar F_{k,i,j}(0,1)
+[1-u^\eta_n(i,j)] (1- f_{ij}) \log \bar F_{k,i,j}(0,0) \bigg] \ddd \mathbb{P}^\eta_{n,k,h}(f) \\
&= \lim_{\eta \downarrow 0} \lim_{n \to \infty} \frac{1}{{n \choose 2}} 
\sum_{1 \leq i < j \leq n} \bigg[ \tau_t\big(\hat u_k(i,j), \hat h_k(i,j)\big) 
\int_{\mathscr{W}}  f_{ij}\, \ddd \mathbb{P}^\eta_{n,k,h}(f) + u^\eta_n(i,j) \log \bar F_{k,i,j}(0,1)  \\
&\qquad +  [1-u_n^\eta(i,j)] \log \bar F_{k,i,j}(0,0) \bigg] \\
&=\lim_{\eta \downarrow 0} \lim_{n \to \infty} \frac{1}{{n \choose 2}} 
\sum_{1 \leq i < j \leq n} \bigg[ u^\eta_n(i,j)\left\{ \tau_t\big(\hat u_k(i,j), \hat h_k(i,j)\big)\, \pbb[t] \bar F_{k,i,j}(1,1) 
+ \log \bar F_{k,i,j}(0,1) \right\} \\
&\qquad + [1-u^\eta_n(i,j)] \left\{ \tau_t\big(\hat u_k(i,j), \hat h_k(i,j)\big)\, \pab[t] \bar F_{k,i,j}(1,0) 
+ \log \bar F_{k,i,j}(0,0) \right\}  \bigg].
\end{aligned}
\end{equation}
Using that $\hat u_k(x,y)$ is constant on the interior of $B_{i,j}^{(k)}$ for all $1 \leq i,j \leq k$, in combination with the fact that $\lim_{\eta \downarrow 0} \lim_{n \to \infty} d_\square (u^\eta_n, u) = 0$, we obtain that \eqref{Feq1} equals
\begin{equation}
\label{Feq2}
\begin{aligned}
&\int_{[0,1]^2} \ddd x\, \ddd y\, \bigg[ \hat u_k(x,y)\left\{ \tau_t\big(\hat u_k(x,y), 
\hat h_k(x,y)\big)\, \pbb[t] F_{k,x,y}(1,1) + \log F_{k,x,y}(0,1) \right\} \\
& + [1-\hat u_k(x,y)] \left\{ \tau_t\big(\hat u_k(x,y), \hat h_k(x,y)\big)\, \pab[t] F_{k,x,y}(1,0) 
+ \log F_{k,x,y}(0,0) \right\}  \bigg].
\end{aligned} 
\end{equation}
Applying \eqref{HF}, we see that \eqref{Feq2} equals
\begin{equation}
\begin{aligned}
& \int_{[0,1]^2} \ddd x \,\ddd y\, \bigg[ \tau_t(\hat u_k(x,y), \hat h_k(x,y) ) \hat h_k(x,y) + \hat u_k(x,y) \log F_{k,x,y}(0,1) \\
&\qquad  + [1- \hat u_k(x,y)] \log  F_{k,x,y}(0,0)\bigg] = I_{1,t}(\hat u_k, \hat h_k),
\end{aligned} 
\end{equation}
which settles the claim in \eqref{Pclaim} along subsequences $n_\ell$ of the form $\ell k$ with $\ell\in\mathbb{N}$. Straightforward reasoning gives the same along full sequences: the resulting discrepancies correspond to sets of vanishing Lebesgue measure (see the proof of \cite[Proposition 2.6]{C} for a similar argument).
\end{proof}

\medskip\noindent
{\bf 4.}
Two further lemmas are needed.

\begin{lemma}
\label{A4}
For every $t>0$,
\begin{equation}
\lim_{k \to \infty} I_{1,t}(\hat u_k, \hat h_k) = I_{1,t}(u,h).
\end{equation}
\end{lemma}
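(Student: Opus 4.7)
The plan is to obtain the convergence by a dominated convergence / Lebesgue differentiation argument, using the uniform boundedness of $I_{1,t}$ supplied by Lemma~\ref{UnifCont} and the (two-variable) continuity of $(a,b)\mapsto I_{1,t}(a,b)$.

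First I would note that, by the Lebesgue differentiation theorem applied to the nested dyadic-type partitions $\{B_{i,j}^{(k)}\}$, for almost every $(x,y)\in[0,1]^2$ the block averages satisfy
\begin{equation}
\lim_{k\to\infty}\hat u_k(x,y)=u(x,y),\qquad \lim_{k\to\infty}\hat h_k(x,y)=h(x,y).
\end{equation}
By Lemma~\ref{UnifCont}, the map $(a,b)\mapsto I_{1,t}(a,b)$ is uniformly continuous on $[0,1]^2$ (its modulus $\Delta_I(\eta,\varepsilon)$ vanishes as $\eta,\varepsilon\downarrow 0$), and in particular jointly continuous. Therefore
\begin{equation}
\lim_{k\to\infty}I_{1,t}\big(\hat u_k(x,y),\hat h_k(x,y)\big)=I_{1,t}\big(u(x,y),h(x,y)\big)\qquad\text{for a.e. }(x,y).
\end{equation}

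Next I would invoke the uniform bound from Lemma~\ref{UnifCont}, namely that
\begin{equation}
0\leq I_{1,t}(a,b)\leq \max\{-\log\pab[t],-\log(1-\pbb[t])\}<\infty\qquad\forall\,a,b\in[0,1].
\end{equation}
This constant majorant is integrable on $[0,1]^2$, so the dominated convergence theorem yields
\begin{equation}
\lim_{k\to\infty}I_{1,t}(\hat u_k,\hat h_k)
=\lim_{k\to\infty}\int_{[0,1]^2}I_{1,t}\big(\hat u_k(x,y),\hat h_k(x,y)\big)\,\ddd x\,\ddd y
=I_{1,t}(u,h),
\end{equation}
which is the desired claim.

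As a sanity check that is also useful for intuition (and which one could substitute for the DCT argument if preferred), the upper bound $I_{1,t}(\hat u_k,\hat h_k)\leq I_{1,t}(u,h)$ holds for every $k$ by Jensen applied blockwise: equation~\eqref{LSC1} of Lemma~\ref{Convexity}, used on each $B_{i,j}^{(k)}$ and summed, gives exactly this inequality. The matching $\liminf$ bound then follows from the a.e. convergence above together with Fatou's lemma. The only genuinely delicate point is securing the pointwise a.e. convergence of the block averages (the Lebesgue differentiation step); after that, everything reduces to routine measure-theoretic bookkeeping because the integrand is uniformly bounded.
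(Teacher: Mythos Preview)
Your argument is correct. The paper's proof is a one-liner: it invokes the modulus $\Delta_I$ from \eqref{LSC0} together with the $L^2$ convergences $\hat u_k\to u$ and $\hat h_k\to h$ (the implicit mechanism being the splitting already written out in \eqref{LSCEq3}: bound the integrand by $\Delta_I(\varepsilon,\eta)$ on the set where both differences are small, and by the uniform constant on the complement, whose measure vanishes by convergence in measure). You instead obtain a.e.\ convergence of the block averages via the Lebesgue differentiation theorem and then apply dominated convergence with the same uniform bound. The two routes are near-equivalent and rest on the same ingredients from Lemma~\ref{UnifCont}; yours is arguably cleaner once the a.e.\ convergence is granted, while the paper's avoids invoking Lebesgue differentiation by using only the $L^2$ convergence already quoted from \cite{C}. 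Your closing remark (Jensen blockwise for the upper bound plus Fatou for the lower) is also valid and gives a third, equally short proof.
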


\begin{proof}
The claim follows from \eqref{LSC0} and the fact that $\hat u_k \stackrel{L^2}{\to} u$ and $\hat h_k \stackrel{L^2}{\to} h$ as $k \to \infty$.
\end{proof}

\begin{lemma}
\label{LLN}
For fixed $k\in\mathbb{N}$ and $\varepsilon>0$,
\begin{equation}
\lim_{n \to \infty} \mathbb{P}^\eta_{n,k,h}(\mathbb{B}_\square(q^\eta_{n,k} , \varepsilon)) =1.
\end{equation}
\end{lemma}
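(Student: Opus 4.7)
The plan is a law-of-large-numbers for the empirical graphon $h^{H_n}$ under the independent-edge product measure $\mathbb{P}^\eta_{n,k,h}$. Let $Q_n\in\mathscr{W}$ be the $1/n$-block step function with value $q^\eta_k(i,j,n)$ on cell $B^{(n)}_{i,j}$; by construction $Q_n(x,y)=\mathbb{E}^\eta_{n,k,h}[h^{H_n}(x,y)]$ almost everywhere. The triangle inequality reduces the claim to
\begin{equation*}
\lim_{n\to\infty}\mathbb{P}^\eta_{n,k,h}\big(d_\square(h^{H_n},Q_n)>\tfrac{\varepsilon}{2}\big)=0
\quad\text{and}\quad
\lim_{n\to\infty}d_\square(Q_n,q^\eta_{n,k})=0.
\end{equation*}

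For the deterministic second limit I would observe that $u^\eta_n$ is a step graphon on $1/n$-blocks while $\alpha_{k,t},\beta_{k,t}$ are step graphons on $1/k$-blocks, so the function $q^\eta_{n,k}$ defined in \eqref{qkn} agrees with its $1/n$-block average $Q_n$ on every cell that does not straddle a $1/k$-boundary. The exceptional set has Lebesgue measure $O(k/n)$, hence $\|Q_n-q^\eta_{n,k}\|_{L^1}=O(k/n)\to 0$ at fixed $k$, which controls the cut distance.

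The probabilistic part I would handle by a Hoeffding-plus-union-bound argument. Since both $h^{H_n}$ and $Q_n$ are step graphons on the $1/n$-grid, the sup over measurable $S,T\subseteq[0,1]$ in the definition of $d_\square(h^{H_n},Q_n)$ reduces, via a cell-wise sign argument, to a sup over pairs of index sets $I_S,I_T\subseteq[n]$. For each fixed such pair the quantity
\begin{equation*}
\int_{S\times T}\big(h^{H_n}-Q_n\big)\,\ddd x\,\ddd y
=\frac{1}{n^2}\sum_{i\in I_S,\,j\in I_T}\big(1_{\{ij\in E(H_n)\}}-q^\eta_k(i,j,n)\big)
\end{equation*}
is a centred sum of independent $[-1,1]$-valued variables, so Hoeffding's inequality gives a tail bound $2\exp(-c\varepsilon^2 n^2)$ uniformly in $(I_S,I_T)$. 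A union bound over the $4^n$ pairs yields $2\cdot 4^n\exp(-c\varepsilon^2 n^2)\to 0$, since $n\log 4\ll n^2$.

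The main (and essentially only) subtlety is the reduction of the cut-distance supremum to a supremum over unions of $1/n$-intervals, which is automatic for step functions on the $1/n$-grid: for fixed $T$ the map $x\mapsto\int_T(h^{H_n}-Q_n)(x,y)\,\ddd y$ is piecewise constant on $1/n$-intervals, and the optimiser $S$ can therefore be chosen cell-by-cell as the level set $\{x:\,\int_T(h^{H_n}-Q_n)(x,y)\,\ddd y>0\}$. Once this reduction and the $O(k/n)$ estimate of the previous paragraph are in place, the remainder is a standard concentration computation in which the Hoeffding decay $\exp(-c\varepsilon^2 n^2)$ easily absorbs the count $4^n$ of vertex-subset pairs.
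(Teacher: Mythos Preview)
Your argument is correct and is essentially the concentration-plus-union-bound computation that the paper defers to \cite[Lemmas~5.6 and~5.8--5.11]{C}: the referenced lemmas establish exactly that an inhomogeneous product-Bernoulli graph lands in any fixed $d_\square$-ball around its mean graphon with probability $1-\exp(-cn^2)$, via Hoeffding on each rectangle $I_S\times I_T$ and a union bound over the $4^n$ vertex-subset pairs. One cosmetic point: in your displayed sum the terms indexed by $(i,j)$ and $(j,i)$ share the same Bernoulli variable, so they are not literally independent; grouping by unordered edges gives a weighted sum with coefficients in $\{0,1,2\}$, and Hoeffding still yields the $\exp(-c\varepsilon^2 n^2)$ bound with a harmless change of constant.
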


\begin{proof}
The claim follows from the same argument as in the proof of \cite[Lemmas 5.6 and 5.8-- 5.11]{C}.
\end{proof}

\medskip\noindent
{\bf 5.}
We are now ready to prove \eqref{UG}. For fixed $k \in \mathbb{N}$ and $\varepsilon>0$,
\begin{equation}
\begin{aligned}
&\mathbb{P}^\eta_{n,t} ( \mathbb{B}_\square(q^\eta_{n,k}, \varepsilon))\\ 
&= \int_{\mathbb{B}_\square(q^\eta_{n,k},\varepsilon)} \ddd \mathbb{P}^\eta_{n,t} 
= \int_{\mathbb{B}_\square(q^\eta_{n,k},\varepsilon)} \exp \left( -\log \frac{\ddd 
\mathbb{P}^\eta_{n,k,h}}{\ddd \mathbb{P}^\eta_{n,t}}\right) \ddd \mathbb{P}^\eta_{n, k,h} \\
&= \mathbb{P}^\eta_{n,k,h} (\mathbb{B}_\square(q^\eta_{n,k},\varepsilon)) 
\frac{1}{ \mathbb{P}^\eta_{n,k,h}(\mathbb{B}_\square(q^\eta_{n,k}, \varepsilon)) }
\int_{\mathbb{B}_\square(q^\eta_{n,k},\varepsilon)} \exp \left( -\log \frac{\ddd 
\mathbb{P}^\eta_{n,k,h}}{\ddd \mathbb{P}^\eta_{n,t}}\right) \ddd \mathbb{P}^\eta_{n, k, h}.
\end{aligned}
\end{equation}
Therefore, by Jensen's inequality,
\begin{equation}
\begin{aligned}
\log \mathbb{P}^\eta_{n,t} ( \mathbb{B}_\square( q^\eta_{n,k}, \varepsilon))
&\geq \log \mathbb{P}^\eta_{n,k,h}(\mathbb{B}_\square(q_{n,k}, \varepsilon))\\
&\hspace{0.2cm} -\frac{1}{\mathbb{P}^\eta_{n,k,h} (\mathbb{B}_\square(q^\eta_{n,k}, \varepsilon)) }
\int_{ \mathbb{B}_\square(q^\eta_{n,k}, \varepsilon) }  
\left(\log \frac{\ddd \mathbb{P}^\eta_{n,k,h}}{\ddd \mathbb{P}^\eta_{n,t}}\right) 
\,\ddd \mathbb{P}^\eta_{n, k, h}.
\end{aligned}
\end{equation}
By Lemma \ref{LLN}, $\mathbb{P}_{n,k,h}(\mathbb{B}_\square(q_{n,k}, \varepsilon)) \to 1$, which implies that
\begin{equation}
\label{Pprop}
\lim_{\eta \downarrow 0} \liminf_{n \to \infty} \frac{1}{{n \choose 2}} \log 
\mathbb{P}^\eta_{n,t}(\mathbb{B}_\square(q^\eta_{n,k},\varepsilon)) 
\geq - \lim_{\eta \downarrow 0} \lim_{n \to \infty} \frac{1}{{n \choose 2}} \int
\left(\log \frac{\ddd \mathbb{P}^\eta_{n,k,h}}{\ddd \mathbb{P}^\eta_{n,t}}\right)\, \ddd \mathbb{P}^\eta_{n,k,h}.
\end{equation}
According to Lemma \ref{A3}, the right-hand side equals $-I_{1,t}(\hat u_k, \hat h_k)$. Since
\begin{equation}
\begin{array}{lll}
&\lim_{k \to \infty} \lim_{\eta \downarrow 0} \lim_{n \to \infty} d_\square (q^\eta_{n,k},h)=0 &\mbox{(by Lemma \ref{A2})},\\ 
&\lim_{k \to \infty} I_{1,t}(\hat u_k , \hat h_k) = I_{1,t}(u,h) &\mbox{(by Lemma \ref{A4})}, 
\end{array}
\end{equation}
if we let $k \to \infty$, then we obtain from \eqref{Pprop} that 
\begin{equation}
\lim_{\eta \downarrow 0} \liminf_{n \to \infty} \frac{1}{{n \choose 2}} \log \mathbb{P}^\eta_{n,t}(\mathbb{B}_\square(h,\eta)) \geq - I_{1,t}(u,h),
\end{equation}
where we use that $\mathbb{B}_\square(q^\eta_{n,k},\varepsilon) \supseteq \mathbb{B}_\square(h,\eta)$ for $0 < \eta < \varepsilon$ and $n$ large enough.
\qed

\medskip
Collecting the results in Sections \ref{subsec:LSC}--\ref{subsec:LB}, we see that we have completed the proof of Theorem~\ref{tTwoLDP}.


\section{Proofs of the multi-point and sample-path LDPs} 
\label{S4}


\subsection{Proof of the multi-point LDP}
\label{PrLdp} 

The objective of this section to prove Theorem \ref{tMultiLDP} with the help of Theorem \ref{tTwoLDP}. The structure aligns with Section~\ref{S3}: lower semi-continuity, lower bound, upper bound.


\subsubsection{Lower semi-continuity}

For $\tilde h, \tilde g \in \tilde{\mathscr{W}}^{(\ell)}$, let 
\begin{equation}
\delta_\square^{\ell}(\tilde h, \tilde g) := \max_{i \in[ \ell]} \delta_\square(\tilde h_i, \tilde g_i).
\end{equation}
By Lemma \ref{LwrSC}, for any sequence $(\tilde h_n)_{n\in\mathbb{N}}$ in $\tilde{\mathscr{W}}^{|j|+1}$ such that $\lim_{n\to\infty} \delta_\square^{|j|+1} (\tilde h_n, \tilde h) = 0$,
\begin{equation}
\liminf_{n \to \infty} \tilde I_j(\tilde h_n) = \liminf_{n \to \infty} \sum_{i=1}^{|j|} 
\tilde I_{1,t_{i}-t_{i-1}}(\tilde h_{n,i-1}, \tilde h_{n,i}) \geq \sum_{i=1}^{|j|} 
\tilde I_{1,t_{i}-t_{i-1}}(\tilde h_{i-1}, \tilde h_{i})=\tilde I_j(\tilde h),
\end{equation}
which settles the lower semi-continuity of $\tilde I_j$.


\subsubsection{Lower bound}

Let $\tilde h=(\tilde h_i)_{i=0}^{|j|} \in \tilde{\mathscr{W}}^{|j|+1}$ with $\tilde h_0 \equiv \tilde u$, and let $\boldsymbol{\eta}= (\eta_i)_{i=0}^{|j|} \in (0,1)^{|j|+1}$. Define  
\begin{equation}
\tilde{\mathbb{B}}^{|j|}_\square(\tilde h, \boldsymbol{\eta})
= \big\{ \tilde g \in \tilde{\mathscr{W}}^{|j|+1}\colon\, \delta_\square(\tilde h_i,\tilde g_i) 
\leq \eta_i \,\,\quad\forall i \in[ |j|] \big\}.
\end{equation}
For $n$ large enough such that $\tilde u_n \in \tilde{\mathbb{B}}_\square (\tilde h_0, \eta_0)$, the Markov property yields
\begin{equation}
\label{MP1}
\begin{aligned}
&\frac{1}{{n\choose 2}} \log  (\tilde \mu_n \circ p_j^{-1})\big(\tilde{\mathbb{B}}_{\square}^{|j|}(h,\boldsymbol{\eta})\big)\\ 
&\qquad \geq \frac{1}{{n\choose 2}} \sum_{i=1}^{|j|} \inf_{\tilde w \in \tilde{\mathbb{B}}_{\square}(\tilde h_{i-1}, \eta_{i-1}) 
\cap \tilde{\mathscr{W}}_n} \log  \mathbb{P}\Big(\tilde f_{n,t_i} \in \tilde{\mathbb{B}}_\square(\tilde h_i, \eta_i)
~\Big|~ \tilde f_{n,t_{i-1}} = \tilde w\Big).
\end{aligned} 
\end{equation}
Applying Theorem \ref{tTwoLDP}, we obtain
\begin{equation}
\label{As1LSC}
\begin{aligned}
&\lim_{\eta_i \downarrow 0} \lim_{\eta_{i-1} \downarrow 0} \liminf_{n \to \infty} \frac{1}{{n\choose 2}}
\inf_{\tilde w \in \tilde{\mathbb{B}}_{\square}(\tilde h_{i-1}, \eta_{i-1}) \cap \tilde{\mathscr{W}}_n} 
\log \mathbb{P}\Big(\tilde f_{n,t_i} \in \tilde{\mathbb{B}}_\square(\tilde h_i, \eta_i) ~\Big|~ \tilde f_{n,t_{i-1}} = \tilde w\Big)\\
&\qquad \qquad \geq - \lim_{\eta_i \downarrow 0}  \inf_{\tilde x \in \tilde{\mathbb{B}}_\square(\tilde h_i,\eta_i)} 
I_{1,t_i-t_{i-1}}(\tilde h_{i-1}, \tilde x)=-I_{1,t_i-t_{i-1}}(\tilde h_{i-1},\tilde h_i).
\end{aligned}
\end{equation}
Combining \eqref{MP1} and \eqref{As1LSC}, we get 
\begin{equation}
\lim_{\eta_{|j|} \downarrow 0} \cdots \lim_{\eta_0 \downarrow 0} \liminf_{n \to \infty} \frac{1}{{n\choose 2}} 
\log (\tilde \mu_n \circ p_j^{-1})\big(\tilde{\mathbb{B}}^{|j|}_\square(\tilde h, \boldsymbol{\eta})\big) 
\geq -\sum_{i =1}^{|j|+1} I(\tilde h_{i-1}, \tilde h_i) = -\tilde I_j(\tilde h),
\end{equation}
from which the desired lower bound follows.


\subsubsection{Upper bound}

Following similar arguments as above, we obtain
\begin{equation}
\lim_{\eta_{|j|} \downarrow 0} \cdots \lim_{\eta_0 \downarrow 0} \limsup_{n \to \infty} \frac{1}{{n \choose 2}} 
\log (\tilde \mu_n \circ p_j^{-1})\big(\mathbb{B}^{|j|}_\square(h, \boldsymbol{\eta})\big) \leq -I_j(h).
\end{equation}
To achieve this, we need a `local-to-global transference' result. But because $|j|$ is finite and $\tilde{\mathscr{W}}$ is compact, we can use the ideas in \cite[Lemma 4.1]{C}.
\qed


\subsection{Proof of the sample-path LDP}

We are now ready to prove the sample-path LDP in Theorem \ref{GSPLDP}.

\begin{proof}
Consider a path $\tilde h \in \tilde{\mathscr{W}} \times [0,T]$. Applying Theorem \ref{tMultiLDP} and the Dawson-G\"artner projective limit LDP \cite[Theorem 4.6.1]{DZ}, we obtain a sample-path LDP in the pointwise topology (for which we use the label $\mathcal{X}$) with rate ${{n\choose 2}}$ and with rate function 
\begin{equation}
\label{PWspLDP}
\tilde I_{\mathcal{X}}(\tilde h) := \sup_{0=t_0<t_1<\dots < t_k \leq T} \sum_{i = 1}^k \tilde I_{1,t_i-t_{i-1}}(\tilde h_{t_{i-1}}, \tilde h_{t_i} ),
\end{equation}
where $\tilde h \in \tilde{\mathscr{W}} \times [0,T]$.  There are two major challenges we need to overcome to establish Theorem~\ref{GSPLDP}: 
\begin{itemize}
\item[(I)] 
Prove exponential tightness to strengthen the topology (Lemma \ref{LExT} below).
\item[(II)] 
Show that $\tilde I_{\mathcal{X}}(\tilde h) = \tilde I(\tilde h)$ (Lemma \ref{RateEq} below).
\end{itemize}

\medskip\noindent
(I) Recall the definition of $D$ in \eqref{Ddef}. We say that the sequence of probability measures $(\mu_n)_{n \in \mathbb{N}}$ on $D$ is exponentially tight when for every $\alpha < \infty$ there exists a compact set $K_\alpha \subseteq D$ such that 
\begin{equation}
\limsup_{n \to \infty} \frac{1}{{{n\choose 2}}} \log \mu_n(K_{\alpha}^c) < -\alpha.
\end{equation}

\begin{lemma}
\label{LExT}
$(\tilde \mu_n)_{n \in \mathbb{N}}$ is exponentially tight.
\end{lemma}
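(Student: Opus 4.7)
The plan is to exploit that the target space $(\tilde{\mathscr{W}}, \delta_\square)$ is compact, so by the standard characterisation of relative compactness in Skorohod space (e.g.\ Billingsley, Jacod-Shiryaev), it suffices to control the Skorohod modulus $w'(g, \delta)$ uniformly over the candidate compact set. Concretely, for each $\alpha < \infty$, we will build a set $K_\alpha$ on which $w'$ decays at a prescribed rate, and show $\tilde\mu_n(K_\alpha^c)$ decays super-exponentially at rate $\binom{n}{2}$. The probabilistic input is an exponential tail bound on the number of edge switches, inherited from the fact that the edge processes are independent.

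First I would translate cut-distance oscillations into switch counts. For $s \leq t$, let $M_n(s,t)$ be the total number of edge flips of $G_n$ in $[s,t]$; each flipped edge alters $f_{n,\cdot}$ on a set of Lebesgue measure $2/n^2$, so
\begin{equation*}
\delta_\square(\tilde f_{n,s}, \tilde f_{n,t}) \leq d_\square(f_{n,s}, f_{n,t}) \leq \|f_{n,s}-f_{n,t}\|_1 \leq \frac{2 M_n(s,t)}{n^2}.
\end{equation*}
Using the grid partition with spacing $\delta$ in the definition of $w'$, this gives
\begin{equation*}
w'(\tilde f_n, \delta) \leq \max_{i} \frac{2 M_n(i\delta,(i+1)\delta)}{n^2}.
\end{equation*}

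Next I would establish a Chernoff-type bound for the switch counts. Each edge is a two-state Markov chain with total jump rate at most $C:=\lambda+\mu$, so the number of its jumps in an interval of length $\delta$ is stochastically dominated by a $\Pois(C\delta)$ variable (couple to a rate-$C$ Poisson clock and keep only the actual switches). Summing over $\binom{n}{2}$ independent edges, $M_n(s,s+\delta)$ is stochastically dominated by $\Pois(C\delta\binom{n}{2})$. Cramér's theorem for Poisson sums then yields, for $\gamma > C\delta$,
\begin{equation*}
\mathbb{P}\!\left( M_n(s,s+\delta) > \gamma \tbinom{n}{2} \right) \leq \exp\!\left( -\tbinom{n}{2}\, \Lambda_*(\gamma, C\delta) \right), \qquad \Lambda_*(\gamma,c):=\gamma \log\tfrac{\gamma}{c} - \gamma + c,
\end{equation*}
and a union bound over the $\lceil T/\delta \rceil$ grid cells produces
\begin{equation*}
\mathbb{P}\!\left( w'(\tilde f_n, \delta) > 2\gamma \right) \leq \lceil T/\delta\rceil \exp\!\left( -\tbinom{n}{2}\, \Lambda_*(\gamma, C\delta) \right).
\end{equation*}
The crucial feature is that $\Lambda_*(\gamma, C\delta) \to \infty$ as $\delta \downarrow 0$ for any fixed $\gamma>0$, so this exponential rate can be made arbitrarily large.

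Finally I would assemble the tight sets. Given $\alpha$, choose $\gamma_m \downarrow 0$ and $\delta_m \downarrow 0$ fast enough that $\Lambda_*(\gamma_m, C\delta_m) \geq \alpha + m$, and define
\begin{equation*}
K_\alpha := \overline{\bigcap_{m \in \mathbb{N}} \{\, g \in D\colon w'(g, \delta_m) \leq 2\gamma_m \,\}}.
\end{equation*}
Since $\tilde{\mathscr{W}}$ is compact and $\sup_{g \in K_\alpha} w'(g,\delta_m) \leq 2\gamma_m \to 0$, the set $K_\alpha$ is compact in $D$. By the union bound from Step~3,
\begin{equation*}
\tilde\mu_n(K_\alpha^c) \leq \sum_{m \in \mathbb{N}} \lceil T/\delta_m\rceil \exp\!\left( -\tbinom{n}{2}(\alpha+m) \right),
\end{equation*}
whence $\limsup_{n\to\infty} \binom{n}{2}^{-1} \log \tilde\mu_n(K_\alpha^c) \leq -\alpha$, as required. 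The main technical obstacle is the stochastic domination step that reduces the per-edge two-state dynamics to Poisson sums while preserving independence across edges; once this is in place, the rest is routine Skorohod-compactness bookkeeping, with the $\log(T/\delta_m)$ factors being subexponential in $\binom{n}{2}$ and hence harmless.
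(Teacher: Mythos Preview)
Your probabilistic input is sound and is essentially what the paper uses: bound the $\delta_\square$-increment by the number of edge switches, dominate that by an independent sum (the paper takes a Binomial with success probability $1-\eee^{-\max\{\lambda,\mu\}\eta}$, you take a Poisson with rate $(\lambda+\mu)\delta$; both work), and apply a Chernoff bound. This correctly establishes the exponential modulus condition
\[
\lim_{\delta\downarrow 0}\ \limsup_{n\to\infty}\ \binom{n}{2}^{-1}\log\mathbb{P}\big(w'(\tilde f_n,\delta)>\varepsilon\big)=-\infty
\qquad\text{for every }\varepsilon>0.
\]

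The gap is in the final assembly of $K_\alpha$. Requiring $\Lambda_*(\gamma_m,C\delta_m)\geq\alpha+m$ together with $\gamma_m\downarrow 0$ forces $\log(1/\delta_m)\gtrsim(\alpha+m)/\gamma_m$, because $\Lambda_*(\gamma,c)\sim\gamma\log(\gamma/c)$ for small $c$. Hence the prefactors $\lceil T/\delta_m\rceil$ grow super-exponentially in $m$ (for instance like $e^{cm^2}$ when $\gamma_m=1/m$), and the series $\sum_{m}\lceil T/\delta_m\rceil\,\eee^{-\binom{n}{2}(\alpha+m)}$ diverges for every fixed $n$; your concluding inequality does not follow. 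The remark that the $\log(T/\delta_m)$ factors are ``subexponential in $\binom{n}{2}$ and hence harmless'' is correct term-by-term but fails after the infinite sum over $m$, and no choice of $\gamma_m\downarrow 0$ repairs this. The paper avoids the issue by not constructing $K_\alpha$ explicitly: it invokes the Feng--Kurtz criterion \cite[Theorem~4.1]{FK}, which for Markov processes taking values in a compact space reduces exponential tightness to the exponential-moment condition \eqref{TC1}--\eqref{TC2} on short-time increments. Your Poisson/Chernoff estimate verifies exactly that condition, so the cleanest repair is to stop after the modulus bound above and cite either Feng--Kurtz or the standard implication ``compact state space $+$ exponential modulus condition $\Rightarrow$ exponential tightness in $D$'', whose proof handles the bookkeeping by a more delicate argument than a naive union bound.
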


\begin{proof}
By \cite[Theorem 4.1]{FK} (with $\beta=1$ in the notation used in \cite{FK} and $n$ replaced by $\binom{n}{2}$), the compactness of $\tilde{\mathscr{W}}$ and the Markov property, it suffices to show that for each $\eta, \xi> 0$ there exist random variables $\gamma_n(\eta,\xi)$, satisfying
\begin{equation}
\label{TC1}
\mathbb{E}\left[ \eee^{{{n \choose 2}} \xi \delta_\square(\tilde f_{n,\eta'}, \tilde u)} \mid f_{0,n}= u_n  \right]
\leq \mathbb{E}[ \eee^{\gamma_n(\eta, \xi)}], \qquad 0 \leq \eta' \leq \eta, \,u_n \in {\mathscr{W}}_n,
\end{equation}
such that, for each $\xi>0$, 
\begin{equation}
\label{TC2}
\lim_{\eta \downarrow 0} \limsup_{n \to \infty} \frac{1}{{n \choose 2}} \log \mathbb{E}[ \eee^{\gamma_n(\eta, \xi)}]=0.
\end{equation}
To construct $\gamma_n(\eta, \xi)$, let $E$ denote the total number of edges that change (i.e., go from active to inactive or from inactive to active) somewhere in the time interval $[0,\eta]$. Then, given $f_{n,0}=u_n$, we have 
\begin{equation}
\delta_\square(\tilde f_{n,\eta'},\tilde u) \leq \int_{[0,1]^2} \ddd x\, \ddd y\,| f_{n,\eta'}(x,y)-u(x,y) | \leq E\,{n \choose 2}^{-1},
\end{equation}
where the last inequality holds for all $0 \leq \eta' \leq \eta$. Next observe that, because all edges evolve independently, for any $u_n \in \mathscr{W}_n$ the random variable $E$ given $f_{n,0}=u_n$ is stochastically dominated by $Y= {\rm Bin}({n \choose 2}, 1- \eee^{-\max \{ \lambda,\mu \} \eta})$. Thus, if we let $\gamma_n(\eta, \xi) = \xi Y$, then \eqref{TC1} holds and 
\begin{equation}
\frac{1}{{n \choose 2}} \log \mathbb{E}[\eee^{\gamma_n(\eta,\xi)} ] 
= \log\big( \eee^{-\max \{ \lambda,\mu \}\eta} + \eee^\xi  (1- \eee^{-\max \{ \lambda,\mu \}\eta} ) \big) 
\downarrow 0, \qquad \eta \downarrow 0,
\end{equation}
which finishes the proof.
\end{proof}

\medskip\noindent
(II) The following identity holds.

\begin{lemma}
\label{RateEq}
$\tilde I(\tilde h) = \tilde I_{\mathcal{X}}(\tilde h)$ for all $\tilde h \in \tilde{\mathscr{W}} \times [0,T]$.
\end{lemma}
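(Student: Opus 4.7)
The plan is to prove $\tilde I(\tilde h) = \tilde I_\mathcal{X}(\tilde h)$ via two matching inequalities, with the short-time behaviour of the two-point rate function and the dynamic-programming identity of Lemma~\ref{WTpath} as the analytic ingredients.

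The first step is a short-time expansion of $I_{1,\Delta t}$. Taylor-expanding the log-mgf yields $J_{\Delta t,v}(u) = uv + \Delta t\,[\lambda(e^v-1)(1-u) + \mu(e^{-v}-1)u] + O(\Delta t^2)$, and taking the Legendre transform in \eqref{ef} produces $I_{1,\Delta t}(u,\,u+\Delta t\,b+o(\Delta t)) = \Delta t\,\mathcal{L}(u,b) + o(\Delta t)$. Together with Lemma~\ref{WTpath}, which states $I_{1,T}(u,v) = \inf\{I(h)\colon h_0 = u,\, h_T = v\}$, a Riemann-sum argument gives the identification $I(h) = \sup_\pi \sum_{i=1}^{k} I_{1,t_i-t_{i-1}}(h_{t_{i-1}}, h_{t_i})$ for $h \in \mathcal{AC}$ (and $= \infty$ otherwise), with the supremum taken over finite partitions $\pi = \{0 = t_0 < \cdots < t_k = T\}$.

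For the easy direction $\tilde I_\mathcal{X}(\tilde h) \leq \tilde I(\tilde h)$: for any representative $h \sim \tilde h$ and any partition $\pi$, Lemma~\ref{WTpath} gives the subadditivity bound $I_{1,t_i-t_{i-1}}(h_{t_{i-1}}, h_{t_i}) \leq \int_{t_{i-1}}^{t_i} \int_{[0,1]^2} \mathcal{L}(h_s, h'_s)\,\ddd x\,\ddd y\,\ddd s$, while $\tilde I_{1,t_i-t_{i-1}}(\tilde h_{t_{i-1}}, \tilde h_{t_i}) \leq I_{1,t_i-t_{i-1}}(h_{t_{i-1}}, h_{t_i})$ by definition. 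Summing, taking supremum over $\pi$, then infimum over representatives yields the inequality.

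The hard direction $\tilde I(\tilde h) \leq \tilde I_\mathcal{X}(\tilde h)$ is the main obstacle. Assume $M := \tilde I_\mathcal{X}(\tilde h) < \infty$ and let $\pi_n$ be a refining sequence of partitions with mesh tending to zero. I would inductively pick representatives $h^{(n)}_i \sim \tilde h_{t_i^{(n)}}$, starting from a chosen representative of $\tilde u$, that realise the infima in $\tilde I_{1,\Delta t_i^{(n)}}$ up to error $1/(n k_n)$; concatenating the pointwise minimisers $h^*_{h^{(n)}_{i-1} \to h^{(n)}_i}$ from Lemma~\ref{WTpath} on each subinterval produces $h^{(n)} \in \mathcal{AC}$ with $I(h^{(n)}) \leq M + 1/n$. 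Using exponential tightness (Lemma~\ref{LExT}) and the goodness of $I$ (via Fatou on the convex integrand $\mathcal{L}$), a subsequence converges in Skorokhod to some $h^*$, and lower semi-continuity of $I$ gives $I(h^*) \leq M$. The principal difficulty is to verify $h^* \sim \tilde h$ at \emph{every} $t \in [0,T]$ rather than only at the partition points: this should follow from continuity of the pointwise minimiser $(u,v) \mapsto f^*_{u\to v}$ in the cut metric, combined with continuity-in-$t$ of $\tilde h$ (inherited from the strict convexity of $\mathcal{L}$ in its second argument together with $\tilde I_\mathcal{X}(\tilde h) < \infty$), which forces the partition-point values $\tilde h_{t_i^{(n)}}$ to trace out $\tilde h$ uniformly in the cut metric.
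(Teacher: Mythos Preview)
Your easy direction is fine and matches the paper's Step~1. The hard direction, however, has several genuine gaps.

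First, invoking Lemma~\ref{WTpath} here is circular: that lemma is proved in Section~\ref{S5}, and its proof (via Lemma~\ref{SPQOpt}, equation~\eqref{POQ}) already relies on the identification $I_{\mathcal{X}}(f)=I(f)$ for the optimal one-dimensional paths, which is the content you are trying to establish. Second, Lemma~\ref{LExT} concerns exponential tightness of the \emph{probability measures} $\tilde\mu_n$; it says nothing about compactness of a deterministic sequence $h^{(n)}$ and cannot be used to extract a convergent subsequence. Third, and most seriously, your concatenated path $h^{(n)}$ satisfies $h^{(n)}_{t_i}\sim\tilde h_{t_i}$ only at the partition times; for $t\in(t_{i-1},t_i)$ it follows the optimal interpolant $h^*_{h^{(n)}_{i-1}\to h^{(n)}_i}$, which has no reason to lie in the equivalence class $\tilde h_t$. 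Thus $h^{(n)}\not\sim\tilde h$ in the sense of~\eqref{PathEDef}, and nothing forces the limit $h^*$ to be a representative either. Your proposed fix via ``continuity of the pointwise minimiser in the cut metric'' does not close this: the optimal interpolant between representatives of $\tilde h_{t-\delta}$ and $\tilde h_{t+\delta}$ need not pass through the class $\tilde h_t$, and distinct representatives of the same $\tilde h_t$ can be far apart in $\mathscr{W}$, so convergence in $\mathscr{W}$ is not controlled by $\delta_\square$-continuity of $t\mapsto\tilde h_t$.

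The paper sidesteps all of this by working representative-by-representative rather than in the quotient. Granting the identity $\tilde I_{\mathcal X}(\tilde h)=\inf_{h\sim\tilde h}I_{\mathcal X}(h)$ (which the paper asserts as ``$\equiv$'' without proof), it suffices to show $I_{\mathcal X}(h)\ge I(h)$ for each fixed $h$. For $h\in\mathcal{AC}$ this follows \emph{directly} from your short-time expansion of $I_{1,\Delta t}$: take the equispaced partition, Taylor-expand, and pass to the Riemann integral. For $h\notin\mathcal{AC}$ the paper runs a case analysis: if a positive-measure set of coordinates has a jump, then $I_{\mathcal X}(h)=\infty$; isolated ``holes'' can be filled without changing either side. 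No limiting construction of paths, no compactness, and no lower semi-continuity of $I$ in a path topology are needed. Your Taylor expansion is precisely the right analytic ingredient---use it on a fixed representative to bound $I_{\mathcal X}(h)$ from below by the action integral, and then handle the non-$\mathcal{AC}$ case separately, rather than embedding it in a variational construction in the quotient space.
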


\begin{proof} 
Recall the definition of $I(h)$ in \eqref{IDef}, and that $
\mathcal{AC}$ is the set of functions ${h}$ on $ \mathscr{W} \times [0,T]$ such that $t \mapsto h_t(x,y)$
is absolutely continuous for almost all  $(x,y) \in [0,1]^2.$

\medskip\noindent
{\bf 1.}
We first establish that 
\begin{equation}
\tilde I_{\mathcal{X}}(\tilde h) \equiv \inf_{h \sim \tilde h} \sup_{0=t_0<t_1<\dots < t_k \leq T} 
\sum_{i = 1}^k  I_{1,t_i-t_{i-1}}( h_{t_{i-1}}, h_{t_i} ) \leq \tilde I(\tilde h).
\end{equation}
It is enough to show that $I_{\mathcal{X}}(h) \leq I(h)$ for all $h \sim \tilde h$. If $h \notin \mathcal{AC}$, then $I(h)=\infty$ by definition, whereas if $h \in \mathcal{AC}$, then due the convexity of $I_{1,t}$ we obtain $I_{\mathcal{X}}\leq I(h)$ by applying Jensen's inequality.

\medskip\noindent
{\bf 2.} We now establish the reverse inequality. Similarly as above, it is enough to show that $I_{\mathcal{X}}(h) \geq I(h)$ for all $h \sim \tilde h$. 

\medskip\noindent
{\bf a.}
Suppose that $h \in \mathcal{AC}$. Then $h_t'(x,y):= \left. \frac{\partial}{\partial s} h_{s}(x,y) \right|_{s=t}$ exists for almost all $x,y \in [0,1]^2$. Letting $t_i-t_{i-1}=\Delta t$ for all $i$, we have 
\begin{equation}
\begin{aligned}
I_{\mathcal{X}}(h) 
&\geq \sum_{i = 1}^{T / \Delta t} I_{1,\Delta t}(h_{(i-1)\Delta t}, h_{i \Delta t} ) \\
&= \sum_{i = 1}^{T / \Delta t} \int_0^1 \ddd x \int_0^x \ddd y\,\\
&\qquad \sup_{v \in \mathbb{R}} \bigg[v\big( h_{(i-1)\Delta t}(x,y) + h'_{(i-1)\Delta t}(x,y)\Delta t\big)\\ 
&\qquad\qquad\qquad\qquad\qquad\qquad
- h_{(i-1)\Delta t}(x,y) \log \big[\mu \Delta t + \eee^v (1 - \mu \Delta t) \big] \\
&\qquad\qquad\qquad\qquad\qquad\qquad
- [1- h_{(i-1)\Delta t}(x,y)] \log ( 1 - \lambda + \eee^v \lambda \Delta t) + o(\Delta t) \bigg]\\
&= \sum_{i = 1}^{T / \Delta t} \int_0^1 \ddd x \int_0^x \ddd y\,
\sup_{v \in \mathbb{R}} \bigg[ v h'_{(i-1)\Delta t}(x,y) - h_{(i-1)\Delta t}(x,y) \mu(\eee^{-v} -1) \\
&\qquad\qquad\qquad\qquad\qquad\qquad
- [1-h_{(i-1)\Delta t}(x,y)] \lambda (\eee^v-1) +o(1)\bigg] \Delta t \\ 
&\to \int_0^T \ddd t \int_0^1 \ddd x \int_0^x \ddd y\,
\sup_{v \in \mathbb{R}} \bigg[ v h'_{t}(x,y) - h_{t}(x,y) \mu(\eee^{-v} -1) 
- [1-h_{t}(x,y)] \lambda (\eee^v-1) \bigg]\\ 
&=: I(h), \qquad \Delta t \downarrow 0. 
\end{aligned}
\end{equation}
This implies that if $h \in \mathcal{AC}$, then $I_{\mathcal{X}}(h) \geq I(h)$.

\medskip\noindent
{\bf b.}
It is now enough to show that if $h \sim \tilde h$ and $h \notin \mathcal{AC}$, then there exists a $\hat h \sim \tilde h$ such that 
\begin{equation}
I(\hat h)=I_{\mathcal{X}}(h)
\end{equation}
(in many cases we can take $h=\hat h$). The following argument is sketchy, but follows standard reasonings.   

Suppose that $h \notin \mathcal{AC}$. By definition this means that the set of $(x,y) \in[0,1]^2$ for which there exist $\delta_{x,y} >0$ and $\{ s^k_{1,x,y} < t^k_{1,x,y} \leq \dots \leq s^k_{\ell_k,x,y}<t^k_{\ell_k,x,y}\}$ with
\begin{equation}
\lim_{k\to\infty} \sum_{i=1}^{\ell_k} (t^k_{i,x,y}- s^k_{i,x,y})=0, \qquad 
\lim_{k \to \infty}\sum_{i=1}^{\ell_k}|h_{t^k_{{i},x,y}}(x,y)- h_{s^k_{{i},x,y}}(x,y)| \geq \delta_{x,y},
\end{equation}
has positive Lebesgue measure. We distinguish between a number of cases.

\medskip\noindent
$\bullet$ Suppose that there exist $\{ s^k_1 < t^k_1 \leq \dots \leq s^k_{\ell_k}<t^k_{\ell_k}\}$ independent of $(x,y)$ such that the set of $(x,y) \in[0,1]^2$ for which 
\begin{equation}
\label{Coord}
\lim_{k \to\infty} \sum_{i=1}^{\ell_k} (t^k_{i}- s^k_{i}) = 0, \qquad
\lim_{k \to\infty} \sum_{i=1}^{\ell_k}|h_{t^k_{i}}- h_{s^k_{\ell_i}}| \geq \delta \text{ for some } \delta>0,
\end{equation}
has positive Lebesgue measure. Then, following arguments similar to those in the proof of \cite[Lemma 5.1.6]{DZ}, we get $I_{\mathcal{X}}(h)=I(h)=\infty$. 

\medskip\noindent
$\bullet$
Suppose that no sequence satisfying \eqref{Coord} exists. Roughly speaking, the paths $t \mapsto h_t(x,y)$ that are not absolutely continuous fall into two categories: those that contain `steps' and those that contain `holes'. 

\medskip\noindent
(i) We say that a path $t \mapsto h_t(x,y)$ contains a step when there exists a $t \in [0,T]$ such that if $t_{k} \uparrow t$ and $t'_k \downarrow t$, then $h_{t_k}(x,y) \to c$ and $h_{t'_k}(x,y) \to d$ with $c \neq d$.\\
(ii) We say that a path $t \mapsto h_t(x,y) $ contains a hole when there exists a $t \in [0,T]$ such that if $t_k \to t$ (with $t_k \neq t$ for all $k$), then $h_{t_k} \to c \neq h_t$.\\ 
(If the above limits do not exists, then the arguments below can be easily adapted.) 

\medskip\noindent
$\blacktriangleright$ 
Suppose that the set of $(x,y)$ such that $t \mapsto h_t(x,y)$ contains a step of size $\gamma>0$ has positive Lebesgue measure $\beta>0$. Then
\begin{equation}
I_{\mathcal{X}}(h) \geq \sum_{i=1}^{T/\Delta t} I_{1,\Delta t}(h_{(i-1)\Delta t}, h_{i \Delta t}) 
\geq \beta \gamma \log  \frac{\gamma}{1-\eee^{-\max \{ \lambda, \mu\}\Delta t }} \to \infty, 
\qquad \Delta t \downarrow 0.
\end{equation}
where the last inequality follows from a similar reasoning as in the proof of Lemma \ref{LExT}. This implies $I_{\mathcal{X}}(h)=I(h)=\infty$.

\medskip\noindent
$\blacktriangleright$ 
Suppose that the set of $(x,y)$ such that $t \mapsto h_t(x,y)$ contains a hole has positive Lebesgue measure. 

\medskip\noindent
(i) We say that $t \mapsto h_t(x,y)$ has a hole at time $t$ if $h_{t_k}(x,y) \to c \neq h_t(x,y)$ for any $t_k \to t$ (with $t_k \neq t$ for all $k$).\\ 
(ii) We say that $t \mapsto \hat h_t(x,y)$ has this hole filled in if $\hat h_t(x,y)=c$ and $\hat h_t(x,y)=h_t(x,y)$ otherwise. 

\medskip\noindent
Construct $\hat h \in \mathscr{W} \times [0,T]$ from $h$ by filling in all the holes. Since there exists no sequence $\{s^k_{1,x,y} < t^k_{1,x,y} \leq \dots \leq s^k_{\ell_k,x,y}<t^k_{\ell_k,x,y}\}$ satisfying \eqref{Coord}, a positive Lebesgue measure of holes cannot occur simultaneously. Thus, $\hat h_t(x,y) = h_t(x,y)$ almost everywhere, which implies that $I_{\mathcal{X}}(h)= I_{\mathcal{X}}(\hat h)$. In addition, because $\mathscr{W}$ was constructed by taking the quotient with respect to almost sure equivalence, we also have $\hat h \sim \tilde h$. The fact that $I(\hat h)=I_{\mathcal{X}}(\hat h)$ now follows from the arguments above. \end{proof}

Lemmas \ref{LExT}--\ref{RateEq} in combination with Theorem \ref{tMultiLDP} complete the proof of Theorem \ref{GSPLDP}.
\end{proof}


\section{Proofs: Applications}
\label{S5}


\subsection{Application 1} 

The following lemma is the time-varying equivalent of \cite[Theorem 4.1 and Proposition 4.2]{CV} (see also \cite[Theorem 2.7]{LZ}). Let 
\begin{equation}
\label{VPeq}
\phi_T(H,r) := \inf \left\{ \tilde I_{1,T}(u,h) : h \in \mathscr{W}, t(H,h) \geq r \right\},
\end{equation}
let $F^+$ be the set of minimisers, and let $\tilde F^+$ be the image of $F^+$ in $\tilde{\mathscr{W}}$. 

\begin{lemma}
\label{VarProb}
Fix a constant graphon $u$. Let $H$ be a $d$-regular graph for some $d \in \mathbb{N}\setminus\{1\}$. Suppose that $\lim_{n\to\infty} \delta_{\square}(\tilde u_n, \tilde u) = 0$ and $u\,\pbb[T] + (1-u)\pab[T]<r<1$. Then
\begin{equation}
\lim_{n \to \infty} \frac{1}{{n \choose 2}} \log \mathbb{P}\big(t(H,\tilde f_{n,T}) \geq r \mid \tilde f_{n,0} = \tilde u_n\big) 
= \phi_T(H,r).
\end{equation}
Moreover, $\tilde F^+$ is non-empty and compact and, for each $\varepsilon >0$, there exists a positive constant $C(H,r,\lambda, \mu, T, u, \varepsilon)$ such that
\begin{equation}
\mathbb{P}\Big(\delta_\square(\tilde f_{n,T}, \tilde F^+) \geq \varepsilon ~\Big|~ 
t(H,\tilde f) \geq r, \tilde f_{n,0}=\tilde u_n\Big) \leq \eee^{-Cn^2}, \qquad n\in\mathbb{N}.
\end{equation}
\end{lemma}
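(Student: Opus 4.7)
The plan is to derive both statements from Theorem \ref{tTwoLDP} applied to the closed superlevel set $K := \{\tilde h \in \tilde{\mathscr{W}}\colon t(H,\tilde h) \geq r\}$, which is closed because the homomorphism density $t(H,\cdot)$ is continuous in the cut metric \cite[Proposition 3.2]{C}. Since $\tilde{\mathscr{W}}$ is compact and $\tilde h \mapsto \tilde I_{1,T}(\tilde u,\tilde h)$ is lower semi-continuous by Lemma \ref{LwrSC}, the infimum $\phi_T(H,r)$ is attained on $K$, so $\tilde F^+$ is a non-empty closed subset of a compact space and hence compact. The large-deviation upper bound then follows from Theorem \ref{tTwoLDP} applied to the closed set $K$:
\begin{equation}
\limsup_{n \to \infty} \frac{1}{{n \choose 2}} \log \mathbb{P}\big(t(H,\tilde f_{n,T}) \geq r \,\big|\, \tilde f_{n,0} = \tilde u_n\big) \leq -\phi_T(H,r).
\end{equation}

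For the matching lower bound I push a minimiser into the open set $\{t(H,\cdot)>r\}$. Because $u$ is constant, $u^\sigma \equiv u$ for every $\sigma \in \mathscr{M}$, whence $\tilde I_{1,T}(\tilde u,\tilde h) = I_{1,T}(u,h)$ for any representative $h$ of $\tilde h$; in particular $\tilde I_{1,T}(\tilde u,\cdot)$ inherits convexity in $h$ from Lemma \ref{Convexity}. Fix $\tilde h^* \in \tilde F^+$ with representative $h^*$; if $h^* \equiv 1$, then $t(H,h^*)=1>r$ already lies in the interior of $K$, so I may assume $\{h^* < 1\}$ has positive Lebesgue measure. For $\varepsilon \in (0,1)$ set $h_\varepsilon := (1-\varepsilon)h^* + \varepsilon \mathbf{1}$; monotonicity of $t(H,\cdot)$ in the graphon values forces $t(H,h_\varepsilon) > t(H,h^*) \geq r$, while convexity together with the uniform bound in Lemma \ref{UnifCont} gives
\begin{equation}
\tilde I_{1,T}(\tilde u,\tilde h_\varepsilon) \leq (1-\varepsilon)\phi_T(H,r) + \varepsilon\, I_{1,T}(u,\mathbf{1}) \;\longrightarrow\; \phi_T(H,r) \qquad (\varepsilon \downarrow 0).
\end{equation}
Taking an open $\delta_\square$-neighbourhood of $\tilde h_\varepsilon$ small enough to sit inside $\{t(H,\cdot)>r\} \subseteq K$, applying the lower bound of Theorem \ref{tTwoLDP}, and sending $\varepsilon \downarrow 0$ completes the LDP.

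For the concentration statement, set $K_\varepsilon := K \cap \{\tilde h\colon \delta_\square(\tilde h,\tilde F^+) \geq \varepsilon\}$, a closed and hence compact subset of $\tilde{\mathscr{W}}$. The lower semi-continuous functional $\tilde I_{1,T}(\tilde u,\cdot)$ attains its infimum on $K_\varepsilon$ at some $\tilde h^{**} \notin \tilde F^+$, giving $\tilde I_{1,T}(\tilde u,\tilde h^{**}) = \phi_T(H,r) + 2C$ for some constant $C>0$ depending on $H,r,\lambda,\mu,T,u,\varepsilon$. Combining the upper bound of Theorem \ref{tTwoLDP} applied to $K_\varepsilon$ with the LDP asymptotics established above yields
\begin{equation}
\mathbb{P}\big(\delta_\square(\tilde f_{n,T}, \tilde F^+) \geq \varepsilon \,\big|\, t(H,\tilde f_{n,T}) \geq r,\, \tilde f_{n,0} = \tilde u_n\big) \leq \eee^{-\binom{n}{2}(2C - o(1))},
\end{equation}
and after absorbing finitely many initial terms into the constant the bound takes the form $\eee^{-Cn^2}$ for all $n \in \mathbb{N}$. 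I expect the interiorisation step in the lower bound to be the most delicate part: it is the constancy of $u$ that reduces $\tilde I_{1,T}(\tilde u,\cdot)$ to a convex functional of any representative and thereby makes the perturbation $h^* \mapsto (1-\varepsilon)h^* + \varepsilon \mathbf{1}$ lossless; without this convexity the variational infimum on $K$ need not agree with the infimum on its interior.
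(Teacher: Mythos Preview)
Your proof is correct and follows essentially the same route as the paper, which simply defers to the argument of \cite[Theorems~6.1--6.2]{C} after checking that the two-point rate function enjoys the same structural properties (invariance under $\mathscr{M}$ when $u$ is constant, lower semi-continuity, and pointwise uniform continuity). The only cosmetic difference is that you push the minimiser into the interior $\{t(H,\cdot)>r\}$ via convexity of $h\mapsto I_{1,T}(u,h)$ and the bound $I_{1,T}(u,\mathbf 1)<\infty$, whereas the paper (through Chatterjee) would use the uniform continuity of $x\mapsto I_{1,T}(u,x)$ from Lemma~\ref{UnifCont} for the same perturbation $h_\varepsilon=(1-\varepsilon)h^*+\varepsilon\mathbf 1$; both devices yield $I_{1,T}(u,h_\varepsilon)\to\phi_T(H,r)$ and are interchangeable here.
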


\begin{proof}
Observe that $x \mapsto I_{1,T}(u,x)$ is uniformly continuous (by Lemma~\ref{UnifCont}), $h \mapsto I_{1,T}(u, h)$ is constant under measure-preserving bijections when $u$ is a constant graphon, and $h \mapsto I_{1,T}(u,h) =\tilde I_{1,T}(\tilde u, \tilde h)$ is lower semi-continuous (by Lemma \ref{LwrSC}). Therefore the claim follows via the same line of argument used to prove \cite[Theorems 6.1--6.2]{C}, where we apply Theorem \ref{tMultiLDP} in place of \cite[Theorem 5.2]{C}.
\end{proof}

The proof of Theorem \ref{LZlemtv} borrows various elements from that of \cite[Theorem 1.1]{LZ}. 

\medskip\noindent
\textbf{Proof of Theorem \ref{LZlemtv}.} We distinguish between the two cases.

\medskip\noindent 
(i) Suppose that $(r^d, I_{1,T}(u,r))$ lies on the convex minorant of $x \mapsto I_{1,T}(u,x^{1/d})$. Applying  the generalised version of H\"older's inequality derived in \cite{F}, we obtain that, for any $f \in \mathscr{W}$,
\begin{equation}
\label{Tap3}
t(H,f) = \int_{[0,1]^3} \ddd x\, \ddd y\, \ddd z\, f(x,y)f(y,z)f(z,x) 
\leq \left( \int_{[0,1]^2}  \ddd x\, \ddd y\,f^d(x,y) \right)^{e(H)/d} = \Vert f \rVert^{e(H)}_d.
\end{equation}
Abbreviate $\psi(x) := I_{1,T}(u,x^{1/d})$, and let $\hat \psi$ be the convex minorant of $\psi$. Then by Jensen's inequality we have 
\begin{align}
\label{Tap4}
\nonumber
I_{1,T}(u,f) &= \int_{[0,1]^2}\ddd x\, \ddd y\, \psi(f(x,y)^d) 
\geq \int_{[0,1]^2} \ddd x\, \ddd y\,\hat{\psi}(f(x,y)^d) \\
&\geq \hat{\psi} \left( \int_{[0,1]^2} \ddd x\, \ddd y\,f(x,y)^d \right) = \hat \psi( \lVert f \rVert_d^d).
\end{align}
Consequently, by \eqref{Tap3}, if $t(H,f) \geq r^{e(H)}$, then $\lVert f \rVert_d^d \geq r^d$, while, by \eqref{Tap4}, if $\lVert f \rVert_d^d \geq r^d$, then $I_{1,T}(u,f) \geq \hat{\psi}(r^d)=I_{1,T}(u,r)$, where the last equality follows from the condition that $(r^d,I_{1,T}(u,r))$ lies on the convex minorant of $\psi$. Because $\psi$ is strictly increasing on the interval $(u \,p_{1 1,T}+(1-u)\,p_{01,T},1]$ and $\hat \psi$ is not linear in any neighbourhood of $r^d$, equality can occur if and only if $f \equiv r$. Thus, $\tilde F^+ = \{\tilde r \}$. The claim now follows from Lemma \ref{VarProb}.

\medskip\noindent
(ii) Suppose that $(r^d, I_{1,T}(u,r))$ does not lie on the convex minorant of $x \mapsto I_{1,T}(u,x^{1/d})$. Then there necessarily exist $0 \leq r_1 < r < r_2 \leq 1$ such that the point $(r^2, I_{1,T}(u,r))$ lies strictly above the line segment joining $(r_1^d, I_{1,t}(u, r_1))$ and $(r_2^d, I_{1,T}(u, r_2))$. Following the method set out in \cite[Lemma 3.4]{LZ}, we can use this fact to construct a graphon $r_\varepsilon$ with $I_{1,T}(u, r_\varepsilon) < I_{1,T}(u,r)$. (We refer the reader to \cite{LZ} for the specific details of this construction.) Thus, again using the fact that $\psi$ is strictly increasing on the interval $[u \,p_{11,T}+ (1-u)\,p_{0  1,T},1]$, we conclude that $\tilde F^+$ contains no constant graphons. Let $\tilde C \subseteq \mathscr{W}$ be the set of constant graphons. Since $\tilde F^+$ and $\tilde C$ are disjoint and compact, we have $\delta_\square(\tilde F^+,\tilde C)>0$. The result now follows by applying Lemma \ref{VarProb} with $\varepsilon =\tfrac12 \delta_\square(\tilde F^+, \tilde C)$. 
\qed

\medskip\noindent
\textbf{Proof of Proposition \ref{Pts}.}
We recall from the introduction that there is an explicit expression for $I_{1,T}$. Indeed, from \eqref{ee}--\eqref{ef} that 
\begin{equation}
\label{eq:Jopt}
I_{1,T}(u,r) = \sup_{v \in \mathbb{R}} [vr-J_{T,v}(u)]
\end{equation}
with
\begin{equation}
\label{App0}
J_{T,v}(u) = u\log(1-p_{11, T} + \eee^vp_{1 1,T} ) + (1-u) \log (1-p_{01,T} + \eee^vp_{01,T}). 
\end{equation}
Due to the convexity of $v \mapsto vr-J_{T,v}(u)$, we obtain the maximiser in the right-hand side of \eqref{eq:Jopt} by setting the partial derivative with respect to $v$ equal to $0$. This yields
\begin{equation}
0 = r - u \frac{p_{11,T} \eee^v}{1-p_{11,T} + \eee^v p_{11,T}} 
- (1-u) \frac{ \eee^v p_{01,T} }{1-p_{0 1,T} + \eee^v p_{01,T} },
\end{equation}
which implies that
\begin{equation}
\label{App}
\begin{aligned}
0 &=  \eee^{2v} (1-r) p_{11,T} p_{0 1,T} \\
&\quad + \eee^v \big[ (u-r) p_{1  1,T} (1-p_{0  1,T} ) + (1-u-r) p_{0  1,T} (1-p_{1  1, T}) \big] \\
&\quad -r(1-p_{1  1,T})(1-p_{0 1,T}).
\end{aligned}
\end{equation}
By the discussion that followed \eqref{ef} and the fact that $u<r$, for $T$ sufficiently small 
we then obtain $I_{1,T}(u,r)$ by substituting into \eqref{App0}
\begin{equation}
\label{QuadSol}
e^v = \frac{-b+ \sqrt{b^2-4ac}}{2a},
\end{equation}
where $a$, $b$, $c$ can be read from the three lines in \eqref{App}. It is easily verified that if $r>u$ and $T \downarrow 0$, then $a=(1-r) \lambda T + O(T^2)$, $b = u-r + O(T)$, and $c=-r \mu T + O(T^2)$, so that 
\begin{equation} 
\eee^v= \frac{u-r}{(1-r) \lambda T}+ O(1).    
\end{equation} 
Setting $x^{1/d}=r$, we therefore have  
\begin{equation}
I_{1,T}(u,x^{1/d}) = (x^{1/d}-u)\log (1/T) + O(1).
\end{equation}
Because $x \mapsto x^{1/d}$ is concave, for $T$ sufficiently small the point $r^d \mapsto I_{1,T}(u,r)$ cannot lie on the convex minorant of $x \mapsto I_{1,T}(u, x^{1/d})$. The fact that, for such $T$, $G_n(T)$ is SB now follows from Theorem \ref{LZlemtv}.
\qed

\medskip\noindent
\textbf{Proof of Proposition \ref{Pht}.}
If $u=0$, then 
\begin{equation}
I_{1,T}(0,x^{1/2}) = x^{1/2} \log \frac{x^{1/2}}{p_{01,T}} + (1-x^{1/2}) \log \frac{1-x^{1/2}}{1-p_{01, T}}.
\end{equation}
Calling $F(x)$ all terms that do not depend on $T$, we compute
\begin{equation}
\begin{aligned}
I_{1,T}(0,x^{1/2}) 
&= -x^{1/2} \log p_{0 1,T} - (1-x^{1/2}) \log (1- p_{0 1,T}) + F(x), \\
\frac{\partial I_{1,T}(0,x^{1/2})}{\partial x} 
&= -\tfrac{1}{2} x^{-1/2} \log p_{01,T} + \tfrac{1}{2} x^{-1/2}\log(1- p_{0 1,T}) + F'(x), \\
\frac{\partial^2 I_{1,T}(0,x^{1/2})}{\partial x^2} 
&=\tfrac{1}{4} x^{-3/2} \log p_{01,T} - \tfrac{1}{4} x^{-3/2}\log(1- p_{0 1,T}) + F''(x).
\label{eq:2ndder}
\end{aligned}
\end{equation}
The last line \eqref{eq:2ndder} is an increasing function of $\pab[T]$, which itself is an increasing function of $T$ (recall \eqref{ptdefs}). Thus, $r^d \mapsto I_{1,T}(0,r)$ lies on the convex minorant of $x \mapsto I_{1,T}(0, x^{1/d})$, then the same is true for all $T'>T$. Theorem~\ref{LZlemtv} now yields the desired result, i.e., when $G_n(T)$ is SB also $G_n(T')$ is SB. The same argument applies when $u=1$, but in this case $\pbb[T]$ is a decreasing function of $T$.
\qed


\subsection{Application 2} 

$\mbox{}$

\medskip \noindent
\textbf{Proof of Theorem \ref{SPCond}.}
Recall that $\tilde H$ is the set of all paths in $\tilde{\mathscr{W}} \times [0,T]$ that start at $\tilde u$ and end at $\tilde r$. Since $
\tilde H$ is not compact, we first demonstrate that we can restrict our search for elements of $\tilde H^*$ to a compact set. To do this, we note that, by Lemma \ref{WTpath}, for any $u,r$ in the equivalence classes $\tilde u, \tilde r$, 
\begin{equation}
\begin{aligned}
I(h^*_{u \to r}) &= I_{1,T}(u,r) \leq \max \{ I_{1,T}(0,1), I_{1,T}(1,0) \} \\
&= \max \left\{ \log \frac{1}{p_{01,T}}, \log \frac{1}{1-p_{11,T}} \right\} =: K < \infty.
\end{aligned}
\end{equation}
Thus, no paths with rate strictly greater than $K$ can be an element of $\tilde H^*$.
Let 
\begin{equation}
\tilde H^{(K)}_\eta := \left\{ \tilde h \in \tilde{\mathscr{W}}\times [0,T] : \tilde I(\tilde h) 
\leq K, \; \tilde h_0=\tilde u, \; \delta_\square(\tilde h(T), \tilde r) \leq \eta \right\}.
\end{equation}
The fact that $\tilde H_\eta^{(K)}$ is compact follows from a similar line of reasoning as the one used in the proof of Lemma \ref{LExT}. Since $\tilde H^* \subseteq \tilde H^{(K)}_0$, $\tilde H^{(K)}_0$ is compact, and $\tilde I$ is lower semi-continuous, $\tilde I$ must attain its minimum on $\tilde H^{(K)}_0$. Thus, $\tilde H^*$ is non-empty. By the lower semi-continuity of $\tilde I$, $\tilde H^*$ is also closed (and hence compact). 

Fix $\varepsilon >0$ and let 
\begin{equation}
\tilde H^\geq_{\eta,\varepsilon} := \{ \tilde h \in \tilde H^{(K)}_\eta \colon\,
\delta^\infty_\square(\tilde h, \tilde H^*) \geq \varepsilon \}. 
\end{equation}
Then, for the same reasons as above, $\tilde H^\geq_{\eta,\varepsilon}$ is compact for all $\eta,\varepsilon \geq 0$. Define
\begin{equation}
I_1 :=\inf_{\tilde h \in \tilde H} I(\tilde h), \qquad I_2 := \inf_{\tilde h \in \tilde H^\geq_{0,\varepsilon}} I(\tilde h),
\end{equation}
By Theorem \ref{GSPLDP}, we have 
\begin{equation}
\begin{aligned}
\lim_{\eta \downarrow 0} \limsup_{n \to \infty} &\frac{1}{{n \choose 2}} \log
\mathbb{P}^{u_n} \Big( \delta^\infty_\square( \tilde f_n, \tilde H^*) 
\geq \varepsilon ~\Big|~ \tilde f_{n,t} \in \tilde H^{(K)}_{\eta} \Big) \\
&\leq \lim_{\eta \downarrow 0} \left[\inf_{\tilde h \in \tilde H^{(K)}_{\eta} }\tilde I(\tilde h) - \inf_{\tilde h \in \tilde H^\geq_{\eta,\varepsilon}} I(\tilde h) 
\right] = I_1 - I_2.
\end{aligned}
\end{equation}
The proof is complete once we are able show that $I_1 < I_2$. Now, clearly, $I_1 \leq I_2$. If $I_1 =I_2$, then the compactness of $\tilde H^\geq_{\eta,\varepsilon}$ implies that there exists a $\tilde h \in \tilde H^\geq_{\eta,\varepsilon}$ satisfying $\tilde I( \tilde h)=I_2$. However, this means that $\tilde h \in \tilde{H}^*$ and hence $\tilde H^\geq_{0,\varepsilon} \cap \tilde H^* \neq \emptyset$, which is a contradiction.
\qed

\medskip\noindent
\textbf{Proof of Lemma \ref{SPQOpt}.}
We first demonstrate that $f^*_{u \to r} \in [0,1] \times [0,T]$ is in the set of minimisers of $I$. By the contraction principle, it is enough to show that $I(f^*_{u \to r}) =I_{1,T}(u,r)$. Applying Theorem~\ref{tMultiLDP} and the contraction principle, we have 
\begin{equation}
I_{1,T}(u,r) = \min_{s \in [0,1]} \left[ I_{1,t}(u,s) - I_{1,T-t}(s,r) \right] 
= I_{1,t}(u,f^*_{u \to r}(t)) + I_{1,T-t}(f^*_{u \to r}(t),r).
\end{equation}
Consequently, 
\begin{equation}
I_{1,T}(u,r) = \sup_{k\in\mathbb{N}}\sup_{0=t_0 <t_1 \dots < t_k=T} 
\sum_{i=1}^k I_{1,t_{i}-t_{i-1}}\big(f^*_{u \to r}(t_{i-1}),f^*_{u \to r}(t_{i})\big)
= I(f^*_{u \to r}). 
\label{POQ}
\end{equation}
We next prove that $f^*_{u \to r}$ is the unique minimiser of $I(f)$ conditional on $f(0)=u$ and $f(T)=r$. Because $I(f^*_{u \to r})=I_{1,T}(u,r)$, it suffices to establish that, for any $u,r \in [0,1]$, $t <T$ and $s \neq f^*_{u \to r}(t)$,
\begin{equation}
\label{UniPQ}
I_{1,t}(u,s) + I_{1, T-t}(s,r) >I_{1,T}(u,r).
\end{equation}
To establish \eqref{UniPQ}, note that from the definition of $f^*_{u \to r}(t)$ and the fact that both $x \mapsto I_{1,t}(u,x)$ and $x \mapsto I_{1,T-t}(x,r)$ are continuously differentiable we get
\begin{equation}
\left. \frac{\partial I_{1,t}(u, x)}{\partial x} \right|_{x=f^*_{u \to r}(t)} 
= - \left. \frac{\partial I_{1,T-t}(x,r)}{\partial x} \right|_{x=f^*_{u \to r}(t)}.
\end{equation}
Combine this with the fact that, for any $x \in [0,1]$, $u \mapsto I_{1,T}(u,x)$ is convex and $r \mapsto I_{1,T}(x,r)$ is strictly convex (see Lemma \ref{Convexity}), to get
\begin{equation}
\begin{aligned}
I_{1,t}(u,s)+I_{1,T-t}(s,r) >\:& I_{1,t}(u,f^*_{u \to r}(t)) + (s-f_{u \to r}^*(t)) \left. 
\frac{\partial I_{1,t}(u, x)}{\partial x} \right|_{x=f^*_{u \to r}(t)}\\ 
&+I_{1,T-t}(f^*_{u \to r}(t),r) + (s-f_{u \to r}^*(t)) \left. \frac{\partial I_{1,T-t}(x,r)}{\partial x} \right|_{x=f^*_{u \to r}(t)}\\
&= I_{1,t}(u,f^*_{u \to r}(t))  + I_{1,T-t}(f^*_{u \to r}(t),r) = I_{1,T}(u,r),  
\label{PathQuot2}
\end{aligned}
\end{equation}
from which we conclude that indeed $f^*_{u \to r}$ is the unique minimiser of $I(f)$.
\qed

\medskip
The next lemma, whose proof is standard and is omitted, can be used to compute $f^*_{u \to r}$ (see \cite{M} for a related method). Let  $\tau_t(u,r)=v$ be the unique solution of 
\begin{equation}
0= r - u \frac{p_{11,t} e^v}{1-p_{11,t} + \eee^v p_{11,t}} 
- (1-u) \frac{\eee^v p_{01,t} }{1-p_{01,t} + \eee^v p_{01,t} },
\end{equation}
which amounts to solving a quadratic equation (recall \eqref{QuadSol}). 

\begin{lemma}
\label{MM1C}
$f^*_{u \to r}(t)=s$ is the unique solution of
\begin{equation}
\label{fscomp}
\eee^{\tau_t(u,s)}\big(1-p_{01, T-t} + \eee^{\tau_{T-t}(s,r)}p_{01, T-t}\big) = 1- p_{1 1,T-t} + \eee^{\tau(s,r)}p_{11,T-t}.
\end{equation}
\end{lemma}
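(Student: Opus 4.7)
The plan is to derive \eqref{fscomp} as the first-order optimality condition for the one-dimensional minimisation that defines $f^*_{u\to r}(t)$, and then invoke strict convexity to get uniqueness.

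First I would recall that, by Lemma \ref{SPQOpt}, $f^*_{u\to r}(t)$ is characterised as the unique $s\in[0,1]$ minimising $s\mapsto \Phi(s):= I_{1,t}(u,s) + I_{1,T-t}(s,r)$. Since this is an unconstrained smooth problem (one can check that the minimiser lies in the interior when $u,r\in(0,1)$, and the boundary cases are dealt with by continuity), it is enough to work out the first-order condition $\Phi'(s)=0$ and show it coincides with \eqref{fscomp}.

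The main computation is a double application of the envelope theorem to $I_{1,\cdot}(\cdot,\cdot)$, viewed as a Legendre transform of $J_{\cdot,\cdot}(\cdot)$ from \eqref{ee}--\eqref{ef}. Differentiating $I_{1,t}(u,s)=\sup_v[vs-J_{t,v}(u)]$ in its second argument, the envelope theorem gives
\begin{equation}
\frac{\partial I_{1,t}(u,s)}{\partial s} = \tau_t(u,s),
\end{equation}
since the optimiser in $v$ is by definition $\tau_t(u,s)$. For $I_{1,T-t}(s,r)=\sup_v[vr-J_{T-t,v}(s)]$, differentiating in the \emph{first} argument with the envelope theorem yields
\begin{equation}
\frac{\partial I_{1,T-t}(s,r)}{\partial s} = -\left.\frac{\partial J_{T-t,v}(s)}{\partial s}\right|_{v=\tau_{T-t}(s,r)} = -\log\frac{1-p_{11,T-t}+\eee^{\tau_{T-t}(s,r)}p_{11,T-t}}{1-p_{01,T-t}+\eee^{\tau_{T-t}(s,r)}p_{01,T-t}},
\end{equation}
using the explicit form of $J_{T-t,v}(s)$ from \eqref{App0}. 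Setting $\Phi'(s)=0$ and exponentiating gives exactly
\begin{equation}
\eee^{\tau_t(u,s)} = \frac{1-p_{11,T-t}+\eee^{\tau_{T-t}(s,r)}p_{11,T-t}}{1-p_{01,T-t}+\eee^{\tau_{T-t}(s,r)}p_{01,T-t}},
\end{equation}
which after clearing the denominator is identical to \eqref{fscomp}.

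For uniqueness, I would combine Lemma \ref{Convexity} (which gives strict convexity of $s\mapsto I_{1,T-t}(s,r)$ and convexity of $s\mapsto I_{1,t}(u,s)$) with the fact that the sum is therefore strictly convex, so $\Phi'(s)=0$ has at most one solution in $[0,1]$; existence follows because $\Phi$ is continuous on the compact interval $[0,1]$ and the first-order condition is met at the minimiser $f^*_{u\to r}(t)$ supplied by Lemma \ref{SPQOpt}. The only mild obstacle is a routine check that, when $u,r$ lie in the open interval $(0,1)$ (or more generally under \eqref{Assbasic}), the minimiser is interior so that $\Phi$ is smooth and the envelope-theorem computation is valid; the boundary cases can then be handled by continuity in $(u,r)$ of both sides of \eqref{fscomp}.
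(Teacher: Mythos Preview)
Your argument is correct and supplies exactly the ``standard'' proof that the paper omits. The envelope-theorem computations of the two partial derivatives are right, and setting their sum to zero and exponentiating does yield \eqref{fscomp}.

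One small slip that does not affect the conclusion: you cite Lemma~\ref{Convexity} as giving strict convexity of $s\mapsto I_{1,T-t}(s,r)$ and convexity of $s\mapsto I_{1,t}(u,s)$, but the lemma actually says the opposite---the second argument is strictly convex and the first is only convex. So it is $s\mapsto I_{1,t}(u,s)$ that is strictly convex and $s\mapsto I_{1,T-t}(s,r)$ that is convex. Either way the sum $\Phi$ is strictly convex and your uniqueness argument goes through unchanged.
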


\medskip\noindent
\textbf{Proof of Lemma \ref{WTpath}.}
Let 
\begin{equation}
(h^{*}_{u \to r})'(x,y,t)=\frac{\partial h^{*}_{u \to r}(x,y,t)}{\partial t}.
\end{equation}
We have
\begin{equation}
\begin{aligned}
&\int_{0}^T {\rm d}t \int_{[0,1]^2} \ddd x\, \ddd y \,\mathcal{L}\big(h^*_{u \to r}(x,y,t), (h^{*}_{u \to r})'(x,y,t)\big)\\
&=\int_{[0,1]^2} \ddd x\, \ddd y \int_{0}^T {\rm d}t\, \mathcal{L}\big(h^*_{u \to r}(x,y,t), (h^{*}_{u \to r})'(x,y,t)\big) 
= I_{1,T}(u,r), 
\label{WTOp}
\end{aligned}
\end{equation}
where we apply Lemma \ref{SPQOpt} to obtain the last equality. By the contraction principle, $h^*_{u \to r}$ is in the set of minimisers of $I(h)$ subject to the conditions $h_0=u$ and $h_T=r$. To show that $h^*_{u \to r}$ is the unique minimiser, note that if $h_0=u$, $h_T=r$ and $h \neq h^*_{u \to r}$, then, by \eqref{PathQuot} and Lemma \ref{SPQOpt}, there exist $\varepsilon, \beta >0$ such that
\begin{equation}
{\rm Leb} \left\{ (x,y) \in [0,1]^2\colon\, I(h(x,y)) \geq I_{1,T}(u(x,y), r(x,y))+\varepsilon \right\}> \beta.
\end{equation}
Consequently, $I(h) \geq I(h^*_{u \to r}) +  \varepsilon \beta$, which implies that indeed $h^*_{u \to r}$ is unique. 
\qed

\medskip\noindent
\textbf{Proof of Theorem \ref{Ap2Thm}.}
Suppose that $u \in \mathscr{W}^{(I)}$ and $r \in \mathscr{W}^{(J)}$ for some $I,J \in \mathbb{N}$. Let $0=a_0<a_1<\dots < a_I=1$ and $0=b_0<b_1<\dots < b_J=1$ be their block end points, and $A_i:=(a_{i-1}, a_i]$ and $B_i:=(b_{i-1},b_i]$ their block intervals. Recall the function $\alpha\colon\,\mathscr{M} \mapsto [0,1]^{I\times J}$ defined in \eqref{Aldeff} and the (compact) set $V$ defined in \eqref{Vdeff}. For any $\sigma, \phi \in \mathscr{M}$ with $\alpha(\sigma)=\alpha(\phi)$ we have $I_{1,T}(u,r^\phi) = I_{1,T}(u,r^\sigma)$, which implies
\begin{equation}
\label{CMPat}
\inf_{\sigma \in \mathscr{M}} I_{1,t}(u,r^\sigma) = \min_{{\boldsymbol v} \in V} I_{1,T}(u, r^{\sigma_{{\boldsymbol v}}}),
\end{equation}
where $\sigma_{\boldsymbol v}$ is any element of $\mathscr{M}$ with $\alpha(\sigma_{\boldsymbol v})={\boldsymbol v} \in V$. Because $\boldsymbol v \mapsto I_{1,T}(u, r^{\sigma_{\boldsymbol v}})$ is continuous and $V$ is compact, the set of minimisers $V^* \subseteq V$ of \eqref{CMPat} is also compact. Suppose that $\tilde h \in \tilde H^*$. Then, by Lemma \ref{WTpath} and the compactness of $V^*$, there exist ${\boldsymbol v}^* \in V^*$ and a sequence $(h^{[i]})_{i\in\mathbb{N}}$ of representatives of $\tilde h$ such that
\begin{equation}
\label{Hopt} 
\lim_{i \to \infty} I(h^{[i]}) = I_{1,T}(u, r^{\sigma_{{\boldsymbol v}^*}}), \qquad  
\lim_{i \to \infty} \alpha(\sigma^{[i]}) ={\boldsymbol v}^*,
\end{equation}
where $\sigma^{[i]}$ is any element of $\mathscr{M}$ with $h^{[i]}_{T}=r^{\sigma^{[i]}}$. 

Suppose that $\tilde h \notin \tilde F^*$. Due to the compactness of $V$, also $\tilde F^*$ is compact. Thus, there exist $\varepsilon>0$ and $t \in (0,T)$ such that
\begin{equation}
\label{eq:Lebb}
\delta_{\square}\left(\tilde h_t, \tilde h^*_{u \to r^{\sigma_{{\boldsymbol v}^*}}}(\cdot,\cdot,t)\right)\geq \varepsilon.
\end{equation}
By \eqref{Hopt}, 
\begin{equation}
\lim_{i\to \infty}\delta^\infty_\square(\tilde h^*_{h^{[i]}_0\to h^{[i]}_T},\tilde h^*_{u \to r^{\sigma_{\boldsymbol v^*}}}) \to 0,
\end{equation}
which implies that, for any $0<\varepsilon'<\varepsilon$, there exist $i^*$ such that if $i>i^*$ then 
\begin{equation}\label{eq:Lebbalt}
\delta_{\square}\left(\tilde h_t, \tilde h^*_{h^{[i]}_0\to h^{[i]}_T}(\cdot,\cdot,t)\right)\geq \varepsilon'.
\end{equation}
Now, using the fact that the cut distance is bounded above by the $L^1$ distance, for $i>i^*$ we obtain 
\begin{equation}
\label{eq:Leb}
{\rm Leb}\left\{ (x,y) \in [0,1]^2\colon\, \left| h^{[i]}_t(x,y) - f^*_{h^{[i]}_{0}(x,y) 
\to h^{[i]}_{T}(x,y)}(t) \right| > \varepsilon'  \right\} \geq \varepsilon'.
\end{equation} 
Let $u_{ij}$ and $r_{\ell m}$ denote the values of the block constants (i.e., if $(x,y) \in A_i \times A_j$, then $u(x,y)=u_{ij}$, and likewise for $r_{\ell m}$). Let 
\begin{equation}
\begin{aligned}
\kappa_t(\varepsilon) &:= \min_{i,j\in[ I],  \ell,m \in[ J], \, \beta: |\beta| \geq \varepsilon } 
\Big[I_{1,t}\big(u_{ij},f^*_{u_{ij} \to r_{\ell m}}(t)+\beta\big)+ I_{1,T-t}\big(f^*_{u_{ij} \to r_{\ell m}}(t)+\beta, r_{\ell m}\big) \\ 
&\qquad\qquad\qquad\qquad\qquad\qquad\qquad -I_{1,T}( u_{ij}, r_{\ell m}) \Big] > 0,\label{eq:kappa} 
\end{aligned}
\end{equation}
where the last inequality follows from Lemma \ref{SPQOpt}.  By the contraction principle, we know that 
\begin{equation}
I(h^{[i]}) \geq I_{1,t}\big(u, h^{[i]}_t\big) + I_{1,T-t}\big(h^{[i]}_t, h^{[i]}_T\big).
\end{equation}  
Now suppose $i>i^*$ and evaluate the integral 
\begin{equation}
\label{eq:LEB}
\int_{[0,1]^2} \ddd x\,\ddd y\, \left[I_{1,t}\big(u(x,y), h^{[i]}_t(x,y)\big) + I_{1,T-t}\big(h^{[i]}_t(x,y), h^{[i]}_T(x,y)\big)\right]  \end{equation}
by distinguishing two sets: the $(x,y)$ identified in \eqref{eq:Leb} and the rest. By \eqref{eq:Leb}, the first set has Lebesgue measure at least $\varepsilon'$, and by \eqref{eq:kappa} the integrand has value at least 
\begin{equation}
I_{1,T}\big(u(x,y), h^{[i]}_T(x,y)\big)+\kappa_t(\varepsilon'),
\end{equation}
while for the second set, the integrand has value at least 
\begin{equation}
I_{1,T}\big(u(x,y), h^{[i]}_T(x,y)\big).
\end{equation}
Combining the above observations, we find that the integral in \eqref{eq:LEB} satisfies the lower bound
\begin{equation}
I_{1,t}\big(u, h^{[i]}_t\big) + I_{1,T-t}\big(h^{[i]}_t, h^{[i]}_T\big) \geq I_{1,T}(u, r^{\sigma_{\boldsymbol v^*}})
+ \varepsilon' \kappa_t(\varepsilon').
\end{equation}
This is in contradiction with \eqref{Hopt}. Thus, we have established \eqref{HtSFt}. A similar argument yields \eqref{HtSHst}. 
\qed

\medskip\noindent
\textbf{Proof of Proposition \ref{TwoBlocks}.}
First, note that if $a<b$ and $c<d$, then
\begin{equation}
\label{HUC2}
I_{1,T}(a,c) + I_{1,T}(b,d) < I_{1,T}(a,d)+I_{1,T}(b,c).
\end{equation}
Combining \eqref{HUC} and \eqref{HUC2} with the observation that $I_{1,T}(u^\sigma, r^\phi)=I_{1,T}(u, r^{\phi\, \circ \,\sigma^{-1}})$, we see that $r^{\phi \,\circ\, \sigma^{-1}}$ is the unique representative of $\tilde r$ that minimises $I_{1,T}(u, \cdot)$. Equivalently, if $u \in \mathscr{W}^{(I)}$ and $r \in \mathscr{W}^{(J)}$ for some $I,J \in \mathbb{N}$, then $\boldsymbol v^* = \alpha(\phi \circ \sigma^{-1})$ is the unique element of $V$ that minimises $\boldsymbol v \mapsto I_{1,T}(u, r^{\sigma_{\boldsymbol v}})$, and the claim follows directly from Theorem \ref{Ap2Thm}. If not, then the same follows after we consider a sequence of block approximations.
\qed

\medskip\noindent
\textbf{Proof of Proposition \ref{ThreeBlocks}.}
Suppose that $u$, $r$, $\sigma$ are given by \eqref{PathExu}, \eqref{PathExr} and \eqref{PathExs}. Let $\boldsymbol v_1 = \alpha({\tt id})$ and $\boldsymbol v_2 = \alpha(\sigma)$, where ${\tt id} \in \mathscr{M}$ is the identity, i.e., $r=r^{\tt id}$. By Theorem~\ref{Ap2Thm}, the claim has been proven once we we have shown that: (i) $\boldsymbol v_1$ and $\boldsymbol v_2$ are the unique minimisers of $\boldsymbol v \mapsto I_{1,T}(u,r^{\sigma_{\boldsymbol v}})$ (recall that $\sigma_{\boldsymbol v}$ is any $\sigma \in \mathscr{M}$ with $\alpha(\sigma)=\boldsymbol v$); (ii) $\tilde h^*_{u \to r^{\sigma_{\boldsymbol v_1}},T} \neq \tilde h^*_{u \to r^{\sigma_{\boldsymbol v_2}},T}$.

\medskip\noindent
(i) Let $a=c=0$, $b=d=\varepsilon$ and $u_{11}=u_{23}=r_{11}=r_{23}=1$. Suppose $\varepsilon>0$ is small but fixed and $T << \varepsilon$. Following the same arguments as in the proof of Proposition \ref{Pts}, we find that, for $T \downarrow 0$,
\begin{equation}
\label{TSlim}
I_{1,T}(a',b')=|a'-b'| \log(1/T) + O(1), \qquad a',b' \in [0,1].
\end{equation}
For $a',b' \in [0,1]$, we then have 
\begin{align}
\begin{split}
&I_{1,T}(a',a') \sim I_{1,T}(b',b'), \qquad I_{1,T}(0,\varepsilon)\sim I_{1,T}(\varepsilon,0), \\
&I_{1,T}(\varepsilon,1)\sim I_{1,T}(1,\varepsilon), \qquad I_{1,T}(0,1)\sim I_{1,T}(1,0),
\end{split}
\end{align}
and 
\begin{equation}
\label{Eq:terms}
I_{1,T}(a',a') << I_{1,T}(0,\varepsilon) << I_{1,T}(\varepsilon,1) < I_{1,T}(0,1).
\end{equation}
Observe that
\begin{equation}
\label{fEF2}
 I_{1,T}(u, r^{\sigma_{\boldsymbol v_1}}) = I_{1,T}(u, r^{\sigma_{\boldsymbol v_2}}) 
 =  \tfrac{4}{25}I_{1,T}(0,\varepsilon) + \tfrac{4}{25}I_{1,T}(\varepsilon,0) + o(1),
\end{equation}
where here we apply $\lim_{T \to 0}I_{1,T}(a',a')=0$ which reflects the fact that edges are increasingly unlikely to change (i.e., go from active to inactive or vice-versa) between times $0$ and $T$ as $T\to 0$. Note that above expression for $I_{1,T}(u, r^{\sigma_{\boldsymbol v_1}})$ and $I_{1,T}(u, r^{\sigma_{\boldsymbol v_2}})$ contains none of the much larger terms $I_{1,T}(\varepsilon,1)$, $I_{1,T}(1,\varepsilon)$, $I_{1,T}(0,1)$, $I_{1,T}(1,0)$ listed in \eqref{Eq:terms}. Further note that $\boldsymbol v_1$ and $\boldsymbol v_2$ are the only elements of $V$ whose corresponding rate function does not incur any of these much larger terms. Below we use this fact to establish \emph{(i)}. In particular, we first show that if $\boldsymbol v$ is a minimiser of $\boldsymbol v \mapsto I_{1,T}(u, r^{\sigma_{\boldsymbol v}})$, then it must be close to either $\boldsymbol v_1$ or $\boldsymbol v_2$. Afterwards we show that if $\boldsymbol v$ is close to $\boldsymbol v_1$ ($\boldsymbol v_2$), then by moving still closer to $\boldsymbol v_1$ ($\boldsymbol v_2$) we strictly decrease the rate.

Recall that $V^*$ denotes the set of $\boldsymbol v \in V$ that minimise $\boldsymbol v \mapsto I_{1,T}(u,r^{\sigma_{\boldsymbol v}})$. Suppose that $\boldsymbol v \in V^*$. Let $z_1=\tfrac15-v_{11}$. Observing that $v_{12}+v_{13}=z_1$, $v_{22}+v_{32}\geq \frac15$ and $v_{23}+v_{33}\geq \tfrac15$, we obtain
\begin{equation}
\begin{aligned}
I_{1,T}(u,r^{\sigma_{\boldsymbol v}}) 
&> \int_{A_{2}\times A_{3} \cup A_{3}\times A_{2}} \ddd x\, \ddd y\, I_{1,T}\big(u(x,y),r^{\sigma_{\boldsymbol v}}(x,y)\big) 
\geq \tfrac{2 z_1}{5} I_{1,T}(1,\varepsilon),
\end{aligned}
\end{equation}
which, by \eqref{TSlim}, \eqref{fEF2} and the fact that $\boldsymbol v \in V^*$, implies that if $\varepsilon \leq 1/5$ then for $T$ sufficiently small,
\begin{equation}
\label{z1ineq}
z_1 \leq \frac{4 \varepsilon}{5 (1-\varepsilon)} \leq \varepsilon.
\end{equation}
Now suppose that $v_{11}\leq \varepsilon$ and $v_{22}>v_{32}$. Let $z_2= \frac{2}{5}-v_{22}$. Observing that $v_{33} > \frac{1}{5}-\varepsilon$, we have
\begin{equation} 
I_{1,T}\big(u,r^{\sigma_{\boldsymbol v}}\big) > \int_{A_{2}\times A_{3} \cup A_3 \times A_2} \ddd x\, \ddd y\, I_{1,T}\big(u(x,y),r^{\sigma_{\boldsymbol v}}(x,y)\big) \geq 2 z_2 \big(\tfrac15-\tfrac12\varepsilon\big) I_{1,T}(1,\varepsilon),
\end{equation}
which, by the same reasoning as above, implies that if $\varepsilon<1/40$ then for $T$ sufficiently small
\begin{equation}
z_2 \leq \frac{4\varepsilon}{5(1-5\varepsilon)(1-\varepsilon)} \leq \varepsilon.
\end{equation}
Similarly, if $v_{11}\leq \varepsilon$ and $v_{22}>v_{32}$ and $z_3= \frac{2}{5}-v_{33}$, then when $\varepsilon<1/40$ for $T$ sufficiently small $z_3  \leq \varepsilon$. Thus, we have shown that if $\boldsymbol v \in V^*$, $v_{22}>v_{32}$ and $\varepsilon < 1/40$, then 
\begin{equation}
\lVert \boldsymbol v - \boldsymbol v_1 \rVert_\infty \leq \varepsilon
\end{equation}
for $T$ sufficiently small. The same arguments can be used to show that if $\boldsymbol v \in V^*$, $v_{22}<v_{32}$, and $\varepsilon < 1/40$, then $\lVert \boldsymbol v - \boldsymbol v_2 \rVert_\infty \leq \varepsilon$ for $T$ sufficiently small. It can also be easily shown that if $v_{22}=v_{32}$, then $\boldsymbol v$ cannot be a minimiser of $\boldsymbol v \mapsto I_{1,T}(u,r^{\sigma_{\boldsymbol v}})$. Thus, we have shown that $\boldsymbol v$ must be close to either $\boldsymbol v_1$ or $\boldsymbol v_2$.

Next suppose that $\lVert \boldsymbol v - \boldsymbol v_1 \rVert_\infty \leq \varepsilon$. We show that by moving still closer to $\boldsymbol v_1$ we strictly decrease the value of the associated rate. To that end, first suppose that $v_{11} < \tfrac15$.  Then, because $v_{11}+v_{12}+v_{13}=\tfrac15$, either $v_{12}>0$ or $v_{13}>0$. In addition, because $v_{11}+v_{21}+v_{31}=\tfrac15$, either $v_{21}>0$ or $v_{31}>0$. Suppose, for instance, that $v_{12}>0$ and $v_{31}>0$, and let $\eta \leq \min \{ v_{12},v_{31}\}$. Define $\boldsymbol v'$ as
\begin{equation}
v'_{11}=v_{11}+\eta, \qquad v'_{32}=v_{32}+\eta, \qquad v'_{12}=v_{12}-\eta, \qquad v'_{31}=v_{31}-\eta,
\end{equation}
and $v'_{ij} = v_{ij}$ otherwise. Let 
\begin{equation}
D(\boldsymbol v, \boldsymbol v', x,y) := I_{1,T}\big(u(x,y), r^{\sigma_{\boldsymbol v}}(x,y)\big)
-I_{1,T}\big(u(x,y), r^{\sigma_{\boldsymbol v'}}(x,y)\big).
\end{equation}
From elementary considerations, we obtain
\begin{equation}
\begin{aligned}
\int_{A_1^2} \ddd x\,\ddd y\, D(\boldsymbol v, \boldsymbol v', x,y) 
&= \tfrac{2\eta \log(1/T) }{5} + O(\eta + \eta \varepsilon \log(1/T)), \\
\int_{A_1 \times A_2 \cup A_2 \times A_1} \ddd x\,\ddd y\, D(\boldsymbol v, \boldsymbol v', x,y) 
&= \tfrac{4\eta \log(1/T) }{5} +  O(\eta + \eta \varepsilon \log(1/T)), \\
\int_{A_1 \times A_3 \cup A_3 \times A_1} \ddd x\,\ddd y\, D(\boldsymbol v, \boldsymbol v', x,y) 
&= O(\eta + \eta \varepsilon \log(1/T)),\\
\int_{A_2^2} \ddd x\,\ddd y\, D(\boldsymbol v, \boldsymbol v', x,y) 
&=-\tfrac{4\eta \log(1/T) }{5} +  O(\eta + \eta \varepsilon \log(1/T)),\\
\int_{A_2 \times A_3 \cup A_3 \times A_2} \ddd x\,\ddd y\, D(\boldsymbol v, \boldsymbol v', x,y) 
&=O(\eta + \eta \varepsilon \log(1/T)),\\
\int_{A_3^2} \ddd x\,\ddd y\, D(\boldsymbol v, \boldsymbol v', x,y) 
&=0.
\end{aligned}
\end{equation}
Consequently, it is possible to choose $\varepsilon$ and $T$ sufficiently small to ensure that $I_{1,T}(u,r^{\sigma_{\boldsymbol{v}'}})<I_{1,T}(u,r^{\sigma_{\boldsymbol{v}}})$. Moreover, it is possible to select $\varepsilon$ and $T$ such that this inequality holds for any $\boldsymbol v \in V$ such that $\lVert \boldsymbol v - \boldsymbol v_1 \rVert_\infty \leq \varepsilon$, $v_{11}<1/5$, $v_{12}>0$, and $v_{31}>0$. Following the same arguments, we see that the same is true when $v_{11}<1/5$, $v_{12}>0$, $v_{21}>0$, $\eta \leq \min\{v_{12}, v_{21}\}$ and $\boldsymbol {v}'$ is defined as 
\begin{equation}
v'_{11}=v_{11}+\eta, \qquad v'_{22}=v_{22}+\eta, \qquad v'_{12}=v_{12}-\eta, \qquad v'_{21}=v_{21}-\eta
\end{equation}
or when $v_{11}<1/5$, $v_{13}>0$, $v_{31}>0$, $\eta \leq \min\{v_{13}, v_{31} \}$ and $\boldsymbol{v}'$ is defined as
\begin{equation}
v'_{11}=v_{11}+\eta, \qquad v'_{33}=v_{33}+\eta, \qquad v'_{13}=v_{13}-\eta, \qquad v'_{31}=v_{31}-\eta
\end{equation}
or when $v_{11}<1/5$, $v_{13}>0$, $v_{21}>0$, $\eta \leq \min\{v_{13}, v_{21} \}$, and $\boldsymbol {v}'$ is defined as 
\begin{equation}
v'_{11}=v_{11}+\eta, \qquad v'_{23}=v_{23}+\eta, \qquad v'_{13}=v_{13}-\eta, \qquad v'_{31}=v_{31}-\eta.
\end{equation}
Note that for each of these cases, after replacing $\boldsymbol v$ by $\boldsymbol v'$ we decrease $z_1=\tfrac15-v_{11}$ and strictly decrease the associated rate. We can hence conclude that if $\varepsilon$ and $T$ are sufficiently small, $\lVert \boldsymbol v - \boldsymbol v_1 \rVert_\infty \leq \varepsilon$ and $\boldsymbol v \in V^*$, then $v_{11}=\tfrac15$. Using similar arguments, we can verify that if $\lVert \boldsymbol v - \boldsymbol v_1 \rVert_\infty \leq \varepsilon$, $v_{11}=\tfrac15$ and $v_{22}<\tfrac25$, then $I_{1,T}(u,r^{\sigma_{\boldsymbol v_2}})<I_{1,T}(u,r^{\sigma_{\boldsymbol v}})$ for $\varepsilon$ and $T$ sufficiently small. Thus, if $\boldsymbol v \in V^*$ and $\lVert \boldsymbol v - \boldsymbol v_1 \rVert_\infty \leq \varepsilon$, then $\boldsymbol v=\boldsymbol v_1$ when $\varepsilon$ and $T$ are sufficiently small. Repeating these arguments when $\lVert \boldsymbol v - \boldsymbol v_2 \rVert_\infty \leq \varepsilon$, we have established (i).

\medskip\noindent
(ii) The proof focusses on the diagonal blocks $A_1^2$, $A_2^2$, $A_3^2$. We have
\begin{equation}
h^*_{u \to r^{\sigma_{\boldsymbol v_1}}}(x,y,t) = 
\begin{cases}
f^*_{1 \to 1}(t), \qquad &(x,y) \in A_1^2, \\
f^*_{0 \to 0}(t), \qquad &(x,y) \in A_2^2, \\
f^*_{\varepsilon \to \varepsilon}(t), \qquad &(x,y) \in A_3^2,
\end{cases}
\end{equation}
and
\begin{equation}
h^*_{u \to r^{\sigma_{\boldsymbol v_2}}}(x,y,t) = 
\begin{cases}
f^*_{1 \to 1}(t), \qquad &(x,y) \in A_1^2, \\
f^*_{0 \to \varepsilon}(t), \qquad &(x,y) \in A_2^2, \\
f^*_{\varepsilon \to 0}(t), \qquad &(x,y) \in A_3^2.
\end{cases}
\end{equation}
Observe that, for almost all values of $t \in (0,T)$, $f^*_{0 \to 0}(t)$ is different from $f^*_{1 \to 1}(t)$, $f^*_{0 \to \varepsilon}(t)$, and $f^*_{\varepsilon \to 0}(t)$. Fix one such value of $t$, and let 
\begin{equation}
 C^*:= \min \Big\{ | f^*_{0 \to 0}(t) - f^*_{1 \to 1}(t)|, \, |f^*_{0 \to 0}(t) - f^*_{0 \to \varepsilon}(t)|, 
 \, | f^*_{0 \to 0}(t) - f^*_{\varepsilon \to 0}(t) | \Big\}>0.
\end{equation}
For $\phi \in \mathscr{M}$ and $i=1,2,3$, let $L^\phi_i=\{ \phi(x) \in A_2\colon\,x \in A_i \}$. Since $u,r \in \mathscr{W}^{(3)}$ and ${\rm Leb}(A_2)=\tfrac25$, for any $\phi \in \mathscr{M}$ there exists an $i$ such that ${\rm Leb}(L_i^\phi)\geq \tfrac{2}{15}$. Consequently, 
\begin{equation}
\begin{aligned}
&d_\square\Big(h^*_{u \to r^{\sigma_{\boldsymbol v_1}}}(\cdot,\cdot,t), 
\big(h^*_{u \to r^{\sigma_{\boldsymbol v_2}}}(\cdot,\cdot,t)\big)^\phi\Big)\\
&\qquad \geq \left| 
\int_{L_i^\phi \times L_i^\phi} \ddd x\,\ddd y\, \left[ h^*_{u \to r^{\sigma_{\boldsymbol v_1}}}(x,y,t) 
-(h^*_{u \to r^{\sigma_{\boldsymbol v_2}}})^\phi(x,y,t) \right] \right| \geq C^* \left(\tfrac{2}{15}\right)^2.
\end{aligned}
\end{equation}
Since this bound is uniform in $\phi$, we have $\delta_\square(\tilde h^*_{u \to r^{\sigma_{\boldsymbol v_1}}}(\cdot,\cdot,t), \tilde h^*_{u \to r^{\sigma_{\boldsymbol v_2}}}(\cdot,\cdot,t)) >0$. 
\qed


{\small 

}

\end{document}